\theoremstyle{plain}
\newtheorem{theorem}{Theorem}
\newtheorem{lemma}{Lemma}
\newtheorem{corollary}{Corollary}
\theoremstyle{definition}
\def\NN{\mathbb{N}}
\def\RR{\mathbb{R}}
\def\calA{\mathcal{A}}
\def\calC{\mathcal{C}}
\def\calM{\mathcal{M}}
\def\calN{\mathcal{N}}
\def\calQ{\mathcal{Q}}
\def\calX{\mathcal{X}}
\def\bE{\mathbf{E}}
\def\bP{\mathbf{P}}
\def\bR{\mathbf{R}}
\def\1{\mathbbm{1}}
\def\var{\mathsf{Var}}
\theoremstyle{plain}
\newtheorem{definition}{Definition}
\def \bP {\mathbb{P}}
\def \bE {\mathbb{E}}
\def \bR {\mathbb{R}}
\def \var {\mathsf{Var}}
\newcommand{\stepa}[1]{\overset{\rm (a)}{#1}}
\newcommand{\stepb}[1]{\overset{\rm (b)}{#1}}
\newcommand{\stepc}[1]{\overset{\rm (c)}{#1}}
\newcommand{\stepd}[1]{\overset{\rm (d)}{#1}}
\newcommand{\stepe}[1]{\overset{\rm (e)}{#1}}
\newcommand{\naturals}{\mathbb{N}}
\newcommand{\Poi}{\mathsf{Poi}}
\definecolor{myblue}{rgb}{.8, .8, 1}
\definecolor{mathblue}{rgb}{0.2472, 0.24, 0.6} % mathematica's Color[1, 1--3]
\definecolor{mathred}{rgb}{0.6, 0.24, 0.442893}
\definecolor{mathyellow}{rgb}{0.6, 0.547014, 0.24}
\begin{document}
%\begin{frontmatter}

% "Title of the paper"
\title{On the Competitive Analysis and High Accuracy Optimality of Profile Maximum Likelihood}
\author{Yanjun Han and Kirankumar Shiragur\thanks{Yanjun Han is with the Department of Electrical Engineering, Stanford University, email: \url{yjhan@stanford.edu}. Kirankumar Shiragur is with the Department of Management Science \& Engineering, Stanford University, email: \url{shiragur@stanford.edu}. Kirankumar Shiragur was supported by Stanford Data Science Scholarship.} }

\maketitle
%\thankstext{T1}{Footnote to the title with the ``thankstext'' command.}

\begin{abstract}
A striking result of Acharya et al. \cite{acharya2017unified} showed that to estimate symmetric properties of discrete distributions, plugging in the distribution that maximizes the likelihood of observed multiset of frequencies, also known as the profile maximum likelihood (PML) distribution, is competitive compared with any estimators regardless of the symmetric property. Specifically, given $n$ observations from the discrete distribution, if some estimator incurs an error $\varepsilon$ with probability at most $\delta$, then plugging in the PML distribution incurs an error $2\varepsilon$ with probability at most $\delta\cdot \exp(3\sqrt{n})$. In this paper, we strengthen the above result and show that using a careful chaining argument, the error probability can be reduced to $\delta^{1-c}\cdot \exp(c'n^{1/3+c})$ for arbitrarily small constants $c>0$ and some constant $c'>0$. The improved competitive analysis leads to the optimality of the PML plug-in approach for estimating various symmetric properties within higher accuracy $\varepsilon\gg n^{-1/3}$. In particular, we show that the PML distribution is an optimal estimator of the sorted distribution: it is $\varepsilon$-close in sorted $\ell_1$ distance to the true distribution with support size $k$ for any $n=\Omega(k/(\varepsilon^2 \log k))$ and $\varepsilon \gg n^{-1/3}$, which are the information-theoretically optimal sample complexity and the largest error regime where the classical empirical distribution is sub-optimal, respectively. 

In order to strengthen the analysis of the PML, a key ingredient is to employ novel ``continuity" properties of the PML distributions and construct a chain of suitable quantized PMLs, or ``coverings". We also construct a novel approximation-based estimator for the sorted distribution with a near-optimal concentration property without any sample splitting, where as a byproduct we obtain better trade-offs between the polynomial approximation error and the maximum magnitude of coefficients in the Poisson approximation of $1$-Lipschitz functions. 
\end{abstract}
\newpage
\tableofcontents
%\end{frontmatter}
\newpage
%!TEX root = main.tex
\section{Introduction}
Symmetric property estimation is a fundamental problem in computer science and statistics with  applications in neuroscience \cite{RWDB99}, physics \cite{VBBVP12}, ecology \cite{Chao84, Chao92, BF93, CCGLMCL12}, and beyond \cite{PW96, ASNRMAS01}. Over the past decade the optimal sample complexity for estimating various symmetric properties has been studied and resolved, including entropy \cite{Paninski2003,Paninski2004,Valiant--Valiant2011,Jiao--Venkat--Han--Weissman2015minimax,wu2016minimax}, support size \cite{Valiant--Valiant2013estimating, wu2019chebyshev}, support coverage \cite{orlitsky2016optimal,ZVVKCSLSDM16}, distance to uniformity \cite{Valiant--Valiant2011power,jiao2018minimax}, sorted $\ell_{1}$ distance \cite{Valiant--Valiant2011power, han2018local}, R\'enyi entropy~\cite{AOST14, AOST17}, and many others. A natural question is to design a universal estimator, an estimator that is sample competitive with respect to all symmetric properties.

Acharya et al. \cite{acharya2017unified} showed that computing the profile maximum likelihood (PML), i.e. the distribution that maximizes the likelihood of observed multiset of frequencies, and returning the property value of this distribution attains optimal rates of sample complexity for estimating various symmetric properties. Formally, Acharya et al. showed that given $n$ i.i.d. samples from an unknown distribution $p$, if there exists an estimator for estimating a symmetric property $f$ achieving error $\varepsilon$ with success probability $1-\delta$, then this PML-based plug-in approach achieves error $2\varepsilon$ with probability $1-\delta e^{3\sqrt{n}}$. This result further implied an unified approach for estimating symmetric properties such as support size, support coverage, entropy, and distance to uniformity when the estimation error $\varepsilon \gg n^{-1/4}$, as the error probability $\delta$ for estimating these properties behaves as a Gaussian tail $\exp(-\Omega(n\varepsilon^2))$. The authors in \cite{acharya2017unified} further argued that a $\beta$-approximate PML\footnote{A $\beta$-approximate PML is the distribution that achieves a multiplicative $\beta$-approximation to the PML objective.} for $\beta > \exp(-n^{1-c})$ and any constant $c>0$ suffices to get a unified approach that is sample optimal for $\varepsilon \gg n^{- \min(1/4,c/2)}$.

Acharya et al. \cite{acharya2017unified} posed two important open questions. In the first question they asked for a polynomial time algorithm to compute an $\exp(-n^{1-c})$-approximate PML distribution for some constant $c>0$. This open question was resolved recently in \cite{charikar2019efficient}, where the authors present the first polynomial time algorithm to compute an $\exp(-n^{2/3} \log n)$-approximate PML distribution; the approximation factor was later improved to $\exp(-\sqrt{n} \log n)$ by \cite{ACSS20}. In their second open question, Acharya et al. suggested that the requirement $\varepsilon \gg n^{-1/4}$, or the amplified failure probability $\delta e^{3\sqrt{n}}$, may just be an artifact of the analysis and asked for a better analysis of PML. Moreover in most of the previous works, the PML distribution is mainly studied in the context of symmetric property estimation and its fundamental understanding as a \emph{probability distribution} remains largely unknown with the notable exceptions \cite{hao2019broad,hao2020profile}. 

In this work we address the second open question. We show that the previous penalty term $\exp(3\sqrt{n})$, which upper bounds the cardinality of all possible PML distributions, is pessimistic and does not account for the internal relationships between different PML distributions. Instead, we propose a novel continuity property of the PML distribution and show that pooling together similar PML distributions helps significantly reduce the effective cardinality term. Specifically, by a careful chaining argument, the error probability of the PML-based plug-in approach can be reduced from $\delta\exp(3\sqrt{n})$ to $\delta^{1-c}\exp(c'n^{1/3+c})$ for arbitrarily small constant $c>0$ with a large constant $c'>0$\footnote{As $\delta$ is typically at most $\exp(-n^{t})$ for some absolute constant $t>0$, the exponent of the quantity $\delta^{1-c}$ has the same order of magnitude as that of $\delta$, so we do not lose much from $\delta$ to $\delta^{1-c}$.}. Consequently, for all the aforementioned symmetric properties, the PML-based approach attains the optimal sample complexity for any estimation error $\varepsilon \gg n^{-1/3}$. 

The improvement from $n^{-1/4}$ to $n^{-1/3}$, albeit seemingly tiny, in fact has striking consequences. First, it helps us establish a complete analogy between the PML distribution and the classical maximum likelihood (ML) distribution (i.e. the empirical distribution), where just like the ML distribution is an optimal estimator of the hidden distribution, the PML distribution is an optimal estimator of the \emph{sorted} hidden distribution. Specifically, based on $n$ i.i.d. observations from the distribution $p$ with support size $k$, the ML distribution $\widehat{p}^{\text{ML}}$ satisfies,
\begin{align*}
\bP\left(\|\widehat{p}^{\text{ML}} - p\|_1 \ge c\sqrt{\frac{k}{n}} + \varepsilon \right) \le \exp(-c'n\varepsilon^2)
\end{align*}
for some constants $c,c'>0$ and any $\varepsilon>0$, which is rate-optimal in estimating the distribution $p$ under the $\ell_1$ (or equivalently, the total variation) distance \cite{han2015minimax,kamath2015learning}. Similarly, we show that the PML distribution $\widehat{p}^{\text{PML}}$ satisfies
\begin{align}\label{eq:PML_sorted}
\bP\left(\|\widehat{p}^{\text{PML}} - p\|_{1, \text{sorted}} \ge c\sqrt{\frac{k}{n\log n}} + \varepsilon \right) \le \exp(-c'n^{1-\delta}\varepsilon^2)
\end{align}
for $\varepsilon\gg n^{-1/3}$ and any small constant $\delta>0$, which is also known to be rate-optimal among all possible estimators in estimating the distribution $p$ under the sorted $\ell_1$ distance based on $n$ samples\cite{han2018local}. Compared with the unsorted distribution and ML, \eqref{eq:PML_sorted} shows that estimating the sorted distribution is strictly easier with a logarithmic improvement on the sample complexity. Moreover, after reaching the sample complexity, \eqref{eq:PML_sorted} also gives a near sub-Gaussian concentration property of the PML for sorted distribution estimation, just like ML for unsorted distribution estimation. We remark that the additional assumption $\varepsilon\gg n^{-1/3}$ in \eqref{eq:PML_sorted} is necessary as well: it is known in \cite{han2018local} that the ML distribution becomes optimal in estimating the sorted distribution when $\varepsilon \le n^{-1/3}$, and a phase transition on the minimax rate occurs at $\varepsilon\asymp n^{-1/3}$. Hence, the condition $\varepsilon\gg n^{-1/3}$ is the information-theoretically optimal range one may possibly hope for an improvement over the ML distribution. The above comparison shows that estimating the unsorted distribution is fundamentally \emph{different} from the sorted distribution, and the PML distribution is an optimal estimator for the latter task. We note that a similar analogy (i.e. the same form of \eqref{eq:PML_sorted}) was also shown in previous work \cite{hao2019broad,hao2020profile} when $\varepsilon\gg n^{-1/8}$, which applied the competitive result of \cite{acharya2017unified} and an indirect argument factoring through the optimal estimation of general $1$-Lipschitz functionals. In contrast, our work improves upon the fundamental PML analysis in \cite{acharya2017unified} and directly proposes a new estimator for the sorted distribution with low sensitivity. We provide more comparisons in the related work. 

Second, we establish an analogy between the PML-based plug-in approach and the local moment matching (LMM) approach proposed in \cite{han2018local}, both of which are \emph{unified} approaches in the sense that they simply plug in some distribution independent of the specific symmetric property in hand; the former approach simply plugs the PML distribution into the target property, and the latter plugs in the LMM distribution. Albeit constructed based on different principles, both approaches are minimax rate-optimal in estimating the sorted distribution, and they both attain the optimal sample complexity for various symmetric properties above the common error threshold $\varepsilon\gg n^{-1/3}$. We conjecture that this common threshold may be the price one need to pay for any unified methodology of symmetric property estimation without knowing the specific property beforehand, which will also imply that the exponent of the amplified error probability $\delta^{1-c}\exp(c'n^{1/3+c})$ of PML cannot be improved to $O(n^{1/3-c})$ in general and thus the tightness of our general PML analysis.\footnote{This conjecture was recently confirmed to be true in the follow-up work \cite{han2020high}.}

To apply the competitive result, a key intermediate step in establishing the optimality of the PML distribution in estimating the sorted distribution is to find an estimator which along with other properties also satisfies an exponentially small failure probability as in \eqref{eq:PML_sorted}. However, the only known minimax rate-optimal estimator for the sorted distribution, i.e. the LMM estimator in \cite{han2018local}, has an inverse polynomial failure probability like $O(n^{-5})$ which is not sufficient for our purposes, as the sample splitting technique and solving a linear program in each local interval makes it unstable to perturbations. In our work, we propose a new estimator of the sorted distribution that has optimal sample complexity and is the first estimator with sorted $\ell_1$ loss enjoying near-optimal concentration properties (therefore an exponentially small failure probability). The crux of our estimator construction is to abandon sample splitting, a technique to acquire independence commonly used in almost all past works in estimating symmetric properties, and to construct a global and explicit linear program, both in order to increase the stability of the resulting estimator to slight perturbations. In particular, the removal of sample splitting enables us to obtain better trade-offs between the polynomial approximation error and the maximum magnitude of coefficients in the Poisson approximation of $1$-Lipschitz functions, a side result in approximation theory which also improves the key technical lemma in \cite{vinayak2019maximum,vinayak2019optimal}.

In summary, the main contributions of this paper are as follows: 
\begin{enumerate}
	\item We provide a first improvement of the competitive analysis of PML in \cite{acharya2017unified}, where we reduce the amplification factor on the error probability from $\exp(O(\sqrt{n}))$ to $\exp(O(n^{1/3+c}))$ for any constant $c>0$. Consequently, we establish the high accuracy optimality of the PML plug-in approach for estimating various symmetric properties within any accuracy parameter $\varepsilon\gg n^{-1/3}$. Our new competitive analysis relies on a novel continuity property of the PML distributions as well as a careful and involved chaining argument, both of which could be of independent technical interest. 
	\item We provide a novel estimator for the sorted discrete distribution with rate-optimal sample complexity and near-optimal Gaussian tails. Improving on the local moment matching idea in \cite{han2018local}, the construction of our estimator crucially relies on a new surrogate loss and a notion of smoothed local moments to remove the usual sample splitting technique and increase the stability. This idea also leads to a better trade-off between the polynomial approximation error and the maximum magnitude of coefficients in the Poisson approximation of $1$-Lipschitz functions, a side result in approximation theory. 
	\item Combining the above two results, we show that the PML distribution is optimal in estimating the sorted distribution for the largest accuracy range $\varepsilon\gg n^{-1/3}$, and therefore establish a complete analogy between the PML and the classical ML distributions. 
\end{enumerate}

\subsection{Our Techniques}
\noindent\textbf{Refined PML Analysis.} Let $\Phi_n$ be the set of all possible profiles generated by $n$ observations, and for each profile $\phi\in \Phi_n$, let $p_\phi$ be the PML distribution associated with the profile $\phi$. The target is to show that, for a random profile $\phi$ generated from the data distribution $p$, the associated PML distribution $p_\phi$ will be close to $p$ in some proper distance. A big challenge in the analysis of PML is that the PML distribution $p_\phi$ is the solution to a complicated and non-convex optimization program, so very few things can be said about $p_\phi$ except for its defining property $\bP(p_\phi, \phi)\ge \bP(p,\phi)$ for all possible distributions $p$, where $\bP(p,\phi)$ denotes the probability of observing the profile $\phi$ if the true data distribution is $p$. To overcome this difficulty, \cite{acharya2017unified} makes a key observation which we interpret in a slightly different way below: for each data distribution $p$, there is an associated subset of ``good profiles" $G\subseteq \Phi_n$ such that $\bP(p,G) \triangleq \sum_{\phi\in G}\bP(p, \phi) \ge 1- \delta$, and $p_\phi$ is $2\varepsilon$-close to $p$ whenever $\bP(p_\phi, G)> \delta$ (cf. Lemma \ref{lemma:PML}). Consequently, the probability that the random PML distribution $p_\phi$ with $\phi\sim p$ is at least $2\varepsilon$-far from $p$ is at most
\begin{align}\label{eq:error_prob}
\sum_{\phi\in \Phi_n} \bP(p,\phi)\1(\bP(p_\phi,G)\le \delta) \le \sum_{\phi\in G} \bP(p,\phi)\1(\bP(p_\phi,G)\le \delta) + \delta, 
\end{align}
as $\bP(p,\Phi_n\backslash G)\le \delta$. Therefore, to upper bound the above probability, it remains to show that for each profile $\phi \in G$, the PML distribution $p_\phi$ puts a large probability mass on $G$. To this end, \cite{acharya2017unified} uses the defining property of PML, i.e. $\bP(p_\phi, G) \ge \bP(p_\phi, \phi) \ge \bP(p,\phi)$. Consequently, the above error probability \eqref{eq:error_prob} is further upper bounded by
\begin{align*}
\sum_{\phi\in G} \bP(p,\phi)\1(\bP(p,\phi)\le \delta) + \delta \le (|\Phi_n|+1)\delta, 
\end{align*}
where the cardinality of $\Phi_n$ satisfies $|\Phi_n|\le\exp(3\sqrt{n})$ (cf. Lemma \ref{lemma:profile}), giving the claimed result in \cite{acharya2017unified} (plus an additive small $\delta$). 

How should we improve the analysis above? A possibly loose step is the na\"{i}ve inequality $\bP(p_\phi, G) \ge \bP(p_\phi, \phi)$, which is tight only if the PML distribution $p_\phi$ puts most of its mass on the single profile $\{\phi\}$, or equivalently, $\bP(p_\phi, \phi')$ is very small for all other profiles $\phi'\neq \phi$. In contrast, if we could show that $\bP(p_\phi, \phi) \approx \bP(p_{\phi'}, \phi)$ for any $\phi'\neq \phi$, i.e. other PML distributions $p_{\phi'}$ also have a large probability on $\phi$ comparable to that of the maximizer $p_\phi$, then $\bP(p_{\phi'}, G)\le \delta$ would imply that 
\begin{align*}
\delta \ge \sum_{\phi\in G} \bP(p_{\phi'}, \phi) \approx  \sum_{\phi\in G} \bP(p_{\phi}, \phi) \ge \sum_{\phi\in G} \bP(p,\phi). 
\end{align*}
Consequently, the error probability in \eqref{eq:error_prob} would be as small as $O(\delta)$! However, there are two key challenges in the above ideal arguments. First, it is unlikely to conclude that $\bP(p_\phi, \phi) \approx \bP(p_{\phi'}, \phi)$ for \emph{all} pairs of profiles $\phi \neq \phi'$, especially when these profiles are ``far apart". Second, and more importantly, how can we prove that $\bP(p_\phi, \phi) \approx \bP(p_{\phi'}, \phi)$ even for a \emph{given} pair $(\phi,\phi')$ if the only property we know about PML distributions is the defining property $\bP(p_\phi, \phi) \ge \bP(p,\phi)$ for all $p$? Note that if we \emph{only} knew the defining property of $p_\phi$, it would be definitely possible that different $p_\phi$'s are mutually singular and the argument in \cite{acharya2017unified} is tight. 

To overcome the above difficulties, our key argument is to utilize a novel ``continuity" property of PML distributions. Specifically, to address the first challenge, we show that $\bP(p_\phi, \phi) \approx \bP(p_{\phi'}, \phi)$ holds \emph{locally} when the profiles $\phi$ and $\phi'$ belong to the same local ball; in other words, the PML distribution $p_\phi$ changes in a continuous way from one profile $\phi$ to another. For the second challenge, i.e. how to show the above relationship even locally, we observe the fact that profiles are deterministic functions of \emph{histograms}, and utilize the data processing inequality to show a \emph{covering} property of all distributions of the form $\bP(p,\cdot)$ on profiles. Mathematically, we show that there exists three functions $r(n), s(n), t(n)$ such that there is a class $\calN$ of distributions such that $|\calN| \le r(n)$, and any distribution $p$ gives rise to a distribution $q\in \calN$ such that for any $S\subseteq \Phi_n$, the following approximation properties hold: 
\begin{align}
\bP(p,S) \ge \bP(q,S)^{1/(1-s(n))}\cdot \exp(-t(n)), \label{eq:approximation_1} \\
\bP(q,S) \ge \bP(p,S)^{1/(1-s(n))}\cdot \exp(-t(n)). \label{eq:approximation_2}
\end{align}
Ideally, we want $r(n)$ and $t(n)$ to be small positive integers, and $s(n)\in (0,1)$ to be a small number close to zero; however, there exist tradeoffs among these terms, and we defer the precise statements to Lemma \ref{lemma:continuity}. The above result implies that the class $\calN$ of distributions immediately results in a \emph{covering} of all PML distributions: if $p_{\phi} $ and $p_{\phi'}$ correspond to the same element of $\calN$ (i.e. in the same cover), then repeated applications of \eqref{eq:approximation_1} and \eqref{eq:approximation_2} show that for any $S\subseteq \Phi_n$, the two probabilities $\bP(p_\phi,S)$ and $\bP(p_{\phi'},S)$ are close in a proper multiplicative sense. In other words, each cover consists of multiple similar PML distributions, a manifestation of the continuity property of PML distributions. Therefore, the above continuity property simultaneously addresses both challenges, providing the precise definitions of locality and the approximation $\bP(p_\phi, \phi) \approx \bP(p_{\phi'}, \phi)$. 

Now the help from the above continuity property to an upper bound of the error probability in \eqref{eq:error_prob} is a careful modification of the analysis in the previous ideal scenario. Specifically, for each covering of profiles $G_q \subseteq G$ corresponding to the element $q\in\calN$, some repeated applications of \eqref{eq:approximation_1} and \eqref{eq:approximation_2} indicate that the error probability summing over all local profiles $\phi\in G_q$ is at most $\delta^{1-o(1)}\exp(O(s(n)|\Phi_n|+t(n)))$; the details are deferred to Section \ref{subsec:warmup}. As $|\calN|\le r(n)$, the total error probability in \eqref{eq:error_prob} is at most $\delta^{1-o(1)}r(n)\exp(O(s(n)|\Phi_n|+t(n)))$, where the best exponent after trading off $r(n),s(n),t(n)$ is $O(n^{3/8}\log n)$, already a notable improvement over the exponent $O(\sqrt{n})$ in \cite{acharya2017unified}. For the further improvement from $O(n^{3/8}\log n)$ to $O(n^{1/3+c})$ for any constant $c>0$, instead of working on a single layer of coverings $\calN$, we construct a careful chain of coverings $\calN_1 \supseteq \calN_2 \supseteq \cdots$ (as well as different functions $r_i(n), s_i(n), t_i(n)$ at the $i$-th layer) and make use of different continuity properties at each layer. We defer the details to Section \ref{subsec:chaining}. 

\vspace{1.5mm}
\noindent\textbf{Sorted Distribution Estimation.} The key difficulty in constructing an estimator of the sorted distribution satisfying \eqref{eq:PML_sorted}, in addition to its minimax rate-optimality, is to make the estimator stable to slight perturbations in observed samples. This task is relatively simple when the target property is a \emph{scalar}, where a typical estimator, as a function of the i.i.d. observations $(X_1,\cdots,X_n)$ or equivalently the histograms $(h_1,\cdots,h_k)$, is a sum of base estimators taking the form $f_i(h_i), i\in [k]$ with thus easily analyzable perturbations. Nearly all past works \cite{acharya2017unified,hao2019broad,hao2019unified,hao2020profile} fall into the above category to establish sub-Gaussian tails for proper univariate functions $f_i$. However, for sorted distribution estimation, the target property is a \emph{high-dimensional vector}, and all past known estimators such as the ones in \cite{Valiant--Valiant2013estimating,han2018local} are highly complex and cannot be written in the above simple form. For example, the complete minimax rate-optimal estimator in \cite{han2018local} involves a sample splitting technique to assign each domain element to one of many local intervals, whereas in each local interval a linear program is solved to form the final estimator. Hence, there are two reasons for the instability of the resulting estimator. First, solving a linear program can be treated as an \emph{inverse problem}, which might be ill-conditioned without any closed-form solution, so it is challenging to translate the closeness in the LP parameters to the closeness of solutions. Second, the sample splitting technique makes it easy to have some domain element assigned to another local interval after perturbation, which in turn results in a substantial change on the final estimator. We remark that nearly all past works on property estimation (not limited to establishing sub-Gaussian tails) relied on sample splitting to acquire independence; we refer to the related work section for details. 

To overcome the difficulty in analyzing the inverse problem, in this paper we propose a new objective function which serves as a \emph{surrogate loss} for the sorted $\ell_1$ distance. Consequently, the minimization of this surrogate loss will also imply the closeness in the target sorted $\ell_1$ distance. Specifically, we compute some statistic $T=T(X_1,\cdots,X_n)\in \bR^m$ from the samples, and define the estimator $\widehat{p}$ to be the minimizer of some loss $L: \calM\times \bR^m \to \bR_+$, i.e. $\widehat{p} = \arg\min_p L(p,T)$. The function $L$ is chosen to be a surrogate loss in the sense that, for any discrete distributions $p,q$ and any (random) statistic $T$, it holds deterministically that
\begin{align}\label{eq:surrogate}
\|p - q\|_{1,\text{sorted}} \le L(p,T) + L(q,T) + \text{fixed error terms}. 
\end{align}
In view of \eqref{eq:surrogate} and the definition that $L(\widehat{p},T) \le L(p,T)$ for the true data distribution $p$, we have $\|p - \widehat{p}\|_{1,\text{sorted}}\le 2L(p,T) + \text{fixed error terms}$. Consequently, we mitigate the drawback caused by the inverse problem and shift from handling potentially very complicated $\widehat{p}$ to handling a simpler quantity $L(p,T)$ where $p$ is the true data distribution. To convince the readers that $L$ is indeed a simpler quantity, we remark that our surrogate loss takes the form $L(p,T) = \sum_{j=1}^m w_j |f_j(p) - T_j|$, which is simply a weighted sum of the estimation errors of the statistic $T_j$ in estimating a properly chosen property $f_j(p)$ of the data distribution $p$. Consequently, it should be expected to become much easier to control the expectation and the perturbation property of $L(p,T)$. 

The sample splitting comes into play in the functions $f_j(p)$. In fact, the local moment matching idea in \cite{han2018local} suggests that ideally we should choose $f_j(p)$ to take the form $\sum_{i=1}^k p_i^d\1(p_i\in I)$, which is some moment of the probability vector $p=(p_1,\cdots,p_k)$ restricted to a given local interval $I\subseteq [0,1]$. However, this quantity cannot be reliably estimated by any statistic $T_j$ based on samples, as $\1(p_i\in I)$ is hard to locate when $p_i$ is close to the boundary of $I$. To mitigate this drawback, in the past sample splitting is used: split each empirical frequency $\widehat{p}_i$ into independent copies $\widehat{p}_{i,1}, \widehat{p}_{i,2}$, then the modified quantity $\sum_{i=1}^k p_i^d\1(\widehat{p}_{i,1}\in I)$ admits an \emph{unbiased} estimator based on $\widehat{p}_{i,2}$'s. Although this new quantity works for obtaining a minimax rate-optimal estimator, it still introduces instability when $\widehat{p}_{i,1}$ changes from one local interval to another. In this paper, we introduce the concept of \emph{smoothed local moments}, where the property $f_j(p)$ is now chosen to be $\sum_{i=1}^k p_i^d\cdot \bP(\mathsf{Poi}(np_i/2)\in nI/2)$ and the Poisson probability can be viewed as a soft version of the hard decision $\1(p_i\in I)$. This new quantity satisfies both advantages: first, it admits an unbiased estimator and can be estimated reliably; second, the unbiased estimator for this quantity changes smoothly in perturbations. This idea removes the sample splitting in general property estimation problems, and is the key to the desired high probability results. 

\vspace{1.5mm}
\noindent\textbf{Implications on Approximation Theory.} The above idea to remove sample splitting is useful not only in obtaining high probability results in estimation theory, but also in providing a general tool to piece together different approaches in different local intervals smoothly. We illustrate this point using an example in approximation theory. Let $f$ be any $1$-Lipschitz function on $[0,1]$ with $f(0)=0$, then what is the best trade-off between the Bernstein polynomial approximation error $\max_{x\in [0,1]} |f(x) - \sum_{i=0}^n a_iB_{n,i}(x)|$, where $B_{n,i}(x) = \binom{n}{i}x^i(1-x)^{n-i}$ is the Bernstein basis, and the maximum coefficient $\max_i |a_i|$? This question, although purely approximation theoretic, plays a central role in analyzing the population maximum likelihood estimator under the empirical Bayes framework \cite{vinayak2019maximum,vinayak2019optimal}, where it was shown that one may achieve an approximation error $O(1/\sqrt{n\log n})$ while $\max_i |a_i| = O(n^{1.5+\varepsilon})$. However, we show that it is in fact possible to achieve $\max_i |a_i| = O(1)$ with the same approximation error, which in turn enlarges the applicable range of the results in \cite{vinayak2019maximum,vinayak2019optimal}. The crucial idea in establishing this new result is to perform polynomial approximation locally, and then utilize our previous smoothing idea to piece together different local polynomials. 

\subsection{Related Work}
Following the principle of maximum likelihood proposed by Ronald Fisher and its success in the classical H\'{a}jek--Le Cam asymptotic theory \cite[Chapter 9]{Vandervaart2000}, \cite{orlitsky2004modeling} suggested the concept of the \emph{profile maximum likelihood} with the symbol labels discarded, and \cite{orlitsky2011estimating} summarized some basic properties of the PML. Since then, a series of work were devoted to the computational side of the PML, where approaches such as the Bethe/Sinkhorn approximation \cite{vontobel2012bethe,vontobel2014bethe}, the EM algorithm \cite{orlitsky2004algorithms}, algebraic approaches \cite{acharya2010exact} and a dynamic programming approach \cite{pavlichin2019approximate} were proposed to compute an approximate PML. Very recently, \cite{charikar2019efficient} proposed the first polynomial-time algorithm that provably computes an $\exp(-n^{2/3} \log n)$-approximate PML distribution. This result was later improved in \cite{ACSS20}, where the authors present a polynomial time algorithm to compute an $\exp(-\sqrt{n} \log n)$-approximate PML distribution.

Recently, \cite{acharya2017unified} studied the statistical side of the PML, where they showed that PML based plug-in approach is competitive to the optimal estimator in symmetric functional estimation, with error probability $\delta$ amplified to $\delta\exp(3\sqrt{n})$. Consequently, to analyze the performance of PML one may simply turn to another problem, i.e. analyze the performance of the optimal estimator, then invoke the above competitive result. This line of thought was adopted in \cite{hao2019broad}, which studied the optimal estimation of various symmetric properties. There were also other attempts to improve the PML analysis. For example, the work \cite{charikar2019general,hao2019broad} independently proposed modifications of PML called pseudo/truncated PML with better statistical properties for a \emph{subset} of symmetric properties, while did not touch the vanilla PML analysis in general. Using a property specific approach, the authors in \cite{charikar2019general} showed that the PML based estimator is minimax optimal in estimating the support size for all regimes of error parameter $\varepsilon$, where the support of PML can be computed explicitly and the analysis does not extend to any other properties. Under the empirical Bayes setting, the notion of the population MLE in \cite{vinayak2019maximum,vinayak2019optimal} is similar in spirit to the PML, while their population likelihood admits a much simpler additive form and is therefore easier to deal with. Finally, the most recent work \cite{hao2020profile} showed that the exponent $3\sqrt{n}$ for the vanilla PML in \cite{acharya2017unified} could be improved to a distribution-specific quantity called \emph{profile entropy}, but this quantity is still $\Omega(\sqrt{n})$ for the worst-case distribution. In contrast, we give a generic improvement to $O(n^{1/3+c})$ even for worst-case distributions. Therefore, our work is the first improvement over \cite{acharya2017unified} on general statistical properties (or the competitive analysis) of the vanilla PML without any modifications. 

We also review the literature on symmetric property estimation, or the functional estimation in general, which has been extensively studied in the past decade. Specifically, the minimax-optimal estimation procedures have been found for many non-smooth 1D properties, including the entropy \cite{Paninski2003,Paninski2004,Valiant--Valiant2011,Jiao--Venkat--Han--Weissman2015minimax,wu2016minimax}, $L_1$ norm of the mean \cite{Cai--Low2011}, support size \cite{Valiant--Valiant2013estimating, wu2019chebyshev}, support coverage \cite{orlitsky2016optimal,ZVVKCSLSDM16}, distance to uniformity \cite{Valiant--Valiant2011power,jiao2018minimax}, and general 1-Lipschitz functions \cite{hao2019broad,hao2019unified}. Similar procedures can also be generalized to 2D functionals \cite{han2016minimax,bu2018estimation,jiao2018minimax} and nonparametric problems \cite{Lepski--Nemirovski--Spokoiny1999estimation,han2017estimation,han2017optimal}. We refer to the survey paper \cite{verdu2019empirical} for an overview of the results. However, two technical tricks were consistently used in these works. First, most constructed estimators were only proved to work in Poissonized models (where there is independence across different domain elements), with the only exception \cite{han2017optimal} which applied a careful Efron--Stein--Steele inequality to the i.i.d. sampling model. Second, most works relied on the sample splitting to carry out the two-stage estimation procedure summarized in \cite{Jiao--Venkat--Han--Weissman2015minimax} independently, with the notable exceptions \cite{orlitsky2016optimal,wu2019chebyshev} using explicit construction of linear estimators to get rid of the sample splitting. However, a general methodology of interpolating between different estimation regimes is still underexplored, and no concentration properties were proved in the above works. These issues prevent from constructing minimax optimal estimators in the original sampling model with good concentration properties, e.g. in the example of sorted distribution estimation. 

The general problem of sorted distribution estimation, or the estimation of the parameter multiset, was studied in \cite{Valiant--Valiant2013estimating,han2018local} for discrete distributions and \cite{rigollet2019uncoupled} for the Gaussian location model. The minimax optimal procedures in the above works are mainly based on method of moments dating back to \cite{pearson1894contributions}, which was also used in \cite{hardt2015tight,wu2018optimal} for learning Gaussian mixtures and \cite{kong2017spectrum,tian2017learning} for learning a population of parameters. However, the above performance guarantees only hold in expectation, and it is highly challenging to construct an estimator with an exponential concentration property due to the complicated and potential unstable nature of the method of moments. As for the property of PML in estimating sorted distributions, the pioneering work \cite{orlitsky2005convergence} established the consistency of PML, and \cite{orlitsky2011estimating,anevski2017estimating} proved an $O(n^{-1/2})$ convergence rate of the expected $\ell_\infty$ distance. For the sorted $\ell_1$ distance, the paper \cite{acharya2012estimating} (see also the thesis \cite{das2012competitive}) used the competitive analysis to show the optimality of the PML for $\varepsilon=\Omega(1/\log n)$ based on a modified estimator to \cite{Valiant--Valiant2013estimating}. The accuracy parameter $\varepsilon$ was later improved to $\varepsilon \ge n^{-0.1}$ and $\varepsilon \gg n^{-1/8}$ in recent work \cite{hao2019broad,hao2020profile}, which still does not reach the optimal threshold $\varepsilon\gg n^{-1/3}$ established in \cite{han2018local}. The main drawback is that the arguments in \cite{hao2019broad,hao2020profile} factor through the estimation of general $1$-Lipschitz functionals and are thus indirect, which seem insufficient to achieve the expected threshold $\varepsilon\gg n^{-1/4}$ even if one applies the known competitive result of \cite{acharya2017unified}. In contrast, we simultaneously improve the essential competitive analysis and provide a new estimator for the sorted distribution to feed into the competitive analysis, which in combination helps attain the optimal threshold $\varepsilon\gg n^{-1/3}$. Therefore, we provide a complete answer on the estimation performance of the PML of the sorted distribution in the sense that the PML is minimax rate-optimal in the largest possible error regime $\varepsilon \gg n^{-1/3}$, and there exists an explicit estimator which estimates the sorted distribution in an optimal way and with exponentially high probability.

\subsection{Organization}
The rest of the paper is organized as follows. Section \ref{sec:prelim} lists useful notations and preliminaries for this paper, and Section \ref{sec:main_result} summarizes the main results. Section \ref{sec.PML} presents the proof of Theorem \ref{thm:PML_main}, where we propose a notion of continuity on the PML distributions and apply a novel chaining argument. Section \ref{sec.estimator} presents the detailed construction of the optimal estimator for the sorted distribution and the roadmap of the proof of Theorem \ref{thm:sorted_main}. In particular, in Section \ref{subsec.poisson_approx} we obtain an improved trade-off between the polynomial approximation error and the maximum magnitude of coefficients in the Poisson approximation of $1$-Lipschitz functions, which may be of independent interest in approximation theory. Remaining proofs are relegated to the appendices. 

%!TEX root = main.tex

\newcommand{\R}{\mathbb{R}}
\newcommand{\nprob}[2]{\mathbb{P}\left(#1,#2\right)}
\newcommand{\pml}{p^{\text{\rm PML}}}
\newcommand{\kkdel}[1]{{\color{blue} Delete This:  #1}}
\newcommand{\constA}{A}
\newcommand{\estf}{\widehat{f}}
\newcommand{\csto}{c_{1}}
\newcommand{\cstt}{c_{2}}
\newcommand{\cstth}{c_{3}}
\newcommand{\setim}{S_{i,m}}

\section{Preliminaries}\label{sec:prelim}
\subsection{Notations}
Throughout the paper we adopt the following notations. Let $\NN$ be the set of all positive integers, and for $n\in \NN$, let $[n]\triangleq \{1,2,\ldots,n\}$. For reals $a,b\in\RR$, let $[a,b]$ and $(a,b)$ denote the closed and open intervals from $a$ to $b$, respectively. For a finite set $A$, let $|A|$ be the cardinality of $A$. Let $\calM$ denote a generic subset of all discrete distributions supported on $\NN$, where the specific choice of $\calM$ depends on the context\footnote{Typically $\calM$ is chosen to be the set of all discrete distributions supported on some finite domain $[k]$ (e.g. in the estimation of entropy, distance to uniformity, and sorted distribution), but there are a few exceptions. For example, in the support size estimation $\calM$ is the set of all discrete distributions supported on $\NN$ with all non-zero probability masses at least $1/k$, and in the support coverage estimation $\calM$ can be the entire set of discrete distributions on $\NN$.}, and for $k\in\NN$, let $\calM_k$ be the set of all discrete distributions supported on $[k]$. Let $\Poi(\lambda)$ be the Poisson distribution with rate parameter $\lambda\ge 0$, and $\mathsf{B}(n,p)$ be the Binomial distribution with number of trials $n$ and success probability $p\in [0,1]$. We adopt the following asymptotic notations. For non-negative sequences $\{a_n\}$ and $\{b_n\}$, we write $a_n\lesssim b_n$ (or $a_n = O(b_n)$) to denote that $\limsup_{n\to\infty}a_n/b_n<\infty$, and $a_n \gtrsim b_n$ (or $a_n = \Omega(b_n)$) to denote $b_n\lesssim a_n$, and $a_n\asymp b_n$ (or $a_n = \Theta(b_n)$) to denote both $a_n\lesssim b_n$ and $b_n\lesssim a_n$. Moreover, let $\widetilde{O}(\cdot)$, $ \widetilde{\Omega}(\cdot)$ and $ \widetilde{\Theta}(\cdot)$ denote the respective meanings within multiplicative poly-logarithmic factors in $n$. We also write $a_n\ll b_n$ to denote that $\limsup_{n\to\infty} n^{\varepsilon} a_n/b_n = 0$ for some small $\varepsilon>0$, and $a_n \gg b_n$ to denote $b_n\ll a_n$. 

\subsection{Preliminaries}
Throughout the paper we will consider the following i.i.d. sampling model: let $p\in\calM$ be an unknown discrete distribution, and the learner observes a sequence $x^n=(x_1,\ldots,x_n)$ of length $n$ drawn i.i.d. from the hidden distribution $p$. Specifically, the statistical model is given by
$$\nprob{p}{x^n} = \prod_{i \in [n]} p(x_i),$$
where $p(x)$ denotes the probability mass of the domain element $x\in \NN$. Based on the observation sequence $x^n$, we next review the definitions of \emph{histograms}\footnote{Histograms are also known as \emph{types} in the information theory literature.} and \emph{profiles}. 
\begin{definition}[Histogram]\label{def:histogram}
	Given a sequence $x^n \in [k]^n$ of length $n$, the \emph{histogram} of $x^n$ is defined as a vector $h = (h_1,\ldots,h_k)$ with $h_j$ being the number of occurrences of symbol $j \in [k]$, i.e.,
	\begin{align*}
	h_j = \sum_{i \in [n]} \1(x_i = j). 
	\end{align*}
\end{definition}

\begin{definition}[Profile]\label{def:profile}
	Given a sequence $x^n\in [k]^n$ of length $n$ with a histogram $h=(h_1,\cdots,h_k)$, the \emph{profile} $\Phi(x^n)$ of $x^n$ is defined as the vector $\phi = (\phi_1,\cdots,\phi_n)$ with $\phi_i$ being the number of symbols occurring $i$ times, i.e.,
	\begin{align*}
	\phi_i = \sum_{j\in [k]} \1(h_j = i). 
	\end{align*}
	Note that we exclude $\phi_0$ from the profile as we do not observe the unseen symbols and the domain size $k$ is unknown. Let $\Phi_n$ denote the set of all profiles generated by observations of length $n$. 
\end{definition}
The following lemma bounds the cardinality of distinct profiles. 
\begin{lemma}[\!\!\cite{hardy1918asymptotic}]\label{lemma:profile}
	For any $n\in \NN$, we have $|\Phi_n| \le \exp(3\sqrt{n})$. 
\end{lemma}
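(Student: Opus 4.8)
The plan is to identify profiles with integer partitions of $n$ and then invoke a classical upper bound on the partition function. First I would observe that for any sequence $x^n \in [k]^n$ with histogram $h = (h_1,\dots,h_k)$ one has $\sum_{j\in[k]} h_j = n$, so the associated profile $\phi = (\phi_1,\dots,\phi_n)$ satisfies
\begin{align*}
\sum_{i=1}^n i\,\phi_i = \sum_{i=1}^n i \sum_{j\in[k]} \1(h_j = i) = \sum_{j\in[k]} h_j = n,
\end{align*}
with all $\phi_i$ nonnegative integers. Thus every profile in $\Phi_n$ determines, and is determined by, a partition of $n$, namely the partition having exactly $\phi_i$ parts equal to $i$ for each $i$. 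Consequently $|\Phi_n| \le p(n)$, where $p(n)$ denotes the number of integer partitions of $n$.

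It then remains to show $p(n) \le \exp(3\sqrt n)$. I would do this via the generating function $F(x) = \sum_{m\ge 0} p(m) x^m = \prod_{k\ge 1}(1-x^k)^{-1}$, valid for $x \in (0,1)$. Taking logarithms and expanding, $\log F(x) = \sum_{k\ge 1}\sum_{j\ge 1} \frac{x^{kj}}{j} = \sum_{j\ge 1}\frac1j\cdot\frac{x^j}{1-x^j}$. Using the elementary inequality $1 - x^j = (1-x)(1+x+\cdots+x^{j-1}) \ge j x^{j-1}(1-x)$ for $x\in(0,1)$, each summand is at most $\frac{x}{j^2(1-x)}$, so $\log F(x) \le \frac{\pi^2}{6}\cdot\frac{x}{1-x}$. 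Since $p(n) x^n \le F(x)$, we get $p(n) \le \exp\!\big(\frac{\pi^2}{6}\cdot\frac{x}{1-x} - n\log x\big)$ for every $x\in(0,1)$. Writing $u = x/(1-x)>0$ and using $-\log x = \log(1+1/u) \le 1/u$, the exponent is at most $\frac{\pi^2}{6}u + \frac nu$; optimizing over $u$ (with $u = \sqrt{6n}/\pi$) yields $\log p(n) \le 2\sqrt{\pi^2 n/6} = \pi\sqrt{2n/3}$. Finally, $\pi\sqrt{2/3} = 2.565\ldots < 3$, so $|\Phi_n| \le p(n) \le \exp(\pi\sqrt{2n/3}) \le \exp(3\sqrt n)$, completing the proof.

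I do not anticipate a serious obstacle: the only mildly delicate points are the termwise bound $1 - x^j \ge jx^{j-1}(1-x)$ (immediate from $1 + x + \cdots + x^{j-1} \ge j x^{j-1}$) and the one-variable optimization, both routine. An alternative to the generating-function computation would be to cite the Hardy--Ramanujan asymptotic $p(n) \sim (4n\sqrt 3)^{-1}\exp(\pi\sqrt{2n/3})$ together with a finite check for small $n$, but the self-contained argument above is cleaner and avoids any case analysis.
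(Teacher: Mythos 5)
Your proof is correct. The paper itself gives no argument for this lemma; it simply cites Hardy--Ramanujan \cite{hardy1918asymptotic}, so yours is a self-contained substitute rather than a parallel to anything in the text. Your two steps are both sound: (i) the identification $|\Phi_n| = p(n)$ via $\sum_i i\phi_i = \sum_j h_j = n$ (a bijection, since any partition of $n$ is realized as the profile of some sequence), and (ii) the generating-function bound. In (ii), the termwise inequality $1-x^j = (1-x)(1+\cdots+x^{j-1}) \ge j x^{j-1}(1-x)$ correctly gives $\log F(x) \le \frac{\pi^2}{6}\cdot\frac{x}{1-x}$, and the substitution $u = x/(1-x)$ together with $-\log x = \log(1+1/u)\le 1/u$ and the AM--GM optimization of $\frac{\pi^2}{6}u + n/u$ at $u=\sqrt{6n}/\pi$ yields $\log p(n) \le \pi\sqrt{2n/3} \approx 2.565\sqrt n < 3\sqrt n$. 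This is the standard elementary (Erd\H{o}s-style) saddle-point upper bound on $p(n)$ and it matches the sharp Hardy--Ramanujan exponent; its advantage over the cited asymptotic is precisely what you note --- it is an unconditional inequality valid for every $n$, so no finite case check or explicit error term is needed. No gaps.
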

For a profile $\phi \in \Phi_n$ and a distribution $p \in \calM$, let 
\begin{align*}
\nprob{p}{\phi} = \sum_{x^n \in \NN^n: \Phi(x^n) = \phi} \nprob{p}{x^n}
\end{align*}
be the probability of observing a sample $x^n$ with profile $\phi$ under $p$. 
Further, for any subset $S \subseteq \Phi_n$, we use the $\nprob{p}{S}$ to denote the following: 
$$\nprob{p}{S} = \sum_{\phi \in S} \nprob{p}{\phi}.$$

The profile of a sequence is a sufficient statistic for any symmetric property estimation and it further motivates the following definition of profile maximum likelihood.
\begin{definition}[PML]
	Given a profile $\phi \in \Phi_n$, the \emph{profile maximum likelihood (PML)} distribution is defined as
	\begin{align*}
	\pml = \arg\max_{p\in \calM} \nprob{p}{\phi},
	\end{align*}
	where $\calM$ is the feasible set of all probability distributions when solving the PML distribution. Further for any $\beta \in (0,1]$, the distribution $p^{\mathrm{PML},\beta}\in\calM$ is a $\beta$-approximate PML distribution if it satisfies 
	\begin{align*}
	\nprob{p^{\mathrm{PML},\beta}}{\phi} \geq \beta \cdot \max_{p\in \calM} \nprob{p}{\phi}.
	\end{align*}
\end{definition} 

We call function $d: \calM\times \calM \to \bR_+$ a pseudo-metric if it is non-negative, symmetric and satisfies the triangle inequality. The pseudo-metric $d(p,q)$ may be measured directly based on norms of distributions (such as the $\ell_1$ and $\ell_2$ norms of $p-q$), or via properties of distributions, e.g. $d(p,q) = |f(p) - f(q)|$ is the distance between $p$ and $q$ based on a general property $f:\calM \to \bR$. In the above property estimation example, we may also interpret $d(p,q)$ as the estimation error of $f(p)$ when one uses a plug-in estimator $f(q)$ based on the distribution $q$. Let $\calA$ be a general action space containing all possible outputs of estimators. We call a loss function $L: \calA\times \calM \to \bR_+$ to be \emph{compatible with $d$} if and only if for any $p,q\in \calM$ and action $a\in \calA$, the following triangle inequality holds: 
\begin{align}\label{eq:compatible_loss}
d(p,q) \le L(a,p) + L(a,q). 
\end{align} 
In the above property estimation example, the action space is $\calA = \bR$ since each action corresponds to an estimate of $f(p)$. Hence, a natural compatible loss function is $L(a,p) = |a - f(p)|$, i.e. the difference between the estimate $a$ and the true property value $f(p)$. 

The main pseudo-metric $d$ considered in this work is the sorted $\ell_1$ distance between discrete distributions. Specifically, For discrete distributions $p,q\in \calM_k$, the sorted $\ell_1$ distance between $p$ and $q$ is defined as
\begin{align}\label{eq:sorted_l1}
\| p - q \|_1^< \triangleq \sum_{i=1}^k |p_{(i)} - q_{(i)}|, 
\end{align}
where $p_{(1)}\le p_{(2)} \le \cdots \le p_{(k)}$ and $q_{(1)}\le q_{(2)}\le \cdots\le q_{(k)}$ are the sorted versions of distributions $p$ and $q$, respectively. By a simple rearrangement inequality, the sorted $\ell_1$ distance is also the minimum $\ell_1$ distance between the unlabeled distributions $p$ and $q$ under all possible labelings (or permutations). There is also an equivalent formulation of the sorted $\ell_1$ distance in terms of the Wasserstein distance: for $p=(p_1,\cdots,p_k)\in \calM_k$, let the probability measure associated with $p$ be
\begin{align}\label{eq:measure_p}
\mu_p \triangleq \frac{1}{k}\sum_{i=1}^k \delta_{p_i},
\end{align}
where $\delta_x$ denotes the Dirac point mass at $x$. Then \cite[Lemma 7]{han2018local} shows that
\begin{align}\label{eq:sorted_Wasserstein}
\| p - q\|_1^< = k\cdot \text{W}_1(\mu_p, \mu_q),
\end{align}
where $\text{W}_1(\cdot,\cdot)$ is the Wasserstein-$1$ distance associated with the usual metric $d(x,y)=|x-y|$ on $\bR$ defined as
\begin{align*}
\text{W}_1(P,Q) \triangleq \inf_{X\sim P, Y \sim Q} \bE|X-Y|,
\end{align*}
and the infimum is taken over all possible couplings of $(P,Q)$. Hence, let the action space $\calA$ be the set of all possible probability measures, then the loss $L(\mu,p) = k\text{W}_1(\mu,\mu_p)$ is compatible with the sorted $\ell_1$ metric. Note that we slightly enlarged the action space and do not restrict $\mu$ to take the form of $\mu_q$ for some $q\in \calM_k$. 

Finally, we introduce a definition of $(\alpha,\beta)$-closeness between discrete distributions which will be useful in stating the main results. 
\begin{definition}[$(\alpha,\beta)$-closeness]\label{def:closeness}
	For any $\alpha,\beta\in (0,1)$ and a discrete distribution $p=(p_1,\cdots,p_k)$, we call that another distribution $p'=(p_1',\cdots,p_k')$ is $(\alpha,\beta)$-close to $p$ iff the following three conditions hold: 
	\begin{align*}
	\bullet ~p_i = 0  \Longrightarrow p_i' = 0 \quad \bullet ~p_i \le \alpha  \Longrightarrow p_i' \le \alpha \quad \bullet ~p_i > \alpha  \Longrightarrow \frac{p_i}{1+\beta} \le p_i' \le p_i.
	\end{align*}
\end{definition}
%!TEX root = main.tex
\section{Main Results}\label{sec:main_result}
In this section we state the main results of our paper. Our first result presents a generic competitive nature of the PML distribution compared with any other estimator depending only on the profile, with an improved competitive ratio on the probability of error. 
% can be reduced from $\delta\exp(3\sqrt{n})$~\cite{acharya2017unified} to $\delta^{1-c}\exp(n^{1/3+c})$ for arbitrarily small constants $c>0$.

%The general result of the PML estimator is summarized in the following theorem. 
\begin{theorem}\label{thm:PML_main}
	Let $d: \calM\times \calM \to \bR_+$ be a pseudo-metric and $L$ be a loss function compatible with $d$. Suppose there exists an estimator $\widehat{T}$ depending only on the profile of $n$ i.i.d. observations from $p$ that satisfies
	\begin{align}\label{eq:PML_assumption}
	\bP(L(\widehat{T}, p) \ge \varepsilon) \le \delta, \qquad \forall p\in \calM. 
	\end{align}
	Then for any constants $A>0, c>0$, there exists an absolute constant $c'=c'(A,c)>0$ such that the PML distribution $\pml$ restricted to $\calM$ satisfies
	\begin{align*}
	\bP(d(\pml, p) \ge 2\varepsilon + \varepsilon' )\le \delta^{1-c}\cdot \exp(c'n^{1/3+c})
	\end{align*}
	for all $p\in \calM$, where $\varepsilon'=\sup\{d(p,p'): p, p'\in \calM, p'\text{ is }(n^{-A}, 3n^{-A/2})\text{ close to }p\}$. 
\end{theorem}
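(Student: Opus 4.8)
The plan is to strengthen the competitive argument of \cite{acharya2017unified} by replacing the crude bound $|\Phi_n|\le \exp(3\sqrt n)$ (Lemma~\ref{lemma:profile}) with a covering of PML distributions coming from a ``continuity'' property, first in a one-scale version and then via a multi-scale chaining.

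\textbf{Step 1: reduction to good profiles.} Fix $p\in\calM$ and, following the key observation behind Lemma~\ref{lemma:PML}, put $G=G_p\triangleq\{\phi\in\Phi_n:L(\widehat T(\phi),p)<\varepsilon\}$, so that $\nprob{p}{G}\ge 1-\delta$ by \eqref{eq:PML_assumption}. For \emph{any} distribution $p'$ with $\nprob{p'}{G}>\delta$, applying \eqref{eq:PML_assumption} to $p'$ forces some $\phi^\star\in G$ with $L(\widehat T(\phi^\star),p')<\varepsilon$, whence $d(p',p)\le L(\widehat T(\phi^\star),p)+L(\widehat T(\phi^\star),p')<2\varepsilon$ by \eqref{eq:compatible_loss}. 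Let $q(\cdot)$ send a distribution to its representative in the finest covering $\calN$ built in Step~2, chosen so that $q(p')$ is $(n^{-A},3n^{-A/2})$-close to $p'$, hence $d(p',q(p'))\le \varepsilon'$ by definition of $\varepsilon'$. Then $\{d(\pml_\phi,p)\ge 2\varepsilon+\varepsilon'\}\subseteq\{d(q(\pml_\phi),p)\ge 2\varepsilon\}\subseteq\{\nprob{q(\pml_\phi)}{G}\le\delta\}$, so, exactly as in \eqref{eq:error_prob},
$$\bP\big(d(\pml_\phi,p)\ge 2\varepsilon+\varepsilon'\big)\ \le\ \sum_{\phi\in G}\nprob{p}{\phi}\,\1\!\big(\nprob{q(\pml_\phi)}{G}\le\delta\big)\ +\ \delta .$$

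\textbf{Step 2: continuity/covering of PML distributions.} This is the crux, which I would prove as Lemma~\ref{lemma:continuity}. For parameters $(\alpha,\beta)$ one constructs a family $\calN$ of sub-distributions with $\log|\calN|=\widetilde O(1/\beta)$ such that every $p'$ admits a representative $q=q(p')\in\calN$ that is $(\alpha,O(\sqrt\alpha))$-close to $p'$ and satisfies \eqref{eq:approximation_1}--\eqref{eq:approximation_2} for all $S\subseteq\Phi_n$ with $t(n)=\widetilde O(n\beta^2/s(n))$ and $s(n)\in(0,1)$ free. The construction snaps each $p'_i>\alpha$ down to the nearest point of a $(1+\beta)$-geometric grid on $[\alpha,1]$ (of which there are $M=\widetilde O(1/\beta)$), and merges the masses $\le\alpha$ (a symbol of mass $\le\alpha\le n^{-A}$ is seen with probability $\le n\alpha\ll1$, so it is irrelevant to any profile event); then $|\calN|$ counts placements of at most $n$ atoms on $M$ values. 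For the inequalities one passes to the Poissonized model, invokes the data-processing inequality along the deterministic map (histogram)$\mapsto$(profile) so that $\nprob{p'}{\cdot}$ and $\nprob{q}{\cdot}$ become \emph{product} measures over coordinates, and bounds the order-$1/s$ R\'enyi divergence coordinatewise by $D_{1/s}(\Poi(np'_i)\,\|\,\Poi(nq_i))=\widetilde O(np'_i\beta^2/s)$ (using $p'_i/q_i\in[1,1+\beta]$), which sums to $t=\widetilde O(n\beta^2/s)$; the change-of-measure inequality ``$P(S)\le Q(S)^{1-s}\exp\big((1-s)D_{1/s}(P\|Q)\big)$'' applied in both directions then yields \eqref{eq:approximation_1}--\eqref{eq:approximation_2}.

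\textbf{Steps 3--4: one-scale warm-up, then chaining.} Grouping the sum in Step~1 by $q=q(\pml_\phi)\in\calN$ and using $\nprob{p}{\phi}\le\nprob{\pml_\phi}{\phi}\le(\nprob{q}{\phi}e^{t})^{1-s}$ (PML optimality together with \eqref{eq:approximation_2} applied to $\pml_\phi$ with $S=\{\phi\}$), then H\"older and the concavity bound $\sum_q|\mathrm{cell}_q|^s\le|\calN|^{1-s}|\Phi_n|^s$ (the cells partition a subset of $\Phi_n$), gives
$$\bP\big(d(\pml_\phi,p)\ge 2\varepsilon+\varepsilon'\big)\ \lesssim\ \delta\ +\ \delta^{1-s}\exp\!\big(O\big(t+\log|\calN|+s\sqrt n\big)\big),$$
whose $n$-exponent $\tfrac1\beta+\tfrac{n\beta^2}{s}+s\sqrt n$ (up to $\mathrm{polylog}$) is minimized at $\beta\asymp n^{-3/8}$, $s\asymp n^{-1/8}$, reaching $\widetilde O(n^{3/8})$ with $\delta^{1-s}=\delta^{1-o(1)}$ --- already beating \cite{acharya2017unified}. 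The $3/8$ barrier comes entirely from the $|\Phi_n|^s$ (equivalently $s\sqrt n$) term. To remove it I would replace the single covering by a nested chain $\calN_1\supseteq\cdots\supseteq\calN_K$ at geometrically increasing scales $\beta_1<\cdots<\beta_K$ (with $\calN_1$ the $(n^{-A},3n^{-A/2})$-cover needed in Step~1, and $\calN_K$ the coarsest), telescope the continuity inequalities along the chain so that the \emph{marginal} cost of descending past scale $\beta_i$ is only $e^{O(t_i)}$ with $t_i=\widetilde O(n\beta_i^2/s_i)$, and sum the resulting scale-by-scale contributions; an appropriate schedule with $K=\widetilde O(1)$ layers localizes the dominant contribution near the crossover scale $\asymp(s/n)^{1/3}$ and yields exponent $O(n^{1/3+c})$ for any $c>0$, the constant $c'=c'(A,c)$ absorbing the layers, the $\mathrm{polylog}$ factors, and a choice of $\{s_i\}$ keeping the exponent of $\delta$ at least $1-c$. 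Details are deferred to Sections~\ref{subsec:warmup} and \ref{subsec:chaining}.

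\textbf{Main obstacle.} The genuinely new ingredient is Step~2 --- the two-sided continuity with a favorable tradeoff. The naive coordinatewise likelihood ratio $\prod_i(p'_i/q_i)^{h_i}$ is only $\le(1+\beta)^n=\exp(\Theta(\beta n))$, far too large; working with a \emph{R\'enyi} divergence of order $1/s$ rather than with KL converts $\exp(\Theta(\beta n))$ into $\exp(\widetilde O(n\beta^2/s))$, at the price of the fractional power $1-s$ on the probabilities --- and the sub-$\alpha$ masses must be merged rather than zeroed, otherwise one of the two directions in \eqref{eq:approximation_1}--\eqref{eq:approximation_2} degenerates (a zeroed coordinate makes the corresponding profile probability vanish). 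The second obstacle, in Step~4, is the combinatorial bookkeeping: one must pick the number of layers, the scale schedule $\{\beta_i\}$, and the orders $\{s_i\}$ so that the covering numbers, the accumulated R\'enyi penalties, and the $|\Phi_n|^{s_i}$ factors are simultaneously $\le n^{1/3+c}$ while $\prod_i(1-s_i)$ stays close to $1$ --- this is precisely where the single-scale exponent $3/8$ drops to $1/3$ and where the slack $c$ is spent.
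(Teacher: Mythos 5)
Your proposal follows essentially the same route as the paper's proof: the same reduction to the set $G$ of good profiles via Lemma \ref{lemma:PML}, the same covering/continuity lemma proved by Poissonization, data processing from histograms to profiles, and a high-order change of measure (your R\'enyi divergence of order $1/s$ is exactly the paper's $\chi^m$-divergence with $m=1/s$, and your penalty $t=\widetilde O(n\beta^2/s)$ matches the paper's $n^{1-2r+s}$ under $\beta=n^{-r}$, $1/s\asymp n^{s}$), the same one-scale optimization landing at exponent $n^{3/8}$, and the same nested chaining to reach $n^{1/3+c}$ (the explicit schedule you defer is \eqref{eq:r_m}--\eqref{eq:s_m}).

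The one sub-step that would fail as written is the treatment of the sub-$\alpha$ masses in Step 2. ``Merging'' them, justified by ``a symbol of mass $\le\alpha$ is seen with probability $\le n\alpha\ll1$, so it is irrelevant to any profile event,'' is incorrect: a constant fraction of the total mass can be spread over far more than $n$ symbols each of mass $\le n^{-A}$, and these symbols \emph{dominate} the profile (they contribute $\Theta(n)$ singletons), so collapsing them into a single atom changes $\nprob{\cdot}{\phi}$ by an exponential factor and one direction of \eqref{eq:approximation_1}--\eqref{eq:approximation_2} breaks. The correct operation is to \emph{redistribute} the sub-$\alpha$ mass over symbols each of mass in $[\alpha/2,\alpha]$: each such symbol individually appears more than once only with negligible probability, so the profile law is insensitive to the repartition and one loses only a constant factor $e^{-6}$ on $\nprob{\cdot}{\phi}$; this is the paper's Lemma \ref{lemma:min_prob} (a variant of Lemma 4.1 of \cite{charikar2019efficient}), applied per profile \emph{before} the genuine coverings, and it is precisely what generates the $\varepsilon'$ slack. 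A related bookkeeping point: if you instead make $\calN_1$ a genuine grid cover at scale $3n^{-A/2}$ so that $d(\pml,q(\pml))\le\varepsilon'$, then $\log|\calN_1|=\widetilde O(n^{A/2})$ may far exceed $\sqrt n$, so in the H\"older/Jensen steps you must count \emph{occupied} cells (at most $|\Phi_n|\le e^{3\sqrt n}$) rather than $|\calN_1|$ — the paper sidesteps this by taking the level-$0$ ``covering'' to be the profiles themselves, with $N_0=|G|$ and $r_0=1/2$.
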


%\begin{theorem}\label{thm:PML_main}
%Let $d: \calM\times \calM \to \bR_+$ be a pseudo-metric, and $L$ be a loss function compatible with $d$. Suppose there exists an estimator $\widehat{T}$ depending only on the profile of the i.i.d. observations $X_1,\cdots,X_n\sim p$ such that
%\begin{align}\label{eq:PML_assumption}
%\bP(L(\widehat{T}, p) \ge \varepsilon) \le \delta, \qquad \forall p\in \calM,
%\end{align}
%Then for any constants $A>0, c>0$, there exists an absolute constant $c'=c'(A,c)>0$ \kiran{where do you se c'} \yanjun{(YH: $c'$ is the leading constant on the exponent.)} such that the PML distribution $\pml$ restricted on $\calM$ satisfies
%\begin{align*}
%\bP(d(\pml, p) \ge 2\varepsilon + \varepsilon' )\le \delta^{1-c}\cdot \exp(c'n^{1/3+c}), \qquad \forall p\in \calM, 
%\end{align*}
%where
%\begin{align*}
%\varepsilon' = \sup\left\{d(p,p'): p, p'\in \calM, \ p' \text{ is }(n^{-A}, 3n^{-A/2})\text{-close to }p \right\}. 
%\end{align*}
%\end{theorem}

Informally, Theorem \ref{thm:PML_main} shows that the PML distribution is competitive in the sense that, if there exists an estimator $\widehat{T}$ depending only on the profile which is $\varepsilon$-close to the true distribution under a general loss with probability at least $1-\delta$, then the PML distribution is also $(2+o(1))\varepsilon$-close to the true distribution with probability at least $1-\delta^{1-c}\exp(c'n^{1/3+c})$. Hence, for estimators with the error probability $\delta$ smaller than $\exp(-n^{1/3+\delta'})$ for any $\delta'>0$, the PML distribution has the same error exponent as those estimators. 

To the authors' knowledge, Theorem \ref{thm:PML_main} is the first improvement on the error probability of the vanilla PML distribution over the only known result in \cite{acharya2017unified} that uses a simple union bound approach. Compared with \cite{acharya2017unified}, Theorem \ref{thm:PML_main} provides a refined error probability of the PML estimator where the multiplicative factor reduces from $\exp(3\sqrt{n})$ to $\exp(c'n^{1/3+c})$ for constant $c\to 0$. The dependence on $\delta$ becomes slightly worse (as it changes from $\delta$ to $\delta^{1-c}$), but it does not change the asymptotic order of the exponent as $\delta$ needs to be exponentially small for a non-trivial final probability of Theorem \ref{thm:PML_main}. The additional term $\varepsilon'$ is mostly technical, and for typical metric $d$ this quantity satisfies $\varepsilon'\le n^{-C}$ for an arbitrarily large exponent $C>0$ after choosing the parameter $A>0$ large enough (see the examples in Corollary \ref{cor:functional}). Since for most statistical estimation problems the error $\varepsilon$ of interest is at least the parametric rate $\Omega(n^{-1/2})$, the additional term $\varepsilon'$ will be much smaller than $\varepsilon$. %The assumption that the support size $k$ is at most polynomial in the sample size $n$ is also a mild and somewhat necessary condition: when $k$ is too large compared to $n$, any sound statistical estimation may be impossible (e.g. the condition \eqref{eq:PML_assumption} may break down for meaningful parameters $(\varepsilon, \delta)$), and the PML distribution will essentially be a continuous distribution. For other examples where \eqref{eq:PML_assumption} holds for an arbitrarily large $k$ (such as the support coverage problem considered in \cite{orlitsky2016optimal}), typically a truncation of the domain size to $\mathsf{poly}(n)$ is possible without hurting the estimation performance. 
Finally, by looking at the proof of Theorem \ref{thm:PML_main}, similar results also hold for approximate PML distributions: for any $\beta$-approximate PML with $\beta\in (0,1)$, under condition \eqref{eq:PML_assumption} it holds that
\begin{align*}
\bP(d(p^{\text{\rm PML}, \beta}, p) \ge 2\varepsilon + \varepsilon' )\le \frac{\delta^{1-c}}{\beta}\cdot \exp(c'n^{1/3+c})
\end{align*}
for all $p\in \calM_k$. Therefore, any $\exp(-O(n^{1/3}))$-approximate PML distribution gives the same rate as Theorem \ref{thm:PML_main}. 

Choosing $d(p,q) = |f(p) - f(q)|$ for general symmetric properties $f$, a direct consequence of Theorem \ref{thm:PML_main} is on the estimation performance of the PML plug-in approach in symmetric property estimation. Specifically, the following corollary shows that, for various symmetric property estimation problems, the PML plug-in estimator achieves the optimal sample complexity in a wider error regime than those established in previous works. 
\begin{corollary}\label{cor:functional}
	Let $n$ be the optimal sample complexity for estimating entropy, support size, support coverage, or the distance to uniformity within accuracy $\varepsilon$. If $\varepsilon \gg n^{-1/3}$, then the PML plug-in estimator attains the rate-optimal sample complexity for estimating these properties to accuracy $(2+o(1))\varepsilon$.
\end{corollary}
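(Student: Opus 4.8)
The plan is to derive Corollary~\ref{cor:functional} from Theorem~\ref{thm:PML_main} by a direct specialization. Fix one of the four symmetric properties $f$ (entropy, support size, support coverage, or distance to uniformity) and apply Theorem~\ref{thm:PML_main} with the pseudo-metric $d(p,q)=|f(p)-f(q)|$ and the loss $L(a,p)=|a-f(p)|$; compatibility is immediate from $|f(p)-f(q)|\le|a-f(p)|+|a-f(q)|$, so \eqref{eq:compatible_loss} holds. For each of these properties the literature provides a minimax-rate-optimal estimator (\cite{Jiao--Venkat--Han--Weissman2015minimax,wu2016minimax} for entropy, \cite{wu2019chebyshev} for support size, \cite{orlitsky2016optimal,ZVVKCSLSDM16} for support coverage, and \cite{jiao2018minimax} for distance to uniformity) that is a deterministic function of the histogram of the $n$ samples, hence of the profile; it is therefore an admissible choice for $\widehat{T}$ in the hypothesis \eqref{eq:PML_assumption}. (If a cited estimator is stated under Poissonization, a standard Poissonization/de-Poissonization step transfers the guarantee to the fixed-$n$ model at the cost of a constant factor in $n$.) Let $n=n(\varepsilon)$ denote the optimal sample complexity for accuracy $\varepsilon$.

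The key quantitative input is the high-probability performance of $\widehat{T}$: at the optimal sample size $n$ it satisfies, uniformly over $p$ in the relevant class $\calM$,
\begin{align*}
\bP\big(|\widehat{T}-f(p)|\ge \varepsilon\big)\le \delta,\qquad \delta\le \exp\big(-c_0\,n^{1-\gamma}\varepsilon^2\big)
\end{align*}
for an arbitrarily small constant $\gamma>0$ and some $c_0=c_0(\gamma)>0$; this is either recorded in the cited works or follows from a bounded-difference (McDiarmid) argument applied to the underlying linear or best-polynomial-approximation estimator, whose one-sample sensitivity is $\widetilde{O}(1/n)$ at the optimal sample size. Since $\varepsilon\gg n^{-1/3}$, the definition of $\gg$ supplies a fixed $\eta>0$ with $\varepsilon\ge n^{-1/3+\eta}$ for all large $n$; choosing $\gamma=\eta$ above gives $n^{1-\gamma}\varepsilon^2\ge n^{1/3+\eta}$, hence $\delta\le \exp(-c_0 n^{1/3+\eta})$ for $n$ large.

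Finally, apply Theorem~\ref{thm:PML_main} with $c=\eta/2$ and with $A$ a large constant to be fixed below; it yields an absolute constant $c'=c'(A,\eta)$ with
\begin{align*}
\bP\big(|f(\pml)-f(p)|\ge 2\varepsilon+\varepsilon'\big)\le \delta^{1-\eta/2}\exp\big(c'n^{1/3+\eta/2}\big)\le \exp\big(-\tfrac{c_0}{2}\,n^{1/3+\eta}+c'n^{1/3+\eta/2}\big),
\end{align*}
and since $1/3+\eta>1/3+\eta/2$ the right-hand side tends to $0$ as $n\to\infty$. Here $\varepsilon'=\sup\{d(p,p'):p,p'\in\calM,\ p'\text{ is }(n^{-A},3n^{-A/2})\text{-close to }p\}$, and it remains to check $\varepsilon'=o(\varepsilon)$ property by property, combining continuity of $f$ with the definition of $\calM$: for distance to uniformity $f$ is $\tfrac12$-Lipschitz in $\ell_1$ and $(n^{-A},3n^{-A/2})$-closeness forces $\|p-p'\|_1\le 3n^{-A/2}+kn^{-A}=n^{-\Omega(A)}$ on $\calM_k$; for entropy one additionally uses the modulus of continuity $|H(p)-H(p')|\lesssim \|p-p'\|_1\log(k/\|p-p'\|_1)$; for support size, since $\calM$ consists of distributions whose nonzero masses are at least $1/k$, choosing $A$ with $n^{-A}<1/k$ makes $(n^{-A},3n^{-A/2})$-closeness preserve the support exactly, so $\varepsilon'=0$; and for support coverage a one-line expansion of $\sum_i[(1-p_i')^m-(1-p_i)^m]$ controlled by the mean value theorem gives $\varepsilon'=n^{-\Omega(A)}$ in the relevant range of $m$. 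In every case, taking $A$ large enough yields $\varepsilon'\le n^{-C}$ for any prescribed $C$, hence $\varepsilon'\ll n^{-1/2}\le\varepsilon$ and $2\varepsilon+\varepsilon'=(2+o(1))\varepsilon$. Thus $f(\pml)$ is within $(2+o(1))\varepsilon$ of $f(p)$ with probability $1-o(1)$ using $n=n(\varepsilon)$ samples, which is exactly the claim; the same argument run through the approximate-PML version of Theorem~\ref{thm:PML_main} noted after its statement shows an $\exp(-O(n^{1/3}))$-approximate PML works identically.

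I expect the main obstacle not to be the reduction through Theorem~\ref{thm:PML_main}, which is routine, but the assembling of the exponential concentration bound $\delta\le\exp(-c_0 n^{1-\gamma}\varepsilon^2)$ uniformly over $\calM$ for each of the four optimal estimators. Some of these are stated only with mean-squared-error or constant-success-probability guarantees, so one must invoke sharper concentration results or re-derive them; the delicate point is controlling the one-sample sensitivity of the degree-$\Theta(\log n)$ polynomial corrections used in the best-polynomial-approximation estimators, whose coefficients are large, and verifying that a single coordinate change still perturbs the estimate by only $\widetilde{O}(1/n)$ at the optimal sample size. The remaining obstacle is purely bookkeeping: the case-by-case check that $\varepsilon'=o(\varepsilon)$, which is elementary but genuinely uses the precise specification of $\calM$ for each of the four properties.
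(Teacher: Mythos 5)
Your proof is correct and takes essentially the same approach as the paper: apply Theorem~\ref{thm:PML_main} with $d(p,q)=|f(p)-f(q)|$ to the known minimax-optimal estimators (which enjoy Gaussian-tail concentration $\delta\le\exp(-\Omega(n\varepsilon^2))$), observe that $\varepsilon\gg n^{-1/3}$ makes this small enough to offset the $\exp(c'n^{1/3+c})$ amplification, and verify $\varepsilon'=o(\varepsilon)$ property by property by choosing $A$ large enough. The paper's appendix proof is much terser, deferring the entire reduction and the concentration input to ``the same lines as \cite{acharya2017unified}'' and only explicitly carrying out the four $\varepsilon'$ computations (choosing $A\ge 2$ or $A\ge 3$), whereas you reconstruct the deferred argument in full and correctly flag the estimator concentration as the only nontrivial input; the substance matches.
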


Similar improvements over the results of \cite{hao2019broad} in its applications can also be shown directly, which are omitted for brevity as different applications involve different assumptions. 

Our second result is specialized to some intrinsic properties of the PML distribution without introducing any external properties.
%} \yanjun{(YH: I'd like to convey the message that the PML distribution is \emph{not} designed for estimating symmetric properties, and it is a distribution and has interesting properties itself.)} 
Specifically, we show that the PML distribution is an optimal estimator of the true distribution under the sorted $\ell_1$ distance. To apply the result of Theorem \ref{thm:PML_main}, we first need the following intermediate result on the estimation of sorted distribution. 
\begin{theorem}\label{thm:sorted_main}
	For any $k\ge 1$, $p\in \calM_k$ and $\delta>0$, there exist constants $c,c'>0$ and an estimator $\widehat{\mu}$ depending only on the profile of $n$ i.i.d observations from $p$ such that, if $\varepsilon\ge n^{\delta-1/3}$, then
	\begin{align*}
	\bP\left( k\text{\rm W}_1(\widehat{\mu}, \mu_p) \ge c\sqrt{\frac{k}{n\log n}} + \varepsilon \right) \le \exp(-c'n^{1-\delta}\varepsilon^2). 
	\end{align*}
\end{theorem}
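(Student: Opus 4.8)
\textbf{Proof proposal for Theorem~\ref{thm:sorted_main}.}
The plan is to build $\widehat\mu$ as the minimizer of an explicit, globally defined convex \emph{surrogate loss} assembled from ``smoothed local moments,'' refining the local moment matching philosophy of \cite{han2018local} but deliberately avoiding sample splitting so that the estimator is a stable functional of the histogram/profile and hence concentrates. First I would discretize the probability axis: partition $[0,1]$ into $O(\log n)$ geometrically spaced intervals. On the ``large-probability'' region (roughly $p_i\gtrsim \log n/n$) the empirical frequencies already estimate $p_i$ up to the parametric binomial fluctuation, which will contribute the $\sqrt{k/n}$-type error; the additional $\sqrt{\log n}$ gain over the empirical distribution will come entirely from the ``small-probability'' intervals $I$ of width $\asymp \log n/n$, handled through moments of degree up to $D\asymp\log n$.

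Next I would define, for each small interval $I$ and each degree $d\le D$, the smoothed local moment
\[
  f_{I,d}(p) \;=\; \sum_{i} p_i^{\,d}\;\bP\!\left(\Poi(np_i/2)\in \tfrac n2 I\right),
\]
where the Poisson probability plays the role of a soft version of the hard indicator $\1(p_i\in I)$. These quantities admit \emph{unbiased} estimators $T_{I,d}$ that are explicit linear functionals of the histogram/profile (either via an analysis-only second Poisson split, or directly via factorial moments of the profile counts), with variance kept small because the Poisson weight decays rapidly outside an $O(\sqrt{\log n})$-neighbourhood of $I$. I would then set
\[
  L(\mu,T)\;=\;\sum_{I,d} w_{I,d}\,\bigl|f_{I,d}^{(\mu)}-T_{I,d}\bigr| \;+\;(\text{large-probability contributions}),
\]
with weights $w_{I,d}$ chosen so that for \emph{any} discrete distributions $p,q$ and \emph{any} realization of $T$ one has the deterministic surrogate inequality
\[
  k\,W_1(\mu_p,\mu_q)\;\le\;L(\mu_p,T)+L(\mu_q,T)+ c\sqrt{\tfrac{k}{n\log n}}.
\]
This inequality is proved by dualizing the Wasserstein distance (optimize over $1$-Lipschitz $g$ with $g(0)=0$) and, on each small interval, approximating $g$ by a degree-$D$ Bernstein/Poisson polynomial $\sum_d a_d x^d$ with approximation error $O(|I|/\sqrt D)=O(1/\sqrt{n\log n})$ per element \emph{and} coefficients $a_d$ of size $O(1)$ --- the improved coefficient bound being precisely the local-approximation-plus-smoothing result of Section~\ref{subsec.poisson_approx} that sharpens the $O(n^{1.5+\varepsilon})$ bound of \cite{vinayak2019maximum,vinayak2019optimal}, and being exactly what keeps $\mathrm{Var}(T_{I,d})$ under control. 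Finally, define $\widehat\mu=\arg\min_\mu L(\mu,T)$ over all probability measures on $[0,1]$ (a linear program).

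It then remains to combine the two facts. Applying the surrogate inequality with $q$ replaced by $\widehat\mu$ and using $L(\widehat\mu,T)\le L(\mu_p,T)$ gives $k\,W_1(\widehat\mu,\mu_p)\le 2L(\mu_p,T)+c\sqrt{k/(n\log n)}$, so it suffices to show $L(\mu_p,T)\le \tfrac12\varepsilon$ with probability at least $1-\exp(-c'n^{1-\delta}\varepsilon^2)$. Here $L(\mu_p,T)=\sum_{I,d}w_{I,d}|f_{I,d}(p)-T_{I,d}|+(\text{large-probability terms})$ is a weighted sum of deviations of unbiased statistics from their means; a first-moment/variance computation gives $\bE\, L(\mu_p,T)\lesssim\sqrt{k/(n\log n)}$, and since no sample splitting is used, each $T_{I,d}$ is a fixed linear functional of the histogram, so altering one observation perturbs the relevant histogram coordinates by $\pm1$ and perturbs $L(\mu_p,T)$ by at most a small quantity (polylogarithmic over $n$, thanks to the smooth Poisson weights spreading the change over only $O(\sqrt{\log n})$ intervals and the $O(1)$ coefficient bound). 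A bounded-differences (McDiarmid/Efron--Stein) inequality in the i.i.d.\ model --- or a Poissonize / de-Poissonize detour, which the smoothing makes safe because the increments stay stable --- then yields the claimed sub-Gaussian tail, with a union bound over the $n^{o(1)}$ many pairs $(I,d)$ being negligible.

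\textbf{Main obstacle.} The delicate part is the design of the smoothed local moments and the weights $w_{I,d}$ so that three requirements hold \emph{simultaneously}: (a) $L$ is a genuine surrogate, i.e.\ every $1$-Lipschitz function is well approximated on each scale by the chosen moment functions; (b) the underlying polynomial approximation achieves error $O(1/\sqrt{n\log n})$ \emph{and} coefficients $O(1)$ --- the new approximation-theory trade-off, whose proof (local approximation stitched by the same smoothing trick) is itself nontrivial; and (c) the resulting $T_{I,d}$ have variance and single-sample sensitivity small enough for the $\exp(-c'n^{1-\delta}\varepsilon^2)$ tail. Reconciling this bias--variance--sensitivity triangle across $O(\log n)$ scales, and in particular taming the boundary effects between adjacent intervals (which is the whole reason the Poisson smoothing is introduced in place of hard indicators), is where the bulk of the work lies; the restriction $\varepsilon\gg n^{-1/3}$ and the slight loss from $n$ to $n^{1-\delta}$ in the exponent both enter precisely through this control on the finest probability scale, where the moment-based estimate must still beat the empirical one while its fluctuations remain manageable.
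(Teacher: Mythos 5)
Your high-level strategy matches the paper's proof of Theorem~\ref{thm:sorted_main} (Section~\ref{sec.estimator}) essentially step for step: minimize a globally defined surrogate loss $L$ built from Poisson-smoothed local moments with no sample splitting, prove a deterministic surrogate inequality by dualizing $W_1$ against Jackson-type polynomial approximation, bound $\bE[L]$, and apply a bounded-difference argument (plus de-Poissonization) for the sub-Gaussian tail. Two of your specifics, however, would break a literal implementation.

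First, the interval scheme. You partition $[0,1]$ into ``$O(\log n)$ geometrically spaced intervals'' and handle $p_i\gtrsim\log n/n$ by the empirical frequencies, claiming the $\sqrt{\log n}$ gain comes ``entirely from the small-probability intervals.'' That is not enough: for $p$ supported on $k\asymp n/\log n$ symbols all of size $\Theta(\log n/n)$, your empirical bucket alone contributes $\Theta(1/\sqrt{\log n})$ to the sorted $\ell_1$ error, which exceeds the target $c\sqrt{k/(n\log n)}\asymp 1/\log n$ by a factor $\sqrt{\log n}$. The paper instead applies local moment matching uniformly across $[0,1]$, using intervals $I_m\propto[(m-1)^2,m^2]\cdot\log n/n$ (so $\Theta(\sqrt{n/\log n})$ of them, with width $\widetilde\ell_m\asymp\sqrt{c_1x_m\log n/n}$ calibrated to the Poisson standard deviation). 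With a degree-$D=c_2\log n$ Jackson polynomial this gives the pointwise bound $|f(x)-P_m(x)|\lesssim\sqrt{x/(n\log n)}$ on \emph{every} $I_m$ (eq.~\eqref{eq:approx_error}); integrating this against $\widehat\mu+\mu_p$ via Cauchy--Schwarz is how the $\sqrt{k/(n\log n)}$ term arises. $O(\log n)$ geometric intervals with degree $O(\log n)$ cannot reproduce that bound at moderate scales.

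Second, the coefficient bound. You attribute the variance/sensitivity control to the $O(1)$ coefficient bound of Section~\ref{subsec.poisson_approx} (Theorem~\ref{thm.poisson_approx}), but that result is a \emph{byproduct}, not an ingredient. The proof of Theorem~\ref{thm:sorted_main} uses Lemma~\ref{lemma:coefficients} (the Chebyshev-style monomial coefficient bound on $\widetilde I_m$) to get $|a_{m,d}|\lesssim\widetilde\ell_m^{1-d}n^{9c_2/2}$ as in eq.~\eqref{eq:coefficient_bound}, together with Lemma~\ref{lemma:charlier} to control the Charlier functions $\widetilde g_{d,x_m}$ by $n^{O(c_2)}\widetilde\ell_m^d$; these are $n^{O(c_2)}=n^{o(1)}$, not $O(1)$, and the slack is absorbed by choosing $c_2$ small relative to $\delta$. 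The smoothing idea is what enables McDiarmid via Lemma~\ref{lemma:bounded_diff}; the $O(1)$ Poisson-basis coefficient result is then derived separately by transferring that same smoothing trick to the approximation-theoretic setting, and plays no role in establishing Theorem~\ref{thm:sorted_main}.
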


%\begin{theorem}\label{thm:sorted_main}
%For any $\delta>0$, there exist constants $c,c'>0$ and an estimator $\widehat{\mu}$ depending only on the profile such that for any $n,k\ge 1$ and $\varepsilon\ge n^{\delta-1/3}$, we have
%\begin{align*}
%\bP\left( k\cdot \text{\rm W}_1(\widehat{\mu}, \mu_p) \ge c\sqrt{\frac{k}{n\log n}} + \varepsilon \right) \le \exp(-c'n^{1-\delta}\varepsilon^2), \qquad \forall p\in \calM_k. 
%\end{align*}
%\end{theorem}
The following corollary on the sample complexity is immediate from Theorem \ref{thm:sorted_main}. 
\begin{corollary}\label{cor:sample_complexity}
	For any $k\ge 1$ and $\delta>0$, there exist constants $c,c'>0$ and an estimator $\widehat{\mu}$ depending only on the profile of $n\ge k/(\varepsilon^2\log k)$ i.i.d observations from $p$ such that, if $\varepsilon\ge n^{\delta-1/3}$ and $p\in\calM_k$, then
	$$\bP\left(k\cdot\text{\rm W}_1(\widehat{\mu},\mu_p)\ge c\varepsilon \right) \leq \exp(-c'n^{1-\delta}\varepsilon^2)~.$$ 
\end{corollary}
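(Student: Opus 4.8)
The plan is to derive this corollary directly from Theorem~\ref{thm:sorted_main} by checking that the sample-size hypothesis $n \ge k/(\varepsilon^2\log k)$ forces the ``parametric'' term $\sqrt{k/(n\log n)}$ to be dominated by $\varepsilon$, so that it can simply be absorbed into the accuracy parameter at the cost of enlarging the constant. Concretely, I would take the estimator $\widehat{\mu}$ and the constants $c_0,c'>0$ supplied by Theorem~\ref{thm:sorted_main} for the given $\delta>0$, so that
\begin{align*}
\bP\left( k\text{\rm W}_1(\widehat{\mu}, \mu_p) \ge c_0\sqrt{\frac{k}{n\log n}} + \varepsilon \right) \le \exp(-c'n^{1-\delta}\varepsilon^2)
\end{align*}
holds whenever $\varepsilon \ge n^{\delta-1/3}$, and then show that $c_0\sqrt{k/(n\log n)} + \varepsilon \le c\,\varepsilon$ for a suitable constant $c=c(\delta)>0$. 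By monotonicity of the error event this immediately yields $\bP(k\text{\rm W}_1(\widehat{\mu},\mu_p)\ge c\varepsilon)\le \exp(-c'n^{1-\delta}\varepsilon^2)$, which is exactly the claim.

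The one computation to carry out is the inequality $\sqrt{k/(n\log n)}\lesssim\varepsilon$. Rearranging the hypothesis $n\ge k/(\varepsilon^2\log k)$ gives $n\log k \ge k/\varepsilon^2$. Since we may assume $\varepsilon$ is below an absolute constant (for larger $\varepsilon$ the quantity $k\text{\rm W}_1(\widehat{\mu},\mu_p)$ is bounded by an absolute constant, so the error event becomes void once $c\varepsilon$ exceeds that bound) and $k$ above an absolute constant (the finitely many remaining small-$k$ cases fold into the constants $c,c'$), the hypothesis also forces $n \ge k/(O(1)\cdot\log k)$, hence $\log n \ge \tfrac12\log k$ once $k$ is large enough. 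Combining these, $n\log n \ge \tfrac12\, n\log k \ge \tfrac12\, k/\varepsilon^2$, so $k/(n\log n)\le 2\varepsilon^2$ and therefore $c_0\sqrt{k/(n\log n)}+\varepsilon\le(\sqrt2\,c_0+1)\varepsilon$; taking $c=\sqrt2\,c_0+1$ and keeping $c'$ from Theorem~\ref{thm:sorted_main} finishes the argument.

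I do not expect a genuine obstacle here: the entire statistical content lives in Theorem~\ref{thm:sorted_main}, and this corollary is merely a reparametrization phrased in terms of the information-theoretically optimal sample complexity $n\asymp k/(\varepsilon^2\log k)$. The only point requiring mild care is the comparison $\log n \gtrsim \log k$, which can fail in the boundary regime where $\varepsilon$ is of constant order and $n$ is comparable to $k$; as indicated above, this regime is handled separately using the trivial boundedness of the sorted $\ell_1$ distance together with a harmless adjustment of the constants.
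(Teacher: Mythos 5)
Your proposal is correct and takes the same route the paper intends: the paper itself states the corollary is ``immediate'' from Theorem~\ref{thm:sorted_main}, and there is really only one thing to check, namely that $n\ge k/(\varepsilon^2\log k)$ forces $\sqrt{k/(n\log n)}\lesssim\varepsilon$ so that the parametric term can be absorbed into the constant $c$. Your handling of the boundary case (appealing to the trivial bound $k\,\mathrm{W}_1(\widehat{\mu},\mu_p)\le 2$, which holds for the paper's estimator because of the mean constraint $\int x\,\widehat{\mu}_0(dx)\le k^{-1}$, and folding small $k$ into the constants) is a sensible way to make the $\log n\gtrsim\log k$ step rigorous, and is exactly the kind of housekeeping the paper leaves implicit.
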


We remark that by feeding Theorem \ref{thm:sorted_main} into the previously known competitive analysis in \cite{acharya2017unified}, we immediately achieve the optimality of the PML distribution in estimating the sorted distribution for accuracy parameters $\varepsilon\gg n^{-1/4}$, already a notable improvement over the work \cite{hao2019broad,hao2020profile} which required $\varepsilon\gg n^{-1/8}$.

Let us compare Theorem \ref{thm:sorted_main} with the performance of the empirical distribution $\widehat{p}$: the expected $\ell_1$ distance result in \cite{han2015minimax,kamath2015learning} together with the McDiarmid's inequality gives
\begin{align*}
\bP\left(\|\widehat{p} - p\|_1^< \ge c\sqrt{\frac{k}{n}} + \varepsilon \right) \le \exp(-c'n\varepsilon^2)
\end{align*}
for any $\varepsilon>0$. Hence, the result in Theorem \ref{thm:sorted_main} gives a logarithmic improvement over the empirical estimator on the expected sorted $\ell_1$ loss, also with a near-optimal exponential concentration. Moreover, it was proved in \cite{han2018local} that the minimax risk is $\Theta(\sqrt{k/(n\log n)}) + \widetilde{\Theta}(\sqrt{k/n}\wedge n^{-1/3})$, and therefore the optimal sample complexity of the sorted distribution estimation within error $\varepsilon$ is $\Theta(k/(\varepsilon^2\log k))$ if $\varepsilon\gg n^{-1/3}$, and is $\Theta(k/\varepsilon^2)$ otherwise. In other words, the error assumption $\varepsilon\gg n^{-1/3}$ exactly corresponds to the interesting regime where logarithmic improvements over the empirical estimator are possible, and the estimator in Theorem \ref{thm:sorted_main} achieves the optimal minimax rate in this regime.  

The previous work \cite{han2018local} obtained a similar result to Theorem \ref{thm:sorted_main} and matching minimax lower bounds, but a failure probability only polynomial in $n$, i.e. $O(n^{-5})$, was proved. Moreover, the estimator in \cite{han2018local} requires sample splitting and solves multiple feasibility programs, which is difficult to prove a super-polynomial failure probability and also computationally expensive to solve. In this paper, we construct the estimator $\widehat{\mu}$ by solving a \emph{single} linear program \emph{without} any sample splitting, with a super-polynomially small failure probability from which the applications of Theorem \ref{thm:PML_main} benefit. These new insights also generalize to the estimation of general symmetric properties, and even lead to an improved result in approximation theory. We refer to the full paper for more details. 

We are now ready to prove that the PML distribution is close to the sorted hidden distribution. Recall the choice of the pseudo-metric $d(p,q) = \|p-q\|_1^<$ and compatible loss $L(\mu,p) = k\text{W}_1(\mu,\mu_p)$ in the previous section. The next result is a direct consequence of Theorems \ref{thm:PML_main} and \ref{thm:sorted_main}. 

\begin{theorem}\label{thm:PML_sorted}
	For any $k\ge 1$, $p\in \calM_k$ and $\delta>0$, there exist constants $c,c'>0$ such that given $n$ i.i.d observations from distribution $p$, if $\varepsilon\ge n^{\delta-1/3}$, then
	\begin{align*}
	\bP\left( \|\pml - p\|_1^< \ge c\sqrt{\frac{k}{n\log n}} + \varepsilon \right) \le \exp(-c'n^{1-\delta}\varepsilon^2). 
	\end{align*}
\end{theorem}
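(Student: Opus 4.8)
The plan is to derive Theorem~\ref{thm:PML_sorted} by feeding the estimator of Theorem~\ref{thm:sorted_main} into the competitive bound of Theorem~\ref{thm:PML_main}, using the pseudo-metric $d(p,q)=\|p-q\|_1^<$ and the compatible loss $L(\mu,p)=k\,\text{W}_1(\mu,\mu_p)$ fixed in Section~\ref{sec:prelim}; recall that compatibility follows from~\eqref{eq:sorted_Wasserstein} together with $k\,\text{W}_1(\mu_p,\mu_q)\le k\,\text{W}_1(\mu_p,\mu)+k\,\text{W}_1(\mu,\mu_q)$. Given the target constant $\delta>0$, I would first set the auxiliary slack $\delta_1:=\delta/2$ and apply Theorem~\ref{thm:sorted_main} with $\delta_1$ in place of $\delta$, producing constants $c_1,c_1'>0$ and a profile-based estimator $\widehat\mu$ such that, for all $p\in\calM_k$ and all $\varepsilon_1\ge n^{\delta_1-1/3}$,
\begin{align*}
\bP\!\left(L(\widehat\mu,p)\ge c_1\sqrt{\tfrac{k}{n\log n}}+\varepsilon_1\right)\le \exp\!\left(-c_1' n^{1-\delta_1}\varepsilon_1^2\right).
\end{align*}
This is exactly hypothesis~\eqref{eq:PML_assumption} with error level $\varepsilon_0:=c_1\sqrt{k/(n\log n)}+\varepsilon_1$ and failure probability $\delta_0:=\exp(-c_1' n^{1-\delta_1}\varepsilon_1^2)$.

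Next I would invoke Theorem~\ref{thm:PML_main} with this $d$, $L$, $\widehat T=\widehat\mu$, the pair $(\varepsilon_0,\delta_0)$, its own constant $\delta/2$, and a large absolute constant $A=2$, obtaining $c_2'=c'(2,\delta/2)>0$ with
\begin{align*}
\bP\!\left(\|\pml-p\|_1^<\ge 2\varepsilon_0+\varepsilon'\right)\le \delta_0^{1-\delta/2}\exp\!\left(c_2' n^{1/3+\delta/2}\right),
\end{align*}
where $\varepsilon'=\sup\{\|p-p'\|_1^<:p'\text{ is }(n^{-2},3n^{-1})\text{-close to }p\}$. I would bound $\varepsilon'$ directly from Definition~\ref{def:closeness}: coordinatewise $|p_i-p_i'|\le n^{-2}$ when $p_i\le n^{-2}$ and $|p_i-p_i'|\le 3n^{-1}p_i$ otherwise, so $\varepsilon'\le\|p-p'\|_1\le k n^{-2}+3n^{-1}$. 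It suffices to treat the regime $2c_1\sqrt{k/(n\log n)}<2$, since otherwise the advertised event is empty ($\|\pml-p\|_1^<\le 2$ always); in that regime $k=O(n\log n)$, hence $\varepsilon'=O(n^{-1}\log n)=o(n^{\delta-1/3})$, so $\varepsilon'\le\varepsilon/2$ for $n$ large. Taking $\varepsilon_1:=\varepsilon/4$ — legitimate whenever $\varepsilon\ge n^{\delta-1/3}$ and $n$ is large, because $n^{\delta-1/3}/n^{\delta_1-1/3}=n^{\delta/2}\to\infty$ — gives $2\varepsilon_0+\varepsilon'=2c_1\sqrt{k/(n\log n)}+\varepsilon/2+\varepsilon'\le 2c_1\sqrt{k/(n\log n)}+\varepsilon$, so the threshold already has the required form with leading constant $c:=2c_1$.

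It remains to control $\delta_0^{1-\delta/2}\exp(c_2' n^{1/3+\delta/2})=\exp(-(1-\delta/2)c_1' n^{1-\delta/2}\varepsilon_1^2+c_2' n^{1/3+\delta/2})$, and this is where $\varepsilon\ge n^{\delta-1/3}$ is genuinely used. Writing $T:=(1-\delta/2)c_1' n^{1-\delta/2}(\varepsilon/4)^2$ and splitting $T=T/2+T/2$: using $\varepsilon^2\ge n^{2\delta-2/3}$ one has $T/2\ge (1-\delta/2)c_1' n^{1/3+3\delta/2}/32\ge c_2' n^{1/3+\delta/2}$ for $n$ large (a genuine polynomial cushion of $n^{\delta}$ over the additive penalty), while $T/2=(1-\delta/2)c_1' n^{1-\delta/2}\varepsilon^2/32\ge c' n^{1-\delta}\varepsilon^2$ with $c':=(1-\delta/2)c_1'/32$ since $n^{\delta/2}\ge1$. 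Hence $-T+c_2' n^{1/3+\delta/2}\le -c' n^{1-\delta}\varepsilon^2$, which yields the claim for all $n$ exceeding a constant depending on $\delta$; the finitely many remaining $n$ are handled by enlarging $c$ (emptying the event, as $\|\pml-p\|_1^<\le 2$). The main obstacle is precisely this last bookkeeping at the boundary $\varepsilon\asymp n^{-1/3}$, where the Gaussian exponent from Theorem~\ref{thm:sorted_main} is only of order $n^{1/3+\delta_1}$ — comparable to the $\exp(O(n^{1/3+c}))$ penalty of Theorem~\ref{thm:PML_main} — so one must run Theorem~\ref{thm:sorted_main} with a slack $\delta_1$ strictly below $\delta$ and pick the constant in Theorem~\ref{thm:PML_main} strictly below $\delta$, creating the polynomial gap that lets everything collapse into $n^{1-\delta}\varepsilon^2$; everything else is a routine chaining of the two theorems, as the excerpt already indicates.
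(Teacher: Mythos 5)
Your proposal is correct and follows exactly the route the paper indicates—the paper itself states Theorem~\ref{thm:PML_sorted} is ``a direct consequence of Theorems~\ref{thm:PML_main} and~\ref{thm:sorted_main}'' with the key observation that the $\exp(-c'n^{1-\delta}\varepsilon^2)$ tail from the estimator offsets the $\exp(c'n^{1/3+c})$ PML penalty once $\varepsilon\ge n^{\delta-1/3}$. Your write-up is a careful elaboration of that one-line argument: the choice of the compatible loss $L(\mu,p)=k\text{W}_1(\mu,\mu_p)$, running Theorem~\ref{thm:sorted_main} at slack $\delta_1=\delta/2$ to open a polynomial gap, the $\varepsilon'\le kn^{-2}+3n^{-1}$ bound (with the harmless $k=O(n\log n)$ reduction), and the $T/2+T/2$ split to absorb both the chaining penalty and the $\delta^{1-c}$ loss—all match what the paper's implicit proof must do, and the bookkeeping checks out.
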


%\begin{theorem}\label{thm:PML_sorted}
%For any $\delta>0$, there exist constants $c,c'>0$ such that for any $n,k\ge 1$ and $\varepsilon\ge n^{\delta-1/3}$, we have
%\begin{align*}
%\bP\left( \|\pml - p\|_1^< \ge c\sqrt{\frac{k}{n\log n}} + \varepsilon \right) \le \exp(-c'n^{1-\delta}\varepsilon^2), \qquad \forall p\in \calM_k. 
%\end{align*}
%\end{theorem}

Since $\varepsilon\ge n^{\delta-1/3}$, the failure probability $\exp(-c'n^{1-\delta}\varepsilon^2)$ of the optimal estimator $\widehat{\mu}$ in Theorem \ref{thm:sorted_main} is small enough to offset the amplification factor $\exp(c'n^{1/3+c})$ for the PML distribution in Theorem \ref{thm:PML_main}. The following corollary on the sample complexity of PML distribution in estimating sorted $\ell_1$ distance is immediate from Theorem \ref{thm:PML_sorted}. 

\begin{corollary}\label{cor:PML_sample_complexity}
	For any $k\ge 1$, $p\in \calM_k$ and $\delta>0$, there exist constants $c,c'>0$ such that given $n\ge k/(\varepsilon^2\log k)$ i.i.d observations from distribution $p$, if $\varepsilon\ge n^{\delta-1/3}$, then
	\begin{align*}
	\bP\left( \|\pml - p\|_1^< \ge  c\varepsilon\right) \le \exp(-c'n^{1-\delta}\varepsilon^2)~. 
	\end{align*} 
\end{corollary}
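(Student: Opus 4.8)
Corollary~\ref{cor:PML_sample_complexity} follows instantly from Theorem~\ref{thm:PML_sorted}: when $n\ge k/(\varepsilon^2\log k)$ one has $k/(n\log n)=O(\varepsilon^2)$ (clear if $k\le n$ since $\log k\le\log n$; and if $k>n$ the inequality $n\ge k/(\varepsilon^2\log k)$ already forces $\log n\gtrsim\log k$), so the threshold $c\sqrt{k/(n\log n)}+\varepsilon$ of Theorem~\ref{thm:PML_sorted} is $O(\varepsilon)$ while the right-hand side is unchanged. Thus the substantive task is Theorem~\ref{thm:PML_sorted}, which the plan is to obtain by feeding the estimator of Theorem~\ref{thm:sorted_main} into the competitive bound of Theorem~\ref{thm:PML_main}. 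Instantiate the abstract framework with the pseudo-metric $d(q,q')=\|q-q'\|_1^<$ and the compatible loss $L(\mu,q)=k\,\text{W}_1(\mu,\mu_q)$ (compatibility was recorded in Section~\ref{sec:prelim}: $\|q-q'\|_1^<=k\,\text{W}_1(\mu_q,\mu_{q'})\le L(\mu,q)+L(\mu,q')$ by \eqref{eq:sorted_Wasserstein} and the triangle inequality for $\text{W}_1$). Since the sorted $\ell_1$ distance of two probability vectors is at most $2$, the asserted inequality is trivial unless $c\sqrt{k/(n\log n)}<2$, so we may assume $k\le C_0\,n\log n$ for a constant $C_0$ depending only on the final constant $c$; this will make the extra term $\varepsilon'$ of Theorem~\ref{thm:PML_main} polynomially negligible.

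Fix the target exponent parameter $\delta>0$ and apply Theorem~\ref{thm:sorted_main} with $\delta/2$ in place of $\delta$: this furnishes constants $a,a'>0$ and a single profile-based estimator $\widehat\mu$ (not depending on $\varepsilon$ or $p$) with
\[
\bP\Big(k\,\text{W}_1(\widehat\mu,\mu_p)\ge a\sqrt{\tfrac{k}{n\log n}}+s\Big)\le \exp(-a'n^{1-\delta/2}s^2)
\]
for all $p\in\calM_k$ and $s\ge n^{\delta/2-1/3}$. Now invoke Theorem~\ref{thm:PML_main} with $\widehat T=\widehat\mu$, $\varepsilon_{\mathrm{PML}}=a\sqrt{k/(n\log n)}+s$, $\delta_{\mathrm{PML}}=\exp(-a'n^{1-\delta/2}s^2)$, the fixed constant $A=2$, and the small constant $c=\delta/4$: this yields $b=b(2,\delta/4)>0$ and
\[
\bP\Big(\|\pml-p\|_1^<\ge 2\varepsilon_{\mathrm{PML}}+\eta\Big)\le \delta_{\mathrm{PML}}^{\,1-\delta/4}\exp(bn^{1/3+\delta/4}),
\]
where $\eta$ is the supremum of $\|q-q'\|_1^<$ over $q,q'\in\calM_k$ with $q'$ being $(n^{-2},3n^{-1})$-close to $q$. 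The quantity $\eta$ is bounded directly from Definition~\ref{def:closeness}: coordinates with $q_i=0$ contribute $0$; the at most $k$ coordinates with $0<q_i\le n^{-2}$ each contribute $\le 2n^{-2}$; and $\sum_{i:\,q_i>n^{-2}}(q_i-q_i')\le 3n^{-1}\sum_i q_i=3n^{-1}$; hence $\eta\le\|q-q'\|_1\le 3n^{-1}+2kn^{-2}=O(n^{-1}\log n)\ll n^{\delta-1/3}$. Taking $s=\varepsilon/3$ (valid since $\varepsilon/3\ge n^{\delta-1/3}/3\ge n^{\delta/2-1/3}$ once $n$ exceeds a $\delta$-dependent constant) gives $2\varepsilon_{\mathrm{PML}}+\eta\le 2a\sqrt{k/(n\log n)}+\tfrac23\varepsilon+\tfrac13\varepsilon$, so the event $\{\|\pml-p\|_1^<\ge 2a\sqrt{k/(n\log n)}+\varepsilon\}$ lies inside the event above.

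It remains to verify that the right-hand side is $\le\exp(-c'n^{1-\delta}\varepsilon^2)$, the one place the hypothesis $\varepsilon\ge n^{\delta-1/3}$ is used essentially. With $s=\varepsilon/3$,
\[
\delta_{\mathrm{PML}}^{\,1-\delta/4}\exp(bn^{1/3+\delta/4})=\exp\Big(-\tfrac{a'(1-\delta/4)}{9}\,n^{1-\delta/2}\varepsilon^2+bn^{1/3+\delta/4}\Big),
\]
and $\varepsilon^2\ge n^{2\delta-2/3}$ gives $n^{1-\delta/2}\varepsilon^2\ge n^{1/3+3\delta/2}$, which strictly dominates $n^{1/3+\delta/4}$. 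Hence for $n$ beyond a $\delta$-dependent constant the additive term is absorbed into half of the first term, the bound becomes $\exp(-\tfrac{a'(1-\delta/4)}{18}n^{1-\delta/2}\varepsilon^2)\le\exp(-c'n^{1-\delta}\varepsilon^2)$ with $c'=a'(1-\delta/4)/18$ (using $n^{1-\delta/2}\ge n^{1-\delta}$), and the finitely many small $n$ are absorbed into $c'$. Renaming $c=2a$ completes Theorem~\ref{thm:PML_sorted}, and hence the corollary.

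I do not anticipate a genuine obstacle here, since both ingredients are already in hand and the argument is essentially an assembly; the one delicate point is the exponent bookkeeping just described, namely that the amplification factor $\exp(c'n^{1/3+c})$ of Theorem~\ref{thm:PML_main} must sit strictly below the failure exponent of the sorted-distribution estimator. This is precisely why the free constant $c$ in Theorem~\ref{thm:PML_main} must be taken $<\delta$, and why the regime $\varepsilon\gg n^{-1/3}$ — which forces the estimator of Theorem~\ref{thm:sorted_main} to fail with probability $\exp(-\omega(n^{1/3}))$ — is indispensable. A secondary routine point is the verification that $\varepsilon'$ in Theorem~\ref{thm:PML_main} is polynomially negligible for the sorted $\ell_1$ metric, i.e. the bound on $\eta$ above, which in turn is why we first reduce to $k=O(n\log n)$.
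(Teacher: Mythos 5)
Your proposal is correct and follows exactly the route the paper intends: the paper presents Corollary~\ref{cor:PML_sample_complexity} as ``immediate from Theorem~\ref{thm:PML_sorted},'' which in turn is stated to be a ``direct consequence of Theorems~\ref{thm:PML_main} and~\ref{thm:sorted_main},'' and you have simply filled in the bookkeeping that the paper leaves implicit (the choice $A=2$ making $\varepsilon'$ negligible, the choice of $c<\delta$ so that the $\exp(c'n^{1/3+c})$ amplification is dominated, and the verification that $n\ge k/(\varepsilon^2\log k)$ forces $\sqrt{k/(n\log n)}=O(\varepsilon)$). There is no gap and no divergence from the paper's argument.
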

Corollary \ref{cor:PML_sample_complexity} shows that, over the entire interesting error regime  $\varepsilon\gg n^{-1/3}$ where the classical empirical distribution is sub-optimal, the PML distribution attains the optimal sample complexity $n=\Omega(k/(\varepsilon^2\log k))$. In other words, either the empirical distribution or the PML distribution is minimax rate-optimal in estimating sorted discrete distributions under all parameter configurations. 

%!TEX root = main.tex

\section{Refined Analysis of PML}\label{sec.PML}
This section is devoted to the proof of Theorem \ref{thm:PML_main}. Recall that the error probability $\delta\exp(3\sqrt{n})$ in \cite{acharya2017unified} follows from a union bound over all the possible profiles and the cardinality bound in Lemma \ref{lemma:profile}. To improve over this bound, we provide some useful form of ``continuity" properties of the PML distribution and show that each PML distribution can be approximated by a smaller set of distributions. Hence, using the ``covering" of the smaller set of distributions, a union bound over the covering leads to an improved error probability. Finally, using a chaining argument (or a hierarchy of coverings) as in the literature on uniform concentration inequalities \cite{dudley1967sizes,talagrand2006generic}, we obtain the desired upper bound as in Theorem \ref{thm:PML_main}. 

%\subsection{Preliminaries}\label{subsec:preliminary}
Toward this direction, we now provide the key continuity lemma for the space of PML estimators. This continuity lemma shows that the set of all PML distributions $p_\phi$ with respect to $\phi\in \Phi_n$ can be approximated (or covered) by a smaller set of distributions up to certain approximation factors that are formally stated next. 
\begin{lemma}[Continuity lemma]\label{lemma:continuity}
Let $\constA\ge 2, c_0\in (0,1)$, $r,s$ be arbitrary constants with $0<s<r\le 1/2$, and $\calM_0\subseteq \calM$ be the set of probability distributions with minimum non-zero probability mass at least $1/(2n^{\constA})$. Then there exists a constant $c=c(\constA,c_0,r,s)>0$ and a subset of probability distributions $\calN \subseteq \calM_0$ such that 
\begin{enumerate}
	\item $|\calN|\le \exp(cn^r\log n)$; 
	\item for any $p\in \calM_0$, there exists some $q\in \calN$ such that for all $S\subseteq \Phi_n$, it holds that
	\begin{align}
	 \nprob{p}{S} &\ge \nprob{q}{S}^{1/(1 - c_0n^{-s})}\exp\left(-cn^{1-2r+s} \right), \label{eq:inequality_p_over_q}\\
	  \nprob{q}{S} &\ge \nprob{p}{S}^{1/(1 - c_0n^{-s})}\exp\left(-cn^{1-2r+s} \right). \label{eq:inequality_q_over_p}
	\end{align} 
\end{enumerate}
\end{lemma}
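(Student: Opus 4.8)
The plan is to realize the two displayed inequalities as a control on a single R\'enyi divergence between the profile laws, reduce that (by data processing) to a quantization problem for the distributions themselves, and build $\calN$ accordingly. Set $\alpha:=n^s/c_0$, chosen so that $\tfrac{\alpha-1}{\alpha}=1-c_0n^{-s}$. For any $p,q\in\calM$ and any $S\subseteq\Phi_n$, H\"older's inequality (change of measure) gives $\bP(q,S)\le\bP(p,S)^{(\alpha-1)/\alpha}\exp\!\big(\tfrac{\alpha-1}{\alpha}D_\alpha(\bP(q,\cdot)\,\|\,\bP(p,\cdot))\big)$, where $D_\alpha(\cdot\,\|\,\cdot)$ denotes the order-$\alpha$ R\'enyi divergence; equivalently $\bP(p,S)\ge\bP(q,S)^{1/(1-c_0n^{-s})}\exp(-D_\alpha(\bP(q,\cdot)\,\|\,\bP(p,\cdot)))$, and symmetrically with $p$ and $q$ exchanged. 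Hence it suffices to produce $\calN\subseteq\calM_0$ with $|\calN|\le\exp(cn^r\log n)$ such that every $p\in\calM_0$ has some $q\in\calN$ with $\max\{D_\alpha(\bP(q,\cdot)\,\|\,\bP(p,\cdot)),\,D_\alpha(\bP(p,\cdot)\,\|\,\bP(q,\cdot))\}\le c\,n^{1-2r+s}$. Since the profile is a deterministic function of the i.i.d.\ sample, data processing bounds each side by $D_\alpha(q^{\otimes n}\,\|\,p^{\otimes n})=nD_\alpha(q\|p)$ whenever $p,q$ have the same support; it will also be convenient to pass to the Poissonized profile model, where the occupancy counts $h_j\sim\Poi(np_j)$ are independent, and where de-Poissonization only costs an additive $O(n^{-s}\log n)$ in $D_\alpha$ (because on $\Phi_n$ the Poissonized profile law is a $p$-independent scalar multiple of $\bP(p,\cdot)$).

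For the construction, fix the multiplicative grid $G=\{(1+n^{-r})^{-j}:0\le j\le N\}$ with $N=\lceil An^r\ln n\rceil$, which covers $[\tfrac1{2n^A},1]$, and given $p\in\calM_0$ snap each non-zero $p_i$ to the nearest point of $G$, renormalizing afterwards; every coordinate is then perturbed multiplicatively by $O(n^{-r})$. Using the elementary bound $D_\alpha(u\|v)\lesssim\alpha\sum_i v_i(u_i/v_i-1)^2$, valid when $\alpha\max_i|u_i/v_i-1|\le1$ (which holds here since $\alpha n^{-r}=n^{s-r}/c_0\to0$ as $s<r$), this snapping already costs only $D_\alpha\lesssim\alpha n^{-2r}=n^{-2r+s}/c_0$ in both directions, hence $nD_\alpha\lesssim n^{1-2r+s}$. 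The remaining difficulty is cardinality: the snapped distribution is specified by a multiplicity vector $(m_j)_{j\le N}$ with $\sum_jm_j\ell_j\approx1$, and there are $\exp(\widetilde O(n^r))$ of those --- just short of the target. To reach $\exp(cn^r\log n)$ one treats ``heavy'' symbols ($p_i\ge n^{-r}$, of which there are at most $n^r$) and ``light'' symbols separately: the heavy multiplicity pattern is one of at most $\binom{N+n^r}{n^r}=\exp(O(n^r\log n))$ possibilities, while the light part of the measure is replaced by a member of a pre-chosen family $\calF$ with $|\calF|\le\exp(O(n^r\log n))$, e.g.\ a measure on $O(n^r)$ atoms of $G$ whose masses match, up to $\mathrm{poly}(n)$ precision, a suitable list of $O(n^r)$ ``Poisson moments'' of the light part of $p$; taking $\calN$ to be all resulting distributions gives the claimed bound on $|\calN|$.

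It remains to verify the R\'enyi bound for the light part, and this is where the real work lies. In the Poissonized model the profile is the independent sum $\phi=\phi^{\mathrm{heavy}}+\phi^{\mathrm{light}}$, so $D_\alpha(\bP(q,\cdot)\,\|\,\bP(p,\cdot))$ splits (up to the de-Poissonization error) into a heavy term, handled exactly as in the previous paragraph via data processing to $\sum_{\mathrm{heavy}\ j}D_\alpha(\Poi(nq_j)\|\Poi(np_j))\lesssim\alpha n\cdot n^{-2r}$, and a light term. For the light term, data processing to the single-letter divergence is useless --- altering the multiplicity of a tiny-probability symbol changes the support and renders $nD_\alpha(q\|p)$ infinite --- so one argues directly: the light symbols contribute, up to negligible error, a vector of independent Poissons with means $\Lambda_i=\sum_{\mathrm{light}\ j}e^{-np_j}(np_j)^i/i!$, replacing the light measure by a moment-matched surrogate perturbs each $\Lambda_i$ in a controlled way, and summing the per-coordinate estimates $D_\alpha(\Poi(\Lambda_i')\|\Poi(\Lambda_i))\lesssim\alpha(\Delta\Lambda_i)^2/\Lambda_i$ keeps the total below $c\,n^{1-2r+s}$ once enough moments are matched. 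The main obstacle is precisely this light-symbol analysis: one must \emph{simultaneously} keep the number of quantized light-parts at $\exp(O(n^r\log n))$ --- which forbids recording the multiplicity at each of the $\Theta(n^r\log n)$ fine grid levels --- and keep the resulting \emph{two-sided} R\'enyi divergence of the profile laws below $n^{1-2r+s}$, which the lossy data-processing bound cannot supply. Making quantitative the heuristic that a profile of merely $n$ samples cannot resolve the fine structure of probabilities well below $1/n$, in the strong two-sided R\'enyi sense required here, is the technically delicate heart of the lemma; the rest is bookkeeping, with all constants absorbed into $c=c(A,c_0,r,s)$.
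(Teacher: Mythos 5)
You correctly identify the core technical machinery: control the profile laws via a \renyi--type divergence (the paper uses $\chi^m(P\|Q)=\bE_Q[(dP/dQ)^m]=\exp((m-1)D_m(P\|Q))$, which is your $D_\alpha$ in disguise), reduce by data processing from the i.i.d.\ or Poissonized histogram law down to the profile law, and quantize $p\in\calM_0$ onto a multiplicative grid of ratio $1+n^{-r}$ so that the coordinate-wise ratio is $1+O(n^{-r})$, giving $nD_\alpha(q\|p)=O(\alpha n^{1-2r})=O(n^{1-2r+s})$ after tensorization. Your non-Poissonized route through $D_\alpha(q^{\otimes n}\|p^{\otimes n})$ is a clean variant of the paper's Poissonized $\chi^m$ computation, and both yield the displayed inequalities.

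The genuine gap is the cardinality step, where you invent an obstruction the paper never faces and then leave it unresolved. After snapping $q_j=\max\{c\in\calC: c\le p_j\}$, the measure $q$ has the \emph{same support} as $p$ (every $p_j\ge 1/(2n^A)$ snaps to a grid point $\ge 1/(2n^A)>0$), its masses live in a grid of size $M=\Theta(n^r\log n)$, and the total-mass constraint forces the multiplicity at each grid level to be at most $2n^A$. The naive count is therefore $|\calN|\le(2n^A)^M=\exp(O(n^r(\log n)^2))$. This exceeds the stated $\exp(cn^r\log n)$ only by a single $\log n$, which is immaterial to the downstream chaining (the covering size enters the exponent as $n^{r_{m-1}-s_m}\log n$, still strictly below $n^{1/3+c}$ after absorbing an extra $\log n$), and the paper simply uses this naive count. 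No heavy/light split is needed and no moment matching is needed. Your proposed light-part surrogate construction is not only superfluous: you explicitly concede that controlling the two-sided profile \renyi divergence for a moment-matched surrogate is ``the technically delicate heart of the lemma'' and leave it as a heuristic. The difficulty you flag --- that changing the multiplicity structure of light symbols can break coordinate-wise support and blow up $nD_\alpha(q\|p)$ --- is real for a moment-matching construction, which is precisely why the simpler snapping (which preserves support coordinate-by-coordinate) is the right construction, and why the extra sub-problem you pose does not occur in a complete proof.
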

There are two tuning parameters $r$ and $s$ in Lemma \ref{lemma:continuity}, where the choice of $r$ provides a trade-off between the cardinality of $\calN$ and the multiplicative factors in \eqref{eq:inequality_p_over_q}, \eqref{eq:inequality_q_over_p}, while $s$ corresponds to the trade-off between the exponent of the target probability and the multiplicative factors. Upon choosing an appropriate set of distributions $\calN$, Lemma \ref{lemma:continuity} could be established by a brute-force computation on the likelihood ratio. However, here we adopt a cleaner proof which relies on a Poissonization and the computation of the $\chi^m$-divergence for some large parameter $m$, where $\chi^m(P\|Q) = \bE_Q[(dP/dQ)^m]$ is a generalization of the $\chi^2$-divergence (up to an additive constant $1$). The details are postponed to the appendix, and the usage of the $\chi^m$-divergence is motivated by an analogous covering result under the KL divergence in \cite{acharya2012tight}.

We are ready to bound the failure probability of the PML based estimator and the following lemma will be useful to handle some technical details.
\begin{lemma}[A slight variant of Lemma 4.1 in \cite{charikar2019efficient}]\label{lemma:min_prob}
	Fix a constant $\constA\ge 2$. For any probability distribution $p\in \calM$ and any given profile $\phi\in \Phi_n$, there exists another probability distribution $p'\in \calM$ with minimum non-zero probability mass at least $1/(2n^{\constA})$ such that $p'$ is $(n^{-\constA},3n^{-\constA/2})$-close to $p$ (cf. Definition \ref{def:closeness}), and
	\begin{align*}
	\nprob{p'}{\phi} \ge e^{-6}\cdot \nprob{p}{\phi}.  
	\end{align*}
\end{lemma}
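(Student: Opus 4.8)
The plan is to construct $p'$ by editing only the small-probability symbols of $p$, then to compare $\nprob{p'}{\phi}$ with $\nprob{p}{\phi}$ through the explicit formula for profile probabilities. We may assume $\nprob{p}{\phi}>0$. Call a symbol \emph{bulk} if $p_i>n^{-A}$ and \emph{small} if $0<p_i\le n^{-A}$; bulk symbols already carry mass $\ge 1/(2n^A)$, so only the small ones need work. The construction: leave every bulk mass in place but allow it to be scaled by a common factor $c\in[1/(1+\beta),1]$ with $\beta=3n^{-A/2}$; and replace the small masses of $p$ by a new collection of values, supported on a \emph{subset} of the small indices, each value in $[1/(2n^A),n^{-A}]$, arranged so that the number of retained small symbols is at least $\min\{n,\#\{\text{small symbols of }p\}\}$, with the total small mass changed by a controlled amount that the bulk scaling absorbs. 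Existence of such a reassignment is a routine bin-packing exercise — pack items of size $\le n^{-A}$ into bins of capacity $n^{-A}$, each filled to at least half, possibly stealing a little mass to lift the last bin — and the reason the leftover is always absorbable is that $A\ge 2$ makes the scale $1/(2n^A)$ negligible next to the available bulk slack $\tfrac{\beta}{1+\beta}\cdot(\text{bulk mass})$. By construction $p'$ has minimum non-zero mass $\ge 1/(2n^A)$ and is $(n^{-A},3n^{-A/2})$-close to $p$: small coordinates stay in $[0,n^{-A}]$, bulk coordinates only decrease and by at most a factor $1+\beta$, and zero coordinates stay zero.

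For the probability comparison, write $\nprob{p}{\phi}=\frac{n!}{\prod_i (i!)^{\phi_i}}\sum_{\psi\le\phi}P^{\mathrm{blk}}(p;\psi)\,P^{\mathrm{sm}}(p;\phi-\psi)$, where $P^{\mathrm{blk}}(p;\psi)$ and $P^{\mathrm{sm}}(p;\eta)$ sum the monomials $\prod_j p_j^{a_j}$ over all ways of matching disjoint bulk, resp.\ small, symbols to the (sub-)profiles $\psi,\eta$. Since the bulk symbol set is unchanged and every monomial in $P^{\mathrm{blk}}(p;\psi)$ has the same total degree $\sum_i i\psi_i\le n$, we get $P^{\mathrm{blk}}(p';\psi)=c^{\sum_i i\psi_i}P^{\mathrm{blk}}(p;\psi)\ge(1+\beta)^{-n}P^{\mathrm{blk}}(p;\psi)\ge e^{-3}P^{\mathrm{blk}}(p;\psi)$, using $n\beta\le 3n^{1-A/2}\le 3$. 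Hence it suffices to prove $P^{\mathrm{sm}}(p';\eta)\ge e^{-3}P^{\mathrm{sm}}(p;\eta)$ for every $\eta\le\phi$, and then summing over $\psi$ yields $\nprob{p'}{\phi}\ge e^{-6}\nprob{p}{\phi}$ after tracking the constants. When the construction retains \emph{all} small symbols of $p$ — which it does whenever there are at most $n$ of them — this is immediate: each small mass is only raised, so every monomial of $P^{\mathrm{sm}}(p;\eta)$ is dominated term-by-term, giving $P^{\mathrm{sm}}(p';\eta)\ge P^{\mathrm{sm}}(p;\eta)$.

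The main obstacle is the remaining case, in which $p$ has more than $n$ small symbols and the construction must merge some of them, so $P^{\mathrm{sm}}(p';\eta)$ is a polynomial in \emph{fewer} variables and term-by-term domination fails. Here one has to show that $P^{\mathrm{sm}}(p;\eta)$ is, up to a bounded multiplicative factor, insensitive to how the total small mass $S$ is split, exploiting that each small mass is $\le n^{-A}$ and that $\eta$, arising from $n$ samples, involves at most $n$ sampled positions: one compares the relevant elementary-symmetric-type sums against $S^{t}/t!$ (for $t\le n$ small symbols used), bounds the "sampling without replacement" correction, and separately controls the contribution of small symbols occurring $\ge 2$ times by $n^{-2A}$-type factors. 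The delicate points are arranging the construction so that $P^{\mathrm{sm}}(p';\eta)$ never vanishes while $P^{\mathrm{sm}}(p;\eta)>0$ (this is exactly why we insist on retaining at least $\min\{n,\#\{\text{small symbols}\}\}$ of them), and keeping the accumulated absolute constants small enough to close at $e^{6}$ rather than a harmless $e^{O(1)}$. This is precisely the bookkeeping carried out in Lemma 4.1 of \cite{charikar2019efficient}, which we follow with the minor modifications needed for the present normalization.
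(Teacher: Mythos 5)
The paper does not actually prove this lemma: it records it as ``a slight variant of Lemma~4.1 in \cite{charikar2019efficient}'' and simply cites that reference, with no proof in its appendix. Your proposal ultimately does the same, just with more scaffolding: you lay out the bulk/small dichotomy, give the construction, correctly establish the bulk-scaling bound $(1+\beta)^{-n}\ge e^{-n\beta}\ge e^{-3}$ from $n\beta=3n^{1-A/2}\le 3$, handle the trivial case where every small symbol can be retained (term-by-term domination), and then, for the only genuinely hard step — showing that merging and thinning the sub-$n^{-A}$ masses changes $P^{\mathrm{sm}}(\cdot;\eta)$ by at most a constant factor — you explicitly defer to the bookkeeping in \cite{charikar2019efficient}. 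So structurally your route coincides with the paper's.

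Two places where the sketch, read on its own, is weaker than it appears and would need care if you were to turn it into a self-contained proof. First, retaining at least $\min\{n,\#\text{small}\}$ small symbols is exactly the condition for $P^{\mathrm{sm}}(p';\eta)$ not to vanish, but it is not by itself enough for the multiplicative bound: the relevant ratio of elementary-symmetric sums behaves like $\prod_{j<t}(1-j/m')\cdot(S'/S)^t$, and with $m'\approx t\approx n$ and $S'=S$ this is $\exp(-\Omega(n))$. What saves the bound is that $A\ge 2$ lets you steal an amount of bulk mass of order $\beta\approx 3n^{-A/2}\ge t/(2n^A)$, so the factor $(S'/S)^t$ cancels the $\exp(-t^2/m')$ loss; this transfer/cancellation is the substantive part of the lemma and your write-up only gestures at it. Second, the ``bin-pack items of size $\le n^{-A}$ into bins of capacity $n^{-A}$, steal a little mass to lift the last bin'' picture, as stated, produces about $2Sn^A$ bins, which is less than $n$ whenever $S<n^{1-A}/2$; in that regime you must use the stolen bulk mass to \emph{create} additional bins (and the budget indeed permits it, precisely because $A\ge 2$), not merely top off the last one. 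Neither point changes the verdict that you are following \cite{charikar2019efficient} as the paper does, but both are places where the informal description could mislead.
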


The organization of this section is as follows. In Section \ref{subsec:warmup}, we provide a warmup example using a one-stage covering, which reflects our main ideas and leads to a slightly weaker result compared to Theorem \ref{thm:PML_main}. Then a general chaining argument is provided in Section \ref{subsec:chaining}. 

\subsection{Warmup: One-stage Covering}\label{subsec:warmup}
Here we show how the application of Lemma \ref{lemma:continuity} gives a failure probability of $\delta^{1-c}\exp(c'n^{3/8}\log n)$ for the PML based plug-in estimator. This result reflects the main idea of our proof and already improves over the failure probability of $\delta\exp(3\sqrt{n})$ in \cite{acharya2017unified}. Later in the next subsection, we show that the exponent approaches to $n^{1/3}$ (therefore Theorem \ref{thm:PML_main}) by repeating this procedure several times. 

Fix any probability distribution $p\in \calM$. Define the set of ``good" profiles with respect to distribution $p$ as follows
\begin{align*}
G = \{\phi\in \Phi_n: L(\widehat{T}(\phi), p ) \le \varepsilon \}~,
\end{align*}
these are thee profiles on which the estimator $\widehat{T}$ incurs a small loss under $p$. The following lemma shows that the estimator $\widehat{T}$ also incurs a small loss on all the distributions $q$ that put a large probability mass on $G$. Therefore the value of $d(q, p)$ is small as well.

%for any distribution $q\in \calM$ whose induced distribution  ${\bf q}$ puts a large probability mass on $G$, then the distribution $q$ has a small error.

% and let $p_\phi$ be the PML distribution upon observing $\phi\sim {\bf p}$. 
\begin{lemma}\label{lemma:PML}
Under the assumption of Theorem \ref{thm:PML_main}, if $\nprob{q}{G}>\delta$ for some $q\in \calM$, then $d(q, p)\le 2\varepsilon$. 
\end{lemma}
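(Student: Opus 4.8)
The plan is to exploit the defining property of the PML distribution together with the assumed high-probability guarantee on $\widehat{T}$ and the compatibility of the loss $L$ with the pseudo-metric $d$. First I would observe that since $\widehat{T}$ depends only on the profile, the assumption \eqref{eq:PML_assumption} applied to the distribution $q$ reads
\[
\bP(L(\widehat{T}, q) \ge \varepsilon) = \sum_{\phi \in \Phi_n} \nprob{q}{\phi}\, \1(L(\widehat{T}(\phi), q) \ge \varepsilon) \le \delta.
\]
Consequently $\sum_{\phi: L(\widehat{T}(\phi), q) < \varepsilon} \nprob{q}{\phi} \ge 1 - \delta$, i.e. the set $G' = \{\phi \in \Phi_n : L(\widehat{T}(\phi), q) < \varepsilon\}$ satisfies $\nprob{q}{G'} \ge 1 - \delta$.

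Next I would use the hypothesis $\nprob{q}{G} > \delta$ to force $G \cap G' \ne \emptyset$: indeed, if $G$ and $G'$ were disjoint, then $\nprob{q}{G} + \nprob{q}{G'} \le \nprob{q}{\Phi_n} \le 1$, so $\nprob{q}{G} \le 1 - \nprob{q}{G'} \le \delta$, contradicting $\nprob{q}{G} > \delta$. Hence there exists a profile $\phi^\star \in G \cap G'$, meaning simultaneously $L(\widehat{T}(\phi^\star), p) \le \varepsilon$ (because $\phi^\star \in G$) and $L(\widehat{T}(\phi^\star), q) < \varepsilon$ (because $\phi^\star \in G'$).

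Finally I would invoke the compatibility of $L$ with $d$, namely the triangle-type inequality \eqref{eq:compatible_loss}: taking the action $a = \widehat{T}(\phi^\star) \in \calA$, we get
\[
d(p, q) \le L(\widehat{T}(\phi^\star), p) + L(\widehat{T}(\phi^\star), q) \le \varepsilon + \varepsilon = 2\varepsilon,
\]
which is the claim. I do not anticipate a serious obstacle here — the argument is essentially a counting/union-bound observation combined with the definition of a compatible loss; the only mild subtlety is being careful that \eqref{eq:PML_assumption} is applied with $q$ (not $p$) in the role of the data distribution, which is legitimate since $q \in \calM$ and the guarantee is stated for all distributions in $\calM$, and that the estimator's dependence on the profile alone is what lets us split the probability over profiles against $\nprob{q}{\cdot}$. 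Note also that this lemma does not yet use the continuity Lemma \ref{lemma:continuity} at all — that machinery enters only later when bounding $\nprob{q}{G}$ from below for $q = \pml_\phi$; here we merely record the clean implication "large mass on $G$ $\Rightarrow$ small $d(q,p)$."
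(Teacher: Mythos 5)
Your proof is correct and is essentially the paper's argument in contrapositive-free form: the paper assumes $d(q,p)>2\varepsilon$, deduces via the compatibility inequality that $L(\widehat{T}(\phi'),q)>\varepsilon$ on all of $G$, and contradicts \eqref{eq:PML_assumption} applied to $q$, whereas you directly exhibit a profile in $G$ on which $\widehat{T}$ is also $\varepsilon$-accurate for $q$ and then apply \eqref{eq:compatible_loss}. The two are logically identical, and your side remarks (that \eqref{eq:PML_assumption} is being invoked with $q$ as the data distribution, and that Lemma \ref{lemma:continuity} plays no role here) match the paper's usage.
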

\begin{proof}
The proof of this result follows from the application of condition \eqref{eq:PML_assumption} and a triangle inequality. We use the technique of proof by contradiction and assume by contradiction that $d(q, p)> 2\varepsilon$.

Recall from the definition of $G$ that $L(\widehat{T}(\phi'), p)\le \varepsilon$ for all $\phi'\in G$. By the compatibility assumption \eqref{eq:compatible_loss} of $L$ we have $d(p,q) \le L(\widehat{T}(\phi'),p) + L(\widehat{T}(\phi'),q)$ and therefore $L(\widehat{T}(\phi'), q)>\varepsilon$ for all $\phi'\in G$. In other words, 
\begin{align*}
\nprob{q}{\{\phi'\in \Phi_n:  L(\widehat{T}(\phi'), q)>\varepsilon\} } \ge \nprob{q}{G} > \delta, 
\end{align*}
a contradiction to the assumption of Theorem \ref{thm:PML_main} and we conclude the proof. 
\end{proof}

For each $\phi\in \Phi_n$, let $p_{\phi}$ be the PML distribution and $p_\phi'$ be the approximate PML distribution returned by Lemma \ref{lemma:min_prob}. By Lemma \ref{lemma:PML}, the following upper bound for the failure probability of PML estimator holds: 
\begin{align}\label{eq:PML_error_prob}
\nprob{p}{\{ \phi\in \Phi_n: d(p_\phi, p)>2\varepsilon + \varepsilon'\}} &\le \nprob{p}{\{ \phi\in \Phi_n: d(p_\phi', p)>2\varepsilon\}} \nonumber \\
&\le \nprob{p}{\Phi_n \backslash G} + \nprob{p}{\{ \phi\in G: d(p_\phi', p)>2\varepsilon\}} \nonumber \\
&\le \delta + \sum_{\phi\in G}\nprob{p}{\phi}\cdot \1(\nprob{p_\phi'}{G} \le \delta ),
\end{align}
where the first inequality follows from  the triangle inequality and the closeness condition given by Lemma \ref{lemma:min_prob}. In the third inequality we use the following assumption of Theorem \ref{thm:PML_main}, $\nprob{p}{\Phi_n \backslash G}\le \delta$. 

Before proceeding to our proof, we first review the proof from \cite{acharya2017unified} that upper bounds the probability in \eqref{eq:PML_error_prob}. For any $\phi\in G$ by the definition of distributions $p_{\phi}$ and $p_{\phi}'$ we have, $$\nprob{p_\phi'}{G} \ge \nprob{p_{\phi}'}{\phi}\ge e^{-6}\nprob{p_{\phi}}{\phi} \ge e^{-6}\nprob{p}{\phi}~.$$
Therefore, 
\begin{align*}
\sum_{\phi\in G}\nprob{p}{\phi}\cdot \1(\nprob{p_\phi'}{G} \le \delta )\le \sum_{\phi\in G}\nprob{p}{\phi}\cdot \1(\nprob{p}{\phi} \le e^6\delta ) \le e^6\delta\cdot |G|\le e^6\delta\cdot \exp(3\sqrt{n}),
\end{align*}
where in the last step we have used Lemma \ref{lemma:profile}. 

We improve the above analysis and, in particular, the potentially loose inequality $\bP(p_\phi',G)\ge \bP(p_\phi',\phi)$. We use our continuity lemma (Lemma \ref{lemma:continuity}) to make this improvement.  In our analysis we work with approximate PML distributions $p_{\phi}'$ returned by Lemma \ref{lemma:min_prob} and further perform a quantization (or discretization) of $p_\phi'$ for each $\phi\in G$. 

For all $\phi$, $p_\phi'\in \calM_0$ and we apply Lemma \ref{lemma:continuity} to construct a distribution $q_{\phi}\in \calN$ such that the inequalities in Lemma \ref{lemma:continuity} hold. We specify the value for parameters $(r,s)$ later. The set $\calN$ only consists of $N\triangleq |\calN|\le \exp(cn^r\log n)$ distributions and we write $\calN = \{q_1, \cdots, q_N\}$, where $q_{i}$ denotes the $i$'th distribution in the set $\calN$. Define,
\begin{align}\label{eq:good_set_small}
G_i = \{\phi\in G: q_\phi = q_i \}, \qquad \forall i\in [N]. 
\end{align}
Using this definition, we upper bound the probability in \eqref{eq:PML_error_prob} as follows,
\begin{align}\label{eq:PML_error_prob_2}
\nprob{p}{\{ \phi\in \Phi_n: d(p_\phi, p)>2\varepsilon + \varepsilon'\}} \le \delta + \sum_{i=1}^N \nprob{p}{G_i}\cdot \1(\exists \phi\in G_i: \nprob{p_{\phi}'}{G_i}\le \delta). 
\end{align}
We now show that if $\bP(p_\phi', G_i)$ is small for some $\phi\in G_i$, then $\nprob{p}{G_i}$ must be small as well and we use the quantized distribution $q_i$ to relate the probability values. Consider any $\phi_0\in G_i$, let $p_{\phi_{0}}$ be the PML distribution and $p_{\phi_{0}}'$ be the approximate PML distribution returned by Lemma \ref{lemma:min_prob}. Apply Lemma \ref{lemma:continuity} to $p_{\phi_{0}}'$, let $q_{\phi_{0}} \in \calN$ be the quantized distribution. As $\phi_0\in G_i$, we have $q_{\phi_{0}}=q_{i}$ and the following chain of inequalities holds:
\begin{align*}
\nprob{p_{\phi_0}'}{G_i} &\stepa{\ge} \nprob{q_i}{G_i}^{1/(1-c_0n^{-s})}\exp\left(-cn^{1-2r+s} \right) \\
&= \Big(\sum_{\phi\in G_i}  \nprob{q_i}{\phi} \Big)^{1/(1-c_0n^{-s})}\exp\left(-cn^{1-2r+s} \right) \\
&\stepb{\ge}  \Big(\sum_{\phi\in G_i}  \nprob{p_{\phi}'}{\phi}^{1/(1-c_0n^{-s})} \exp\left(-cn^{1-2r+s} \right) \Big)^{1/(1-c_0n^{-s})}\exp\left(-cn^{1-2r+s} \right) \\
&\stepc{\ge} \Big(\sum_{\phi\in G_i}  [e^{-6}\nprob{p_{\phi}}{\phi}]^{1/(1-c_0n^{-s})} \Big)^{1/(1-c_0n^{-s})}\exp\Big(-cn^{1-2r+s}-\frac{cn^{1-2r+s}}{1-c_0n^{-s}} \Big) \\
&\stepd{\ge} \Big(\sum_{\phi\in G_i}  \nprob{p}{\phi}^{1/(1-c_0n^{-s})} \Big)^{1/(1-c_0n^{-s})}\exp\Big(-cn^{1-2r+s}-\frac{cn^{1-2r+s}}{1-c_0n^{-s}}-\frac{6}{1-c_0n^{-s}} \Big) \\
&\stepe{\ge} \Big(\nprob{p}{G_i}^{1/(1-c_0n^{-s})} |G_i|^{-c_0/(n^s-c_0)} \Big)^{1/(1-c_0n^{-s})}\exp(-\csto n^{1-2r+s}),
\end{align*}
where (a) follows from \eqref{eq:inequality_p_over_q} in Lemma \ref{lemma:continuity}, (b) follows from \eqref{eq:inequality_q_over_p} in Lemma \ref{lemma:continuity}, (c) is due to the approximate PML property from Lemma \ref{lemma:min_prob}, (d) is due to the definition of the PML distribution. In the final inequality (e), we used Jensen's inequality (for the first term) applied to the convex function $t\mapsto t^{1/(1-c_0n^{-s})}$ for $t\ge 0$, and $c_1>0$ is some absolute constant depending on $(c,c_0)$. 

Using the cardinality bound $|G_i|\le \exp(3\sqrt{n})$ (Lemma \ref{lemma:profile}) and rearranging terms we get that for all $\phi_0 \in G_{i}$ there exists a absolute constant $\cstt$ such that the following inequality holds,
\begin{align*}
\nprob{p}{G_i} \le \nprob{p_{\phi_0}'}{G_i}^{(1-c_0n^{-s})^2}\cdot \exp\left(\cstt\left(n^{1-2r+s} + n^{1/2-s}\right) \right),
\end{align*}
where in the above inequality we also used $1/(1-c_0n^{-s}) \in O(1)$. The above inequality holds for all $\phi_0 \in G_{i}$ and if there exists a profile $\phi \in G_i$ such that $\bP(p_\phi', G_i)\le \delta$, then
\begin{equation}\label{eq:bound}
\nprob{p}{G_i} \le \delta^{(1-c_0n^{-s})^2}\cdot \exp\left(\cstt\left(n^{1-2r+s} + n^{1/2-s}\right) \right)~.
\end{equation}
Consequently substituting \eqref{eq:bound} into \eqref{eq:PML_error_prob_2} and using the cardinality bound $N\le \exp(cn^r\log n)$, we conclude that
\begin{align*}
\nprob{p}{\{ \phi\in \Phi_n: d(p_\phi, p)>2\varepsilon + \varepsilon'\}} &\le \delta + N\cdot  \delta^{(1-c_0n^{-s})^2}\cdot \exp\left(\cstt\left(n^{1-2r+s} + n^{1/2-s}\right) \right) \\
&\le \delta +  \delta^{(1-c_0n^{-s})^2}\cdot \exp\left(\cstth \left(n^r\log n+ n^{1-2r+s} + n^{1/2-s}\right) \right), 
\end{align*}
where $c_3>0$ is some absolute constant. Further choosing $r = 3/8, s= 1/8$ to balance the terms on the above exponent, and the constant $c_0>0$ to be small enough, we conclude that
\begin{align*}
\nprob{p}{\{ \phi\in \Phi_n: d(p_\phi, p)>2\varepsilon + \varepsilon'\}} \le \delta + \delta^{1-c}\exp\left(\cstth n^{3/8}\log n\right)
\end{align*}
for any prescribed parameter $c>0$. In other words, using a one-stage covering argument the failure probability of PML estimator improves in the exponent from $O(\sqrt{n})$ to $O(n^{3/8}\log n)$.
%!TEX root = main.tex

\subsection{A General Chaining Argument: Proof of Theorem \ref{thm:PML_main}}\label{subsec:chaining}
Here we provide the proof of Theorem \ref{thm:PML_main} by applying the covering arguments in the previous section several times. Specifically, for each profile $\phi\in \Phi_n$ we successively quantize the PML distribution $p_\phi$ into $M$ levels, where $M$ is the smallest integer solution to the inequality
\begin{align}\label{eq:M_equation}
\frac{1}{12(3\cdot 2^{M-1} - 1)} < c,
\end{align}
where $c>0$ is the constant in the statement of Theorem \ref{thm:PML_main}. 

\begin{figure}[t]
	\centering
	\begin{tikzpicture}
	\draw[color = red, thick] (0,0) ellipse (6cm and 2cm); 
	\draw[color = blue, dashed, thick] (-2.5,0) ellipse (2cm and 1.5cm); 
	\draw[color = blue, dashed, thick] (2.5,0) ellipse (2cm and 1.5cm); 
	\draw[color = cyan, rotate around={45:(-3.5,0.5)}] (-3.5,0.5) ellipse (1cm and 0.4cm);
	\draw[color = cyan] (-2.5, -1) ellipse (1cm and 0.2cm);
	\draw[color = cyan, rotate around={-30:(-1.5,0.5)}] (-1.5,0.5) ellipse (1cm and 0.3cm);
	\draw[color = cyan] (-2.5, -0.2) circle (0.2cm);
	\draw[color = cyan, rotate around={60:(1.5,0.3)}] (1.5,0.3) ellipse (1.1cm and 0.5cm);
	\draw[color = cyan, rotate around={150:(3.4,0.6)}] (3.4,0.6) ellipse (1.1cm and 0.5cm);
	\draw[color = cyan, rotate around={105:(2.8,-0.6)}] (2.8,-0.6) ellipse (0.8cm and 0.3cm);
	\draw[fill] (1.9, 1.2) circle (0.05cm); \draw[fill] (2.7, 0.9) circle (0.05cm);
	\draw[fill] (3.5, 1) circle (0.05cm); \draw[fill] (3.9, 0.1) circle (0.05cm);
	\draw[fill] (1, -0.5) circle (0.05cm); \draw[fill] (1.7, 0) circle (0.05cm);
	\draw[fill] (1.1, 0.3) circle (0.05cm); \draw[fill] (3, -1.2) circle (0.05cm);
	\draw[fill] (2.7, -0) circle (0.05cm); \draw[fill] (-2.5, -0.2) circle (0.05cm);
	\draw[fill] (-2.5, -1) circle (0.05cm); \draw[fill] (-4, -0.1) circle (0.05cm);
	\draw[fill] (-3, 1) circle (0.05cm); \draw[fill] (-3.2, 0.4) circle (0.05cm);
	\draw[fill] (-2.2, 0.8) circle (0.05cm); \draw[fill] (-0.8, 0.1) circle (0.05cm);
	\draw[fill] (-3, -1) circle (0.05cm); \draw[fill] (-2, -1) circle (0.05cm);
	\node[red] at (3,-2) {$G$}; 
	\node[blue] at (-0.8,-1.4) {$G_{1,2}$}; \node[blue] at (4.5,-1) {$G_{2,2}$}; 
	\node[cyan] at (-2.5,0.2) {$G_{2,1}$};\node[cyan] at (-3.5,-0.5) {$G_{1,1}$}; 
	\node[cyan] at (-1.8, -0.7) {$G_{3,1}$};\node[cyan] at (-1.5,-0.1) {$G_{4,1}$}; 
	\node[cyan] at (1.8, -0.7) {$G_{5,1}$};\node[cyan] at (2.2,-1.2) {$G_{6,1}$}; 
	\node[cyan] at (3.8, -0.4) {$G_{7,1}$}; \node at (1.3, 0.3) {$\phi$};
	\end{tikzpicture}
	\caption{A pictorial illustration of the chain of coverings when $M=2$. Profiles are represented by dots, and there are two levels of coverings of good profiles $G$ represented by solid and dashed curves, respectively, where high-level coverings $\{G_{1,2}, G_{2,2}\}$ are coarser than low-level ones $\{G_{1,1},\cdots,G_{7,1}\}$. Each profile $\phi\in G$ has a label in each covering, e.g. $\phi \in G_{5,1}\subseteq G_{2,2}$.}\label{fig:chain}
\end{figure}
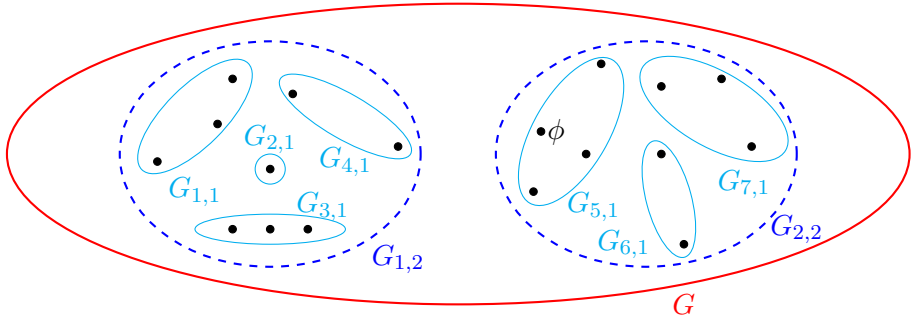

The complete details of the proof are as follows. As in the previous section, define the set of ``good" profiles $G$ and let $p_{\phi}$ and $p_\phi'$ be the PML and approximate PML distributions. The upper bound \eqref{eq:PML_error_prob} on the failure probability of the PML estimator again holds and we restate it here for convenience: 
\begin{align}\label{eq:PML_error_prob_gc}
\nprob{p}{\{ \phi\in \Phi_n: d(p_\phi, p)>2\varepsilon + \varepsilon'\}} \le \delta + \sum_{\phi\in G}\nprob{p}{\phi}\cdot \1(\nprob{p_\phi'}{G} \le \delta )~.
\end{align}
We invoke Lemma \ref{lemma:continuity} $M$ times and pick $M$ subsets of distributions $\calN_1, \cdots, \calN_M$, with $N_m \triangleq |\calN_m| \le \exp(c_mn^{r_m}\log n)$. Further inequalities \eqref{eq:inequality_p_over_q} and \eqref{eq:inequality_q_over_p} hold with parameters $(r_m, s_m)$. The specific choices of $(r_m, s_m)$ are as follows: for $m\in [M]$, 
\begin{align}\label{eq:r_m}
r_m &= \frac{1}{3}\left(1 + \frac{1}{2^{m+1}}\right) - \frac{1}{6(3\cdot 2^{M-1}-1 )}\left(1-\frac{1}{2^m}\right), \\ \label{eq:s_m}
s_m &= \frac{1}{3\cdot 2^m} - \frac{1}{12(3\cdot 2^{M-1}-1)}\left(3-\frac{1}{2^{m-2}}\right). 
\end{align}
After some algebra, we have $5/12 > r_1 > r_2 > \cdots > r_M > 1/3$, and $1/6 > s_1 > s_2 > \cdots > s_M > 0$. The above expressions \eqref{eq:r_m} and \eqref{eq:s_m} are chosen to satisfy the following linear equations, 
\begin{align}\label{eq:covering_identity}
\begin{split}
1 - 2r_m + s_m &= t, \qquad \forall m\in [M], \\
r_{m-1} - s_m &= t, \qquad \forall m\in [M+1], 
\end{split}
\end{align}
with the convention $r_0 = 1/2, s_{M+1} = 0$, and $t = 1/3 + 1/[12(3\cdot 2^{M-1}-1)] < 1/3 + c$ thanks to the choice of $M$ in \eqref{eq:M_equation}. 

Based on the subsets of distributions $\calN_1, \cdots, \calN_M$, we quantize each approximate PML distribution $p_{\phi}'$ as follows. Let $q_{\phi,0} \triangleq p_{\phi}'$, and for any $m\in [M]$, let $q_{\phi, m}\in \calN_m$ be the quantization of $q_{\phi,m-1}$ to the subset $\calN_m$. For any $m\in [M]$, the distributions $({q}_{\phi, m-1}, {q}_{\phi,m})$ satisfy the inequalities \eqref{eq:inequality_p_over_q}, \eqref{eq:inequality_q_over_p} with parameters $(r_m, s_m)$, that is for all $S\subseteq \Phi_n$ the following inequality holds: 
\begin{align}
\nprob{q_{\phi,m-1}}{S} &\ge \nprob{q_{\phi,m}}{S}^{1/(1 - c_0n^{-s_m})}\exp\left(-c_mn^{1-2r_m+s_m} \right), \label{eq:inequality_p_over_q_new}\\
\nprob{q_{\phi,m}}{S} &\ge \nprob{q_{\phi,m-1}}{S}^{1/(1 - c_0n^{-s_m})}\exp\left(-c_mn^{1-2r_m+s_m} \right). \label{eq:inequality_q_over_p_new}
\end{align}
where $c_0>0$ and $c_m>0$ for each $m \in [M]$ are absolute constants. In the above process $p_\phi'\in \calM_0$ and the output distribution of Lemma \ref{lemma:continuity} always belongs to $\calM_0$, therefore it is feasible to reapply Lemma \ref{lemma:continuity} to all the quantized distributions. Therefore for each $\phi\in G$, we have the following chain of quantized distributions $p_{\phi}'=q_{\phi,0} \to q_{\phi,1}\to q_{\phi,2}\to \cdots \to q_{\phi,M}$ and for each $m\in [0,M]$ it induces the following covering of $G$. For each $m \in [0,M]$, let $\calN_m=\{q_{1,m}, \cdots, q_{N_m,m}\}$ and we use $q_{i,m}$ to denote its $i$'th element. Define 
\begin{align}\label{eq:covering}
G_{i,m} = \{\phi\in G: {q}_{\phi,m} = {q}_{i,m} \}, \qquad \forall i\in [N_m].
\end{align}
Clearly, the sets $\{G_{i,m}\}_{i\in [N_m]}$ form a partition of $G$. At different levels, the above partitions form a chain in the sense that $G_{m,i}$ is a disjoint union of the sets $G_{m-1,j}$, where $j\in [N_{m-1}]$ are such that $q_{j,m-1}\in \calN_{m-1}$ quantizes to $q_{i,m}\in \calN_m$. In other words, there is a hierarchy of partitions where high-level partitions are coarser than low-level ones. A pictorial illustration of the above chain of partitions (or coverings) for $M=2$ is illustrated in Figure \ref{fig:chain}.

Now using the same argument as \eqref{eq:PML_error_prob_2} but applied to set $\calN_M$, we get
\begin{align}\label{eq:PML_error_prob_3}
\nprob{p}{\{ \phi\in \Phi_n: d(p_\phi, p)>2\varepsilon + \varepsilon'\}} \le \delta + \sum_{i=1}^{N_M} \nprob{p}{G_{i,M}}\cdot \1(\exists \phi\in G_{i,M}: \nprob{p_\phi'}{G_{i,M}}\le \delta). 
\end{align}

For each $i\in [N_M]$, consider any profile $\phi\in G_{i,M}$. Let $i_m(\phi)\in [N_m]$ be the index of the set in the $m$-th level partition to which the profile $\phi$ belongs, i.e. $\phi \in G_{i_m(\phi),m}$ and $i_M(\phi) = i$. Applying the inequality \eqref{eq:inequality_p_over_q_new} (i.e. Lemma \ref{lemma:continuity}) repeatedly to the chain of quantized distributions $p_\phi' \to q_{i_1(\phi),1}\to q_{i_2(\phi),2}$ leads to,
\begin{align*}
%\delta&\ge 
\nprob{p_\phi'}{G_{i,M}} &\ge \nprob{q_{i_1(\phi),1}}{G_{i,M}}^{1/(1-c_0n^{-s_1})}\exp\left(-c_1n^{1-2r_1+s_1}\right) \\
&\ge \nprob{q_{i_2(\phi),2}}{G_{i,M}}^{1/[(1-c_0n^{-s_1})(1-c_0n^{-s_2})]}\exp\left(-c_1n^{1-2r_1+s_1}-\frac{c_2n^{1-2r_2+s_2}}{1-c_0n^{-s_1}}\right)~.
%&\ge \cdots \\
%&\ge \nprob{q_{i,M}}{G_{i,M}}^{\prod_{m=1}^M (1-c_0n^{-s_m})^{-1}}\exp\Big(-\sum_{m=1}^M \frac{c_m n^{1-2r_m+s_m}}{\prod_{m'<m} (1-c_0n^{-s_{m'}}) } \Big).
\end{align*}
Further continuing to apply inequality \eqref{eq:inequality_p_over_q_new} to the remaining chain of quantized distributions $q_{i_2(\phi),2}\to q_{i_3(\phi),3}\to \cdots\to q_{i,M}$ we get,
\begin{align*}
\nprob{p_\phi'}{G_{i,M}}\ge \nprob{q_{i,M}}{G_{i,M}}^{\prod_{m=1}^M (1-c_0n^{-s_m})^{-1}}\exp\left(-\sum_{m=1}^M \frac{c_m n^{1-2r_m+s_m}}{\prod_{m'<m} (1-c_0n^{-s_{m'}}) } \right).
\end{align*}
Therefore for each $i\in [N_M]$, if there exists a profile $\phi\in G_{i,M}$ such that $\bP(p_\phi', G_{i,M})\le \delta$, then
\begin{align*}
\delta \ge \nprob{q_{i,M}}{G_{i,M}}^{\prod_{m=1}^M (1-c_0n^{-s_m})^{-1}}\exp\left(-\sum_{m=1}^M \frac{c_m n^{1-2r_m+s_m}}{\prod_{m'<m} (1-c_0n^{-s_{m'}}) } \right).
\end{align*}
By choosing a constant $c_0>0$ small enough such that $\prod_{m=1}^M (1-c_0n^{-s_m})\ge 1-c/2$ and rearranging terms, the above inequality leads to
\begin{align}\label{eq:going_down}
\nprob{q_{i,M}}{G_{i,M}} \le \delta^{1-c/2} \exp\Big(C\sum_{m=1}^M n^{1-2r_m+s_m}\Big).
\end{align}
We interpret \eqref{eq:going_down} as the ``going-down" process, where we translate the small probability event $\bP({p_\phi'},{G_{i,M}})$ under the approximate PML distribution into a small probability event $\nprob{q_{i,M}}{G_{i,M}}$ under a quantized distribution that belongs to a much smaller set. 

We now describe the ``going-up" process, where the target is to show that a small probability event $\nprob{q_{i,M}}{G_{i,M}}$ under the quantized distribution also implies a small probability event $\nprob{p}{G_{i,M}}$ under the true distribution $p$. To this end, we use the other inequality \eqref{eq:inequality_q_over_p_new} (i.e. Lemma \ref{lemma:continuity}) for each $m\in [M]$ and $i \in [N_M]$ to get, 
%\begin{align}\label{eq:going_up_induction}
%&\sum_{j \in S^{m}_{i,M}} \nprob{q_{j,m}}{G_{j,m}} \stepa{=} \sum_{j \in S^{m}_{i,M}} \sum_{j' \in S^{m-1}_{j,m} } \nprob{q_{j,m}}{G_{j',m-1}} \nonumber\\
%&\stepb{\ge}  \sum_{j \in S^{m}_{i,M}} \sum_{j' \in S^{m-1}_{j,m} } \nprob{q_{j',m-1}}{G_{j',m-1}}^{1/(1-c_0n^{-s_m})}\exp\left(-c_mn^{1-2r_m+s_m}\right) \nonumber\\
%&\stepc{\ge} \Big(\sum_{j \in S^{m}_{i,M}} \sum_{j' \in S^{m-1}_{j,m} } \nprob{q_{j',m-1}}{G_{j',m-1}}\Big)^{1/(1-c_0n^{-s_m})}\frac{\exp\left(-c_mn^{1-2r_m+s_m}\right)}{(N_{m-1})^{c_0/(n^{s_m} - c_0)}}\nonumber\\
%&\stepd{\ge} \Big(\sum_{j' \in S^{m-1}_{i,M}} \nprob{q_{j',m-1}}{G_{j',m-1}} \Big)^{1/(1-c_0n^{-s_m})} \exp\Big(-c_mn^{1-2r_m+s_m}-\frac{c_{m-1}n^{r_{m-1}}\log n}{n^{s_m}/c_0 - 1} \Big),
%\end{align}
\begin{align}\label{eq:going_up_induction}
&\sum_{j\in [N_m]: G_{j,m}\subseteq G_{i,M}} \nprob{q_{j,m}}{G_{j,m}} = \sum_{j\in [N_m]: G_{j,m}\subseteq G_{i,M}}~~ \sum_{j'\in [N_{m-1}]: G_{j',m-1}\subseteq G_{j,m} } \nprob{q_{j,m}}{G_{j',m-1}} \nonumber\\
&\stepa{\ge}  \sum_{j\in [N_m]: G_{j,m}\subseteq G_{i,M}}~~ \sum_{j'\in [N_{m-1}]: G_{j',m-1}\subseteq G_{j,m} } \nprob{q_{j',m-1}}{G_{j',m-1}}^{\frac{1}{1-c_0n^{-s_m}}}\exp\left(-c_mn^{1-2r_m+s_m}\right) \nonumber\\
&\stepb{\ge} \Big(\sum_{j\in [N_m]: G_{j,m}\subseteq G_{i,M}}~~ \sum_{j'\in [N_{m-1}]: G_{j',m-1}\subseteq G_{j,m} } \nprob{q_{j',m-1}}{G_{j',m-1}}\Big)^{\frac{1}{1-c_0n^{-s_m}}}\frac{\exp\left(-c_mn^{1-2r_m+s_m}\right)}{(N_{m-1})^{c_0/(n^{s_m} - c_0)}}\nonumber\\
&\stepc{\ge} \big(\sum_{j'\in [N_{m-1}]: G_{j',m-1}\subseteq G_{i,M}} \nprob{q_{j',m-1}}{G_{j',m-1}} \big)^{\frac{1}{1-c_0n^{-s_m}}} \exp\left(-c_m n^{1-2r_m+s_m}-\frac{c_{m-1}n^{r_{m-1}}\log n}{c_0^{-1}n^{s_m} - 1} \right)
\end{align}
%where (a) follows from \eqref{eq:disjoint_partition}, (b) is due to inequality \eqref{eq:inequality_q_over_p_new} (i.e. Lemma \ref{lemma:continuity}), (c) follows from the convexity of $t\mapsto t^{1/(1-c_0n^{-s_m})}$ for $t\ge 0$ and the fact that the number of summands is at most $N_{m-1}$, and (d) follows from the rearrangement of the sum and the cardinality bound $N_{m-1}\le \exp(c_{m-1} n^{r_{m-1}}\log n)$ in Lemma \ref{lemma:continuity}. 
where (a) is due to inequality \eqref{eq:inequality_q_over_p_new} (i.e. Lemma \ref{lemma:continuity}), (b) follows from the convexity of $t\mapsto t^{1/(1-c_0n^{-s_m})}$ for $t\ge 0$ and the fact that the number of summands is at most $N_{m-1}$, and (c) follows from the rearrangement of the sum and the cardinality bound $N_{m-1}\le \exp(c_{m-1} n^{r_{m-1}}\log n)$ in Lemma \ref{lemma:continuity}. Note that when $m=1$, in the above inequality we have $q_{\phi,0} = p_{\phi}'$, $G_{\phi,0} = \{\phi\}$, and $N_0=|G|$ with $r_0 = 1/2$ (which follows from Lemma \ref{lemma:profile}). Hence, applying \eqref{eq:going_up_induction} repeatedly with $m=M, M-1,\cdots,1$, for the above choice of $c_0$ it holds that
\begin{align}\label{eq:going_up}
%\nprob{q_{i,M}}{G_{i,M}} &\ge \Big(\sum_{j \in S^{0}_{i,M}} \nprob{q_{j,0}}{G_{j,0}} \Big)^{\prod_{m=1}^{M}\frac{1}{(1-c_0n^{-s_m})}} \exp\Big(-C\sum_{m=1}^M \left(n^{1-2r_m+s_m} + n^{r_{m-1}-s_m}\log n\right) \Big) \nonumber \\
&\nprob{q_{i,M}}{G_{i,M}}\nonumber \\ 
&\ge \Big(\sum_{\phi\in G_{i,M}} \nprob{p_{\phi}'}{\phi} \Big)^{\prod_{m=1}^{M}\frac{1}{(1-c_0n^{-s_m})}} \exp\left(-C\sum_{m=1}^M \left(n^{1-2r_m+s_m} + n^{r_{m-1}-s_m}\log n\right) \right) \nonumber \\
%&= \Big(\sum_{\phi\in G_{i,M}} \nprob{q_{j,0}}{G_{j,0}} \Big)^{1-\frac{c}{2}} \exp\Big(-C\sum_{m=1}^M \left(n^{1-2r_m+s_m} + n^{r_{m-1}-s_m}\log n\right) \Big) \nonumber \\
&\stepa{\ge} \Big(\sum_{\phi\in G_{i,M}} \nprob{p_{\phi}'}{\phi}\Big)^{1/(1-c/2)}\exp\left(-C\sum_{m=1}^M \left(n^{1-2r_m+s_m} + n^{r_{m-1}-s_m}\log n\right) \right) \nonumber \\
&\stepb{\ge} \Big(\sum_{\phi\in G_{i,M}} e^{-6}\nprob{p_\phi}{\phi}\Big)^{1/(1-c/2)}\exp\left(-C\sum_{m=1}^M \left(n^{1-2r_m+s_m} + n^{r_{m-1}-s_m}\log n\right) \right) \nonumber \\
&\stepc{\ge} \Big(\sum_{\phi\in G_{i,M}} e^{-6}\nprob{p}{\phi}\Big)^{1/(1-c/2)}\exp\left(-C\sum_{m=1}^M \left(n^{1-2r_m+s_m} + n^{r_{m-1}-s_m}\log n\right) \right) \nonumber\\
&= (e^{-6}\nprob{p}{G_{i,M}})^{1/(1-c/2)} \exp\left(-C\sum_{m=1}^M \left(n^{1-2r_m+s_m} + n^{r_{m-1}-s_m}\log n\right) \right),
\end{align}
where (a) follows because $\prod_{m=1}^{M} (1-c_0n^{-s_m}) \geq 1-c/2$ by the choice of $c_0$, (b) follows from the definition of the approximate PML in Lemma \ref{lemma:min_prob}, and (c) follows from the definition of PML that $\nprob{p_{\phi}}{\phi}\ge \nprob{p}{\phi}$.

Combining \eqref{eq:going_down} and \eqref{eq:going_up}, we conclude that whenever there exists a profile $\phi\in G_{i,M}$ such that $\bP(p_\phi',G_{i,M})\le \delta$, then the following inequality holds: 
\begin{align*}
\nprob{p}{G_{i,M}} \le \delta^{1-c}\cdot \exp\left(C\sum_{m=1}^M \left(n^{1-2r_m+s_m} + n^{r_{m-1}-s_m}\log n\right) \right). 
\end{align*}
Therefore substituting the above inequality in \eqref{eq:PML_error_prob_3} we get the following upper bound on the failure probability of PML estimator,
\begin{align}\label{eq:PML_error_prob_final}
&\nprob{p}{\{ \phi\in \Phi_n: d(p_\phi, p)>2\varepsilon + \varepsilon'\}} \nonumber \\
& \quad \quad \le \delta + \delta^{1-c}\cdot \exp\left(Cn^{r_M}\log n+ C\sum_{m=1}^M \left(n^{1-2r_m+s_m} + n^{r_{m-1}-s_m}\log n\right) \right).
\end{align}
Finally, as the equation \eqref{eq:covering_identity} is satisfied by the parameters $\{(r_m,s_m)\}_{m\in[M]}$ with $t < 1/3 + c$, the inequality \eqref{eq:PML_error_prob_final} gives the claimed result of Theorem \ref{thm:PML_main}.

\section{Estimator Construction of Sorted Distribution}\label{sec.estimator}
In this section, we construct the desired estimator for distribution estimation under Wasserstein distance (Theorem \ref{thm:sorted_main}). This section is organized as follows: an explicit estimator is constructed in Section \ref{subsec.estimator}, and Section \ref{subsec.analysis} provides a high-level road map of the analysis of the previous estimator and proves Theorem \ref{thm:sorted_main}, assuming several technical lemmas whose proofs are relegated to the appendix. Further discussions are placed in Section \ref{subsec.poisson_approx}, and in particular as a by-product, we show a better tradeoff between the polynomial approximation error and the maximum magnitudes of polynomial coefficients for the Poisson approximation of general $1$-Lipschitz functions, which is a pure approximation-theoretic problem and might be of independent interest. 

\subsection{Estimator Construction}\label{subsec.estimator}
Our estimator generalizes the idea of the \emph{local moment matching} estimator in \cite{han2018local}. Specifically, to obtain a small Wasserstein distance $\text{W}_1(\widehat{\mu},\mu_p)$, we partition the support $[0,1]$ of $\mu_p$ into several local intervals, and require that the restriction of measure $\widehat{\mu}$ on each local interval have matched low-order moments (up to order $\Theta(\log n)$) as those of $\mu_p$. In \cite{han2018local}, sample splitting is necessary to locate each symbol in the correct local interval, while incorrect identification may occur with probability inverse polynomial in $n$. Also, one need to solve a feasibility program in each local interval, where each program may fail to have a feasible solution with probability inverse polynomial in $n$. Hence, although the estimator in \cite{han2018local} is minimax rate-optimal in expectation, it is unstable in the sense that the failure probability cannot be super-polynomially small. To address this issue, we stabilize the estimator by removing the sample splitting and considering a single linear program taking all local intervals into account simultaneously. Specifically, our estimator is constructed as follows: 
\begin{enumerate}
	\item Split $[0,1]$ into local intervals: let $c_1>0$ be a large tuning constant to be specified later, and assume that $M\triangleq n/(c_1\log n)$ is an integer. For $m\in [M]$, let $I_m$ be the local interval
	\begin{align}\label{eq:local_interval}
	 I_m = \left(\frac{c_1\log n}{n}\cdot (m-1)^2, \frac{c_1\log n}{n}\cdot m^2 \right].  
	\end{align}
	We also define the following slightly enlarged intervals of $I_m$: 
	\begin{align}\label{eq:local_interval_enlarged}
	\widetilde{I}_m = \left[\frac{c_1\log n}{n}\cdot \left(m-\frac{5}{4}\right)_+^2, \frac{c_1\log n}{n}\cdot \left(m+\frac{1}{4}\right)^2 \right], 
	\end{align}
	with $(x)_+ \triangleq \max\{x,0\}$. Let $x_1 =0$, and $x_m$ be the mid-point of the interval $I_m$ for $m\ge 2$. Let $\widetilde{\ell}_m$ be the length of the interval $\widetilde{I}_m$. 
	\item Estimators for smoothed moments: for each $d\in \NN$ and $m\in [M]$, define a  function $g_{d,x_m}: \bR_+ \to \bR$ with $g_{0,x_m}(x)\equiv 1$, and
	\begin{align}\label{eq:g_function}
	g_{d,x_m}(x) \triangleq \sum_{d'=0}^d \binom{d}{d'}(-x_m)^{d-d'}\prod_{d''=0}^{d'-1}\left(x-\frac{2d''}{n}\right).
	\end{align}
	We also define the following modified version that cuts off the function $g$ near the boundary of the $m$-th local interval: let $x_{m,\text{L}} = c_1(m-3/2)_+^2\log n/n, x_{m,\text{R}}=c_1(m+1/2)^2\log n/n$, and
	\begin{align}\label{eq:tilde_g_function}
	 \widetilde{g}_{d,x_m}(x) = \begin{cases}
	  g_{d,x_m}(x_{m,\text{L}}) & \text{if } x\le x_{m,\text{L}}, \\
	  g_{d,x_m}(x) & \text{if } x_{m,\text{L}} < x < x_{m,\text{R}}, \\
	  g_{d,x_m}(x_{m,\text{R}}) & \text{if } x\ge x_{m,\text{R}}. 
	 \end{cases}
	\end{align}
	Now for each $d\in \NN$ and $m\in [M]$, we define our estimator of the smoothed $d$-th moments in the $m$-th local interval as
	\begin{align}\label{eq:est_smoothed_moments}
	\widehat{M}_{m,d} \triangleq \sum_{j=1}^k \sum_{s\in nI_m/2}\bP\left(\mathsf{B}\left(h_j, \frac{1}{2}\right) = s \right)\cdot \widetilde{g}_{d,x_m}\left(\frac{h_j - s}{n/2}\right),
	\end{align}
	where $h_j = \sum_{i=1}^n \1(X_i = j)$ denotes the total number of occurrences (i.e. histogram) of the symbol $j$ in the observations (cf. Definition \ref{def:histogram}), and $aI\triangleq \{ax: x\in I\}$. 
	\item The final linear program: let $c_2>0$ be a small tuning constant to be specified later, and assume that $D\triangleq c_2\log n$ is an integer. Define the initial estimator $\widehat{\mu}_0$ to be any solution of the following linear program: 
	\begin{align}\label{eq:linear_program}
	\begin{split}
	\text{minimize}\qquad &L(\widehat{\mu}_0, \widehat{M}) \triangleq \sum_{m=1}^M \widetilde{\ell}_m\left(\sum_{d=1}^D \widetilde{\ell}_m^{-d}\left|k\cdot \int_{\widetilde{I}_m} (x-x_m)^d\widehat{\mu}_m(dx) - \widehat{M}_{m,d} \right| \right. \\
	& \left.  \qquad \qquad \qquad \qquad \qquad + \left|\sum_{m'\ge m}\left(k\cdot \widehat{\mu}_{m'}(\widetilde{I}_{m'}) - \widehat{M}_{m',0} \right) \right| \right) \\
	\text{subject to}\qquad & \widehat{\mu}_0 = \sum_{m=1}^M \widehat{\mu}_m, \quad \text{supp}(\widehat{\mu}_m) \subseteq \widetilde{I}_m, \quad \widehat{\mu}_0(\bR)\le 1 , \quad \int_{[0,1]} x\widehat{\mu}_0(dx) \le k^{-1}. 
	\end{split}
	\end{align}
	The final estimator $\widehat{\mu}$ is constructed to be $\widehat{\mu} = \widehat{\mu}_0 + (1-\widehat{\mu}_0(\bR))\delta_0 $, with $\delta_0$ being the Dirac measure on the single point $0$. 
\end{enumerate}

A few remarks of the above construction are in order: 
\begin{itemize}
	\item The high-level idea: we provide some intuition on why the estimator $\widehat{\mu}$ in \eqref{eq:linear_program} satisfies the desired concentration inequality in Theorem \ref{thm:sorted_main}. In fact, applying the dual representation of the Wasserstein-$1$ distance (cf. Lemma \ref{lemma.dual}), invoking Jackson-type theorems in approximation theory (cf. Lemma \ref{lemma:jackson}), and using the polynomial coefficient bounds (cf. Lemma \ref{lemma:coefficients}) will yield to the following deterministic inequality: 
	\begin{align*}
	k\cdot \text{W}_1(\widehat{\mu}, \mu_p) \le C \left( \sqrt{\frac{k}{n\log n}} + n^{O(c_2)}L(\widehat{\mu}, \mu_p) \right),
	\end{align*}
	where $C>0$ is some absolute constant, and with a slight abuse of notation, we write $L(\widehat{\mu}, \mu_p)$ for the quantity $L(\widehat{\mu}, M)$ in \eqref{eq:linear_program} with true moments $M_{m,d} = k\cdot \int_{\widetilde{I}_m} (x-x_m)^d \mu_m(dx)$, with a suitable decomposition $\mu_p = \sum_{m=1}^M \mu_m$. In other words, our choice of $L$ in \eqref{eq:linear_program} serves as a suitable upper bound to the target Wasserstein distance. 
	
	To proceed, we make a crucial observation that (a slight variant of) $\mu_p$ is always a feasible solution of \eqref{eq:linear_program}, and therefore the definition of the minimizer $\widehat{\mu}_0$ gives
	\begin{align*}
	L(\widehat{\mu}, \mu_p) = L(\widehat{\mu}_0, \mu_p) \le L(\widehat{\mu}_0, \widehat{M}) + L(\mu_p, \widehat{M}) \le 2L(\mu_p, \widehat{M}), 
	\end{align*}
	where the final quantity $L(\mu_p, \widehat{M})$ quantifies the estimation performance of the local moment estimators $\widehat{M}_{m,d}$ for the true local moments. The proof of Theorem \ref{thm:sorted_main} will be completed by showing that $\bE[L(\mu_p, \widehat{M})] = \widetilde{O}(n^{-1/3})$, and changing a single observation will only result in an $\widetilde{O}(n^{-1})$ change on $L(\mu_p, \widehat{M})$. 
	\item The choice of the local intervals: the local intervals $I_m$ (and the enlarged variants $\widetilde{I}_m$) are chosen so that symbols with probability in $I_m$ and in $[0,1]\backslash \widetilde{I}_m$ essentially do not affect each other in the sense of Lemma \ref{lemma.localization}. These choices coincide with ``confidence set" in \cite{han2016minimax}, and the exact ones appear in \cite{han2018local,hao2019unified,hao2020profile}. 
	\item The function $g_{d,x_m}(x)$ and Poissonization: the function $g_{d,x_m}(x)$ in \eqref{eq:g_function} is chosen so that if $h\sim \mathsf{Poi}(np/2)$, we have \cite[Example 2.8]{Withers1987}
	\begin{align*}
	\bE\left[g_{d,x_m}\left(\frac{h}{n/2}\right) \right] = (p - x_m)^d. 
	\end{align*}
	Note that here we assume that each histogram $h_j$ follows a Poisson distribution instead of Binomial in the i.i.d. sampling model. In fact, we will analyze the performance of $\widehat{\mu}$ in the Poissonized model: the sample size is an independent Poisson random variable $N\sim \mathsf{Poi}(n)$ instead of $n$, so that the counts $(h_1,\cdots,h_k)$ are mutually independent. However, we remark that the same result also holds for the estimator in \eqref{eq:linear_program} without any modification under the original i.i.d. sampling model, by using the reduction technique at the end of this section. 
	\item The modification $\widetilde{g}_{d,x_m}(x)$: we cut off the function $g_{d,x_m}(x)$ outside the interval $[x_{m,\text{L}}, x_{m,\text{R}}]$ to ensure a uniform small difference $|\widetilde{g}_{d,x_m}(x) - \widetilde{g}_{d,x_m}(y)|$ for $|x-y|\le 2/n$, which helps to establish the desired concentration result. The choice of the interval $[x_{m,\text{L}}, x_{m,\text{R}}]$ is slightly larger than $\widetilde{I}_m$, for the function $g_{d,x_m}$ will be essentially defined on $\widetilde{I}_m$. 
	\item Sample splitting and smoothed moments $\widehat{M}_{m,d}$: ideally we would like to estimate the local moments $\sum_{j=1}^k p_j^d\cdot \1(p_j\in I_m)$ for each $m\in [M]$ and $d\in \NN$. However, the above quantity does not admit an unbiased estimator, and is information-theoretically hard to estimate for $p_j$ close to the boundary of $I_m$. In previous works of functional estimation, the idea of sample splitting was used to overcome the above difficulty: split the empirical probability $\widehat{p}_j$ into two independent halves $\widehat{p}_{j,1}$ and $\widehat{p}_{j,2}$, then the modified moments $\sum_{j=1}^k p_j^d\cdot \1(\widehat{p}_{j,1}\in I_m)$ have a simple unbiased estimator based on $\widehat{p}_{j,2}$. However, as we argued at the beginning of this section, sample splitting will make the overall estimator unstable. To overcome this difficulty, we consider the following \emph{smoothed moments}
	\begin{align}\label{eq:smoothed_moments}
	 M_{m,d} = \sum_{j=1}^k (p_j-x_m)^d \cdot \bP\left(\mathsf{Poi}\left(\frac{np_j}{2}\right) \in \frac{n}{2}I_m \right),
	\end{align}
	which by the construction of the local intervals (cf. Lemma \ref{lemma.localization}) is approximately the sum of the $d$-th moments on $\widetilde{I}_m$, while with a soft transition at the boundary. The main advantage of the smoothed moments $M_{m,d}$ is that it admits the following unbiased estimator: 
	\begin{align*}
	\bE\left[\sum_{j=1}^k \sum_{s\in nI_m/2} \bP\left(\mathsf{B}\left(h_j, \frac{1}{2}\right) = s \right)\cdot g_{d,x_m}\left(\frac{h_j - s}{n/2}\right)  \right] = M_{m,d}. 
	\end{align*}
	The above estimator motivates the current form of the smoothed moment estimators $\widehat{M}_{m,d}$ in \eqref{eq:est_smoothed_moments}, and makes use of an implicit sample splitting $\bE[\1(\widehat{p}_{j,1}\in I_m)g_{d,x_m}(\widehat{p}_{j,2}) \mid h_j ]$. 
	\item The final linear program in \eqref{eq:linear_program}: as has been discussed in the high-level idea, the specific form of the linear program in \eqref{eq:linear_program} is guided by two principles. First, the objective function $L$ upper bounds the target Wasserstein distance, thus the minimization of $L$ also essentially minimizes the Wasserstein distance. Second, the true associated measure $\mu_p$ is always a feasible solution to \eqref{eq:linear_program}, and therefore the triangle inequality can be applied to upper bound the distance between the estimator $\widehat{\mu}$ and the truth $\mu_p$. Regarding the specific form of the objective $L$, we aim to find a probability measure $\widehat{\mu}_m$ to match the smoothed local moment estimators $\widehat{M}_{m,d}$ up to order $D$ in each local interval, with appropriate weights for each local interval and each degree. We also treat the case $d=0$ differently, for $\sum_{m'\ge m} M_{m',0}$ is the right quantity to look at on bounding the Wasserstein distance, and we will show that the estimation error of the above sum is roughly the same as that of the single term $M_{m,0}$. For the constraints, the inequality $\widehat{\mu}_0(\bR)\le 1$ ensures that $\widehat{\mu}_0$ can be modified to a probability measure by adding mass on zeros, and the mean constraint $\int_0^1 x\widehat{\mu}_0(dx)\le 1/k$ is mostly for technical purposes. 
%	\item The missing cardinality constraint: although it is clear that the estimator $\widehat{\mu}$ only depends on the profile (since $\widehat{M}$ does), it may take different values for different profiles and violate the cardinality constraint in Theorem \ref{thm:sorted_main}. This constraint will be enforced in Section \ref{subsec.quantization} with a final quantization step of the final estimator $\widehat{\mu}$, with an additive error at most $O(n^{-1/3})$. 
	\item Computational complexity: the main computation lies in the computation of $\widehat{M}_{m,d}$'s and solving the linear program \eqref{eq:linear_program}. Based on \eqref{eq:g_function}--\eqref{eq:est_smoothed_moments}, the evaluation of each function $g_{d,x_m}$ takes $\widetilde{O}(1)$ time, and thus the computation complexity of each $\widehat{M}_{m,d}$ is at most $\widetilde{O}(k\cdot n\widetilde{\ell}_m)$. Therefore the overall time complexity for the computation of moment estimators is $\widetilde{O}(nk)$. As for the linear program \eqref{eq:linear_program}, by a quantization of the support of $\widehat{\mu}_0$ it can also be solved in polynomial time. Finally, we remark that for the result in Theorem \ref{thm:PML_main} to hold, we do not need our estimator $\widehat{\mu}$ to be efficiently computable. 
\end{itemize}

The performance of the estimator $\widehat{\mu}$ is summarized in the following theorem. 
\begin{theorem}\label{thm:LMM}
Let $c_1>0$ be a large enough constant given in Lemma \ref{lemma:local_moments_expectation} and \ref{lemma.localization}, $c_2 \le c_0\delta$ for some small constant $c_0>0$ depending only on $c_1$, and $c_2\log n\ge 1$. Then under the Poissonized model, the above estimator $\widehat{\mu}$ satisfies 
\begin{align*}
\bP\left( k\cdot \text{\rm W}_1(\widehat{\mu}, \mu_p) \ge c\sqrt{\frac{k}{n\log n}} + \varepsilon \right) \le \exp(-c'n^{1-\delta}\varepsilon^2)
\end{align*}
for all $p\in \calM_k$ and $\varepsilon \ge n^{\delta-1/3}$, where $c,c'>0$ are absolute constants. 
\end{theorem}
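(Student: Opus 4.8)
The plan is to reduce the probabilistic statement to a tail bound for a single scalar random quantity, $L(\mu_p,\widehat M)$, via a deterministic surrogate inequality, and then control that quantity by its expectation plus a bounded–difference concentration estimate. First I would establish, for \emph{every} realization of the samples, an inequality of the form
\[
k\cdot \text{W}_1(\widehat\mu,\mu_p)\;\le\; C\left(\sqrt{\frac{k}{n\log n}}\;+\;n^{O(c_2)}\,L(\mu_p,\widehat M)\right),
\]
where $L(\mu_p,\widehat M)$ denotes the objective of \eqref{eq:linear_program} evaluated at the true data measure $\mu_p$ (under a suitable decomposition $\mu_p=\sum_m\mu_m$) against the estimated smoothed moments $\widehat M$. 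This combines: the Kantorovich dual representation of $\text{W}_1$ (Lemma \ref{lemma.dual}); the fact that a $1$-Lipschitz test function restricted to $\widetilde I_m$ is approximated to error $\widetilde O(\widetilde\ell_m/D)=\widetilde O(\widetilde\ell_m/\log n)$ by a degree-$D$ polynomial (Lemma \ref{lemma:jackson}); the coefficient bounds of Lemma \ref{lemma:coefficients}, which translate such approximations on the $m$-th interval into the weighted moment discrepancies appearing in $L$ (this produces the $n^{O(c_2)}$ factor, and the $d=0$ running–sum term accounts for mass transport \emph{between} intervals); and finally the observation that a slight variant of $\mu_p$ is feasible for \eqref{eq:linear_program}, so that a triangle inequality on the absolute values defining $L$ together with the minimality $L(\widehat\mu,\widehat M)=L(\widehat\mu_0,\widehat M)\le L(\mu_p,\widehat M)$ bounds the moment discrepancy between $\widehat\mu$ and $\mu_p$ by $2L(\mu_p,\widehat M)$. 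After this step the theorem is reduced to a tail bound for the scalar $L(\mu_p,\widehat M)$.

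Next I would bound the expectation: $\bE[L(\mu_p,\widehat M)]=\widetilde O(n^{-1/3})$ (Lemma \ref{lemma:local_moments_expectation}). Each term $|M_{m,d}-\widehat M_{m,d}|$ splits into a bias part — the gap between the cut-off smoothed moment of \eqref{eq:smoothed_moments} and the true restricted moment $k\int_{\widetilde I_m}(x-x_m)^d\mu_m(dx)$, negligible because the Poisson weight $\bP(\mathsf{Poi}(np_j/2)\in nI_m/2)$ is exponentially close to $\1(p_j\in\widetilde I_m)$ by the choice of intervals (Lemma \ref{lemma.localization}) — plus a stochastic part whose variance I would control using the magnitude bounds on the coefficients of $g_{d,x_m}$ (Lemma \ref{lemma:coefficients}) together with standard moment estimates for $\mathsf{Poi}(\cdot)$ and $\mathsf{B}(\cdot,1/2)$. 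Summing against the weights $\widetilde\ell_m\cdot\widetilde\ell_m^{-d}$ over $d\le D$ and $m\le M$ yields $\widetilde O(n^{-1/3})$. Consequently $n^{O(c_2)}\,\bE[L(\mu_p,\widehat M)]=\widetilde O(n^{O(c_2)-1/3})\ll\varepsilon$ whenever $\varepsilon\ge n^{\delta-1/3}$ and $c_2\le c_0\delta$ with $c_0$ small enough.

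Then I would prove a bounded-difference property: $L(\mu_p,\widehat M)$, as a function of the i.i.d.\ samples, changes by at most $\widetilde O(1/n)$ when one sample is altered. Altering one observation perturbs at most two histogram counts $h_j$ by $\pm1$; by the cut-off construction, $\widetilde g_{d,x_m}$ satisfies $|\widetilde g_{d,x_m}(x)-\widetilde g_{d,x_m}(y)|=\widetilde O(1)$ for $|x-y|\le 2/n$ uniformly in $d\le D$, and the binomial-thinning weights $\bP(\mathsf{B}(h_j,1/2)=s)$ are themselves Lipschitz in $h_j$; summing over $m,d$ against the weights gives the claimed $\widetilde O(1/n)$ sensitivity. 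Conditioning on $N=\nu$ in the Poissonized model makes the observations i.i.d., so McDiarmid's inequality gives $\bP\big(L(\mu_p,\widehat M)\ge \bE[L(\mu_p,\widehat M)\mid N=\nu]+t\,\big|\,N=\nu\big)\le\exp(-\Omega(t^2 n/(\nu\log^{O(1)}n)))$. Taking $\nu\le 2n$ (the complement has probability $e^{-\Omega(n)}$, negligible for $\varepsilon$ in the stated range), choosing $t\asymp\varepsilon\,n^{-O(c_2)}$, combining with the expectation bound, passing back through the surrogate inequality, and absorbing all powers $n^{O(c_2)}$ using $c_2\le c_0\delta$, yields $\bP(k\,\text{W}_1(\widehat\mu,\mu_p)\ge c\sqrt{k/(n\log n)}+\varepsilon)\le\exp(-c'n^{1-\delta}\varepsilon^2)$; the i.i.d.\ sampling model follows by the Poissonization reduction of Section \ref{subsec.estimator}.

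The main obstacle is the uniform-in-$(m,d)$ control of \emph{both} the variance (for the expectation bound) and the per-sample sensitivity (for concentration) of the smoothed moment estimators $\widehat M_{m,d}$: the functions $g_{d,x_m}$ have degree $D=\Theta(\log n)$ with possibly polynomially large coefficients, while the objective $L$ reweights the $d$-th discrepancy by $\widetilde\ell_m^{-d}$, amplifying high-degree errors. This is precisely why the estimator abandons sample splitting in favour of binomial thinning (making $\widehat M_{m,d}$ unbiased for a quantity with a \emph{soft} boundary) and introduces the cut-off $\widetilde g_{d,x_m}$ (controlling per-sample perturbations), and it is where the sharp trade-off between polynomial approximation error and coefficient magnitude — the theme of Section \ref{subsec.poisson_approx} — enters. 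Establishing the deterministic surrogate inequality of the first step is the other delicate ingredient, since it must faithfully upper bound $\text{W}_1$, including inter-interval mass transport, by the explicitly chosen objective $L$.
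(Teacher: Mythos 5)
Your proposal follows the paper's proof (Section \ref{subsec.analysis}) essentially step for step: the deterministic surrogate inequality built from Kantorovich duality (Lemma \ref{lemma.dual}), Jackson's theorem (Lemma \ref{lemma:jackson}), the coefficient bound (Lemma \ref{lemma:coefficients}), and the feasibility of a measure $\mu\approx\mu_p$ for the LP; the expectation bound $\bE[L(\mu,\widehat{M})]=\widetilde{O}(n^{-1/3+O(c_2)})$ from Lemmas \ref{lemma:local_moments_expectation} and \ref{lemma:total_variance}; the per-observation sensitivity $O(n^{c_2-1}\log n)$ from Lemma \ref{lemma:bounded_diff}; and a McDiarmid-type tail. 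The one deviation — conditioning on $N=\nu$, applying ordinary McDiarmid, then averaging over $\nu\le 2n$ — creates a small but genuine gap relative to the paper's route, which instead applies a Poisson-adapted McDiarmid inequality (Lemma \ref{lemma:mcdiarmid}) whose deviation is centered at the \emph{unconditional} $\bE[L]$. Your conditional tail is centered at $\bE[L\mid N=\nu]$; but Lemma \ref{lemma:local_moments_expectation} controls $\bE[L]$ under the Poissonized model, where the counts $h_j$ are independent Poisson, and conditioning on $N=\nu$ makes them dependent multinomial counts, so the expectation estimate would have to be re-derived or separately transferred. There is also a minor arithmetic slip in the exponent: with sensitivity $c\asymp n^{c_2-1}\log n$, conditional McDiarmid gives $\Omega(t^2/(\nu c^2))=\Omega(t^2 n^{2-2c_2}/(\nu\log^2 n))$, i.e.\ an extra factor of roughly $n$ beyond your stated $\Omega(t^2 n/(\nu\log^{O(1)}n))$; once corrected and combined with $t\asymp\varepsilon n^{-O(c_2)}$, $\nu\asymp n$, and $c_2\le c_0\delta$, this does yield the claimed $\exp(-c'n^{1-\delta}\varepsilon^2)$ rate exactly as you outline.
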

The proof of Theorem \ref{thm:LMM} will be placed in the next subsection. We show that although Theorem \ref{thm:LMM} assumes a Poissonized model, it also implies the claimed result in Theorem \ref{thm:sorted_main} under the original sampling model. The proof is via the following simple reduction: let $N\sim\mathsf{Poi}(n)$ be the random sample size used in the Poissonized model, and $E$ be the above failure event. Then
\begin{align*}
\bP(E \mid N=n) = \frac{\bP(E, N=n)}{\bP(N=n)} \le \frac{\bP(E)}{\bP(N=n)} \le \sqrt{n}\cdot \exp(-c'n^{1-\delta}\varepsilon^2),
\end{align*}
where $\bP(E\mid N=n)$ is exactly the failure probability under the i.i.d. sampling model with fixed sample size $n$, and we have used that $\bP(\mathsf{Poi}(n)=n)\ge 1/\sqrt{n}$. Since the $\sqrt{n}$ factor does not offset the super-polynomially small deviation probability, the claimed concentration result in Theorem \ref{thm:sorted_main} holds. 

\subsection{Estimator Analysis}\label{subsec.analysis}
This section is devoted to the proof of Theorem \ref{thm:LMM}. We first upper bound the Wasserstein distance $\text{W}_1(\widehat{\mu}, \mu_p)$ in terms of the random quantity $L(\mu, \widehat{M})$, where $\mu\approx \mu_p$ is a carefully chosen feasible solution of the linear program \eqref{eq:linear_program}, and $L$ is the objective function in \eqref{eq:linear_program}. Subsequently, we upper bound the expectation $\bE[L(\mu,\widehat{M})]$ and the deivation probability $\bP(L(\mu,\widehat{M}) \ge \bE[L(\mu,\widehat{M})] + \varepsilon)$, respectively. 

\subsubsection{A deterministic upper bound of Wasserstein distance}
We first construct a candidate feasible solution $\mu$ of \eqref{eq:linear_program}. For $m\in [M]$, let 
\begin{align}\label{eq:local_measure}
\mu_m \triangleq \frac{1}{k}\sum_{j: p_j \in \widetilde{I}_m} \delta_{p_j}\cdot \bP\left(\mathsf{Poi}\left(\frac{np_j}{2}\right)\in \frac{n}{2}I_m\right)
\end{align}
be the target measure on the $m$-th local interval, and $\mu = \sum_{m=1}^M \mu_m$. Clearly $\text{supp}(\mu_m)\subseteq \widetilde{I}_m$, and 
\begin{align*}
\mu_p - \mu = \frac{1}{k}\sum_{j=1}^k \delta_{p_j} \cdot \sum_{m=1}^M \bP\left(\mathsf{Poi}\left(\frac{np_j}{2}\right) \in\frac{n}{2}I_m \right)\1(p_j\notin \widetilde{I}_m). 
\end{align*}
Hence, $\mu_p - \mu\ge 0$ is a non-negative measure, and Lemma \ref{lemma.localization} shows that $(\mu_p - \mu)(\bR)\le n^{-4}$. As a result, we have
\begin{align*}
\mu(\bR) \le \mu_p(\bR) = 1, \qquad \int_{[0,1]} x\mu(dx) \le \int_{[0,1]} x\mu_p(dx) = \frac{1}{k}\sum_{j=1}^k p_j = \frac{1}{k},
\end{align*}
and therefore $\mu$ is a feasible solution of \eqref{eq:linear_program}. 

Since $\widehat{\mu}(\bR) = \mu_p(\bR)=1$, we may set $f(0)=0$ in the dual representation of the Wasserstein-1 distance (cf. Lemma \ref{lemma.dual}). Therefore,
\begin{align}
\text{W}_1(\widehat{\mu}, \mu_p) &= \sup_{\|f\|_{\text{Lip}}\le 1, f(0)=0} \int_0^1 f(x)\left(\widehat{\mu}(dx) - \mu_p(dx) \right) \nonumber \\
&\le  \sup_{\|f\|_{\text{Lip}}\le 1, f(0)=0}  \sum_{m=1}^M\int_0^1 f(x)\left(\widehat{\mu}_m(dx) - \mu_m(dx) \right) + (\mu_p - \mu)(\bR) \nonumber\\
&\le \sup_{\|f\|_{\text{Lip}}\le 1, f(0)=0}  \sum_{m=1}^M\left[ \int_{\widetilde{I}_m} \left( f(x) - \sum_{d=0}^D a_{m,d}(x-x_m)^d \right) \left(\widehat{\mu}_m(dx) - \mu_m(dx) \right) \right. \nonumber \\
&\left. \qquad \sum_{d=0}^D a_{m,d}\left(\int_{\widetilde{I}_m}(x-x_m)^d\widehat{\mu}_m(dx) - \int_{\widetilde{I}_m}(x-x_m)^d\mu_m(dx) \right) \right] + n^{-4}, \label{eq:Wasserstein_bound}
\end{align}
where $a_{m,d}$ can be any real coefficients. We set these coefficients so that the degree-$D$ polynomial $P_m(x) = \sum_{d=0}^D a_{m,d}(x-x_m)^d$ satisfies the inequality \eqref{eq.approx_pointwise} on the interval $\widetilde{I}_m$ in Lemma \ref{lemma:jackson}. Then according to the pointwise inequality \eqref{eq.approx_pointwise}, for $m=1$ we have
\begin{align*}
\left| f(x) - \sum_{d=0}^D a_{1,d}(x-x_1)^d \right| \le \frac{C\sqrt{c_1}}{c_2}\cdot \sqrt{\frac{x}{n\log n}}, \qquad \forall x\in \widetilde{I}_1. 
\end{align*}
For $m\ge 2$, the norm bound \eqref{eq.approx_norm} leads to
\begin{align*}
\left| f(x) - \sum_{d=0}^D a_{m,d}(x-x_m)^d \right| \le \frac{C}{c_2}\cdot \frac{\widetilde{\ell}_m}{\log n}\le \frac{4C\sqrt{c_1}}{c_2}\cdot \sqrt{\frac{x}{n\log n}}, \qquad \forall x\in \widetilde{I}_m, 
\end{align*}
where the last inequality follows from $\widetilde{\ell}_m \le 4\sqrt{c_1x\log n/n}$ for all $x\in \widetilde{I}_m$ with $m\ge 2$. Combining the above cases, we conclude that for any $m\in [M]$ and $x\in \widetilde{I}_m$, 
\begin{align}\label{eq:approx_error}
 \left| f(x) - \sum_{d=0}^D a_{m,d}(x-x_m)^d \right| \le \frac{4C\sqrt{c_1}}{c_2}\cdot \sqrt{\frac{x}{n\log n}}. 
\end{align}
By choosing $x=x_m$ in \eqref{eq:approx_error}, we conclude that the same result (possibly with coefficients doubled) still holds if we set $a_{m,0} = f(x_m)$ for each $m\in [M]$.\footnote{The first interval requires some special attention since $x\in \widetilde{I}_1$ can be as small as zero. However, since $x_1= 0$, the pointwise inequality \eqref{eq.approx_pointwise} already implies that $a_{1,0}=0$, and setting $a_{1,0}=f(x_1)$ makes the coefficient unchanged.}

Next we derive upper bounds on the magnitude of the coefficients $a_{m,d}$. Note that \eqref{eq:approx_error} implies that for any $m\in [M]$ and $x\in \widetilde{I}_m$, 
\begin{align*}
 \left| f(x_m) - \sum_{d=0}^D a_{m,d}(x-x_m)^d \right| &\le |f(x) - f(x_m)| +  \left| f(x) - \sum_{d=0}^D a_{m,d}(x-x_m)^d \right| \\
 &\le \widetilde{\ell}_m + \frac{4C}{c_2}\cdot \widetilde{\ell}_m = \left(1 + \frac{4C}{c_2}\right)\widetilde{\ell}_m. 
\end{align*}
Therefore, Lemma \ref{lemma:coefficients} applied to the shifted interval $\widetilde{I}_m - x_m$ gives that for $d\ge 1$, 
\begin{align}\label{eq:coefficient_bound}
|a_{m,d}| \le \left(1 + \frac{4C}{c_2}\right)\widetilde{\ell}_m^{1-d}\cdot \begin{cases}
2^{9D/2+1}& \text{if }m=1 \\
(1+\sqrt{2})^D & \text{if }m\ge 2
\end{cases} \le 2\left(1+\frac{4C}{c_2}\right) \widetilde{\ell}_m^{1-d}\cdot n^{9c_2/2}. 
\end{align}
Moreover, for $d=0$, the $1$-Lipschitz assumption of $f$ gives
\begin{align}\label{eq:coefficient_difference}
|a_{m,0} - a_{m-1,0}| = |f(x_m) - f(x_{m-1})| \le |x_m - x_{m-1}| \le \widetilde{\ell}_m, \qquad \forall m\in [M]. 
\end{align}

Combining \eqref{eq:Wasserstein_bound}--\eqref{eq:coefficient_difference} and using the Abel transformation $$\sum_{m=1}^M a_m z_m = \sum_{m=1}^M (a_m - a_{m-1})\sum_{m'\ge m} z_{m'}$$ with $a_0\triangleq 0$, we conclude that
\begin{align*}
\text{W}_1(\widehat{\mu}, \mu_p) &\le C'\sum_{m=1}^M \left[\int_{\widetilde{I}_m} \sqrt{\frac{x}{n\log n}}(\widehat{\mu}_m(dx) + \mu_m(dx)) \right. \\
& \qquad + n^{9c_2/2} \sum_{d=1}^D \widetilde{\ell}_m^{1-d}\left|\int_{\widetilde{I}_m}(x-x_m)^d\widehat{\mu}_m(dx) - \int_{\widetilde{I}_m}(x-x_m)^d\mu_m(dx) \right| \\
&\left. \qquad + \widetilde{\ell}_m \left|\sum_{m'\ge m} \widehat{\mu}_{m'}(\widetilde{I}_{m'}) - \sum_{m'\ge m} \mu_{m'}(\widetilde{I}_{m'})  \right| \right] + n^{-4} \\
&\le C' \int_0^1  \sqrt{\frac{x}{n\log n}}(\widehat{\mu}(dx) + \mu(dx))  + C'n^{9c_2/2}k^{-1}(L(\widehat{\mu}_0, \widehat{M}) + L(\mu, \widehat{M})) + n^{-4},
\end{align*}
where the last step follows from the definition of $L$ in \eqref{eq:linear_program} and the triangle inequality. Since $\mu$ is a feasible solution of \eqref{eq:linear_program}, we have $L(\widehat{\mu}_0, \widehat{M})\le L(\mu, \widehat{M})$. Moreover, the feasibility condition of $\mu$ implies $\mu(\bR)\le 1$ and $\int_0^1 x\mu(dx)\le k^{-1}$, therefore the Cauchy--Schwartz inequality gives
\begin{align*}
\int_0^1 \sqrt{x} \mu(dx) \le \sqrt{\mu(\bR)\cdot \int_0^1 x\mu(dx)} \le \frac{1}{\sqrt{k}}. 
\end{align*}
The same result also holds for $\widehat{\mu}_0$. 

A combination of the above inequalities gives the deterministic upper bound of the Wasserstein-$1$ distance: 
\begin{align}\label{eq:Wasserstein_final}
k\cdot \text{W}_1(\widehat{\mu}, \mu_p) \le 2C'\left(\sqrt{\frac{k}{n\log n}} + n^{9c_2/2}\cdot L(\mu, \widehat{M}) \right) + \frac{k}{n^4},
\end{align}
where the constant $C'>0$ depends only on $(c_1,c_2)$. 

\subsubsection{Bounding the expectation of $L(\mu, \widehat{M})$}
This section is devoted to the upper bound of the expected loss $\bE[L(\mu, \widehat{M})]$, where it suffices to show that the smoothed moment estimators $\widehat{M}_{m,d}$ in \eqref{eq:est_smoothed_moments} are close to the true smoothed moments $M_{m,d}$ in \eqref{eq:smoothed_moments}. This is the main subject of the following lemma. 
\begin{lemma}\label{lemma:local_moments_expectation}
Consider the estimator $\widehat{M}_{m,d}$ in \eqref{eq:est_smoothed_moments} and the measure $\mu_m$ in \eqref{eq:local_measure}. For $c_1>0$ large enough, there exist constants $C,C'>0$ depending only on $c_1$ such that for $1\le d\le D$, 
\begin{align*}
\bE\left|k\cdot \int_{\widetilde{I}_m} (x-x_m)^d \mu_m(dx) - \widehat{M}_{m,d} \right| \le Cn^{C'c_2}\widetilde{\ell}_m^d\left(\sqrt{k_m} + \frac{k}{n^5}\right),
\end{align*}
and for $d=0$, 
\begin{align*}
\bE\left|\sum_{m'\ge m} \left(k\cdot \mu_{m'}(\widetilde{I}_{m'}) - \widehat{M}_{m',0}\right) \right| \le C\left(\sqrt{k_m\log n} + \log n + \frac{k}{n^5}\right),
\end{align*}
where $k_m$ is the ``effective support size" of the restriction of $p$ on $I_m$: 
\begin{align}\label{eq.effective_sample_size}
k_m \triangleq \sum_{j=1}^k \bP\left(\mathsf{Poi}\left(\frac{np_j}{2}\right) \in \frac{n}{2}I_m\right). 
\end{align}
\end{lemma}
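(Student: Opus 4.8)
The plan is to compare $\widehat M_{m,d}$ — which uses the truncated function $\widetilde g_{d,x_m}$ — with the idealized estimator
\[
\widehat M^{(g)}_{m,d} \triangleq \sum_{j=1}^k\sum_{s\in nI_m/2}\bP(\mathsf{B}(h_j,1/2)=s)\, g_{d,x_m}\Bigl(\tfrac{h_j-s}{n/2}\Bigr)
\]
built from the uncut $g_{d,x_m}$, for which the identity cited from \cite{Withers1987} gives $\bE[\widehat M^{(g)}_{m,d}]=M_{m,d}$, the smoothed moment of \eqref{eq:smoothed_moments} (a sum over \emph{all} symbols $j$). Writing $R_{m,d}\triangleq k\int_{\widetilde I_m}(x-x_m)^d\mu_m(dx)=\sum_{j:\,p_j\in\widetilde I_m}(p_j-x_m)^d\bP(\mathsf{Poi}(np_j/2)\in\tfrac n2 I_m)$ for the target quantity, I decompose
\[
R_{m,d}-\widehat M_{m,d}=\underbrace{\bigl(M_{m,d}-\widehat M^{(g)}_{m,d}\bigr)}_{\text{fluctuation}}-\underbrace{\bigl(M_{m,d}-R_{m,d}\bigr)}_{B_1:\ \text{truncation bias}}-\underbrace{\bigl(\widehat M_{m,d}-\widehat M^{(g)}_{m,d}\bigr)}_{B_2:\ \text{cut-off bias}},
\]
and bound the $L^1$-norm of each piece; since the fluctuation term is centered, $\bE|M_{m,d}-\widehat M^{(g)}_{m,d}|\le\sqrt{\var(\widehat M^{(g)}_{m,d})}$.

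The two biases are controlled by the localization estimates of Lemma~\ref{lemma.localization}. For $B_1=\sum_{j:\,p_j\notin\widetilde I_m}(p_j-x_m)^d\bP(\mathsf{Poi}(np_j/2)\in\tfrac n2 I_m)$ I split the sum according to whether $p_j$ lies in the intervals adjacent to $\widetilde I_m$ or further away: in the adjacent case $|p_j-x_m|\lesssim\widetilde\ell_m$, hence $|p_j-x_m|^d\le n^{O(c_2)}\widetilde\ell_m^d$ since $d\le D=c_2\log n$, while the Poisson probability of landing in $\tfrac n2 I_m$ is $n^{-\Omega(c_1)}$ by the multiplicative separation built into $\widetilde I_m$; in the far case $|p_j-x_m|^d\le1$ but the Poisson probability is at most $\exp(-\Omega(np_j))$, which dominates any $n^{O(c_2)}$ factor because $np_j\gg\log n$. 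Summing over the $\le k$ symbols gives $|B_1|\le n^{O(c_2)}\widetilde\ell_m^d\cdot k/n^{5}$ once $c_1$ is large. For $B_2$, note that $\widetilde g_{d,x_m}-g_{d,x_m}$ vanishes on $[x_{m,\text{L}},x_{m,\text{R}}]\supsetneq\widetilde I_m$; for $j$ with $p_j\in\widetilde I_m$ the thinned count $2(h_j-s)/n$ (distributed as $\mathsf{Poi}(np_j/2)$ after marginalizing) escapes this interval only with super-polynomially small probability, and there $|\widetilde g_{d,x_m}-g_{d,x_m}|\le n^{O(c_2)}$ crudely; for $j$ with $p_j\notin\widetilde I_m$ the entire contribution is already absorbed into the estimate for $B_1$. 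Hence $\bE|B_2|\le n^{O(c_2)}\widetilde\ell_m^d\cdot k/n^5$.

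For the fluctuation term I Poissonize so that the histograms $h_1,\dots,h_k$ become independent; then $\widehat M^{(g)}_{m,d}=\sum_j Y_j$ with each $Y_j$ a function of $h_j$ alone, so $\var(\widehat M^{(g)}_{m,d})=\sum_j\var(Y_j)\le\sum_j\bE[Y_j^2]$. The key quantitative input is a pointwise bound $|g_{d,x_m}(x)|\le n^{O(c_2)}\widetilde\ell_m^d$ on (a small enlargement of) $\widetilde I_m$; this follows from a direct estimate of the defining binomial sum, using that the shifts $2d''/n$ ($d''<D$) are negligible relative to $\widetilde\ell_m$ once $c_2\ll c_1$, so that $g_{d,x_m}$ is only a mild perturbation of $(x-x_m)^d$ (cf.\ the coefficient estimates behind Lemma~\ref{lemma:coefficients}). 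Combined with the far-away decay handled as above, $|Y_j|\le n^{O(c_2)}\widetilde\ell_m^d\cdot\bP(\mathsf{B}(h_j,1/2)\in\tfrac n2 I_m)+(\text{super-polynomially small})$, so taking expectations, using $\bE[\bP(\mathsf{B}(h_j,1/2)\in\tfrac n2 I_m)]=\bP(\mathsf{Poi}(np_j/2)\in\tfrac n2 I_m)$ and $\bP(\cdot)\le1$, summing over $j$ and invoking the definition \eqref{eq.effective_sample_size} of $k_m$, gives $\var(\widehat M^{(g)}_{m,d})\le n^{O(c_2)}\widetilde\ell_m^{2d}\bigl(k_m+k^2/n^{10}\bigr)$. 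Therefore $\bE|M_{m,d}-\widehat M^{(g)}_{m,d}|\le n^{O(c_2)}\widetilde\ell_m^d(\sqrt{k_m}+k/n^5)$, and adding the two bias bounds completes the case $1\le d\le D$.

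The $d=0$ case follows the same scheme applied to the partial sums $\sum_{m'\ge m}\widehat M_{m',0}$: here $\widetilde g_{0,x_m}\equiv1$ so there is no cut-off bias, and $\sum_{m'\ge m}\widehat M_{m',0}$ collapses to a single tail-count statistic $\sum_j\bP(\mathsf{B}(h_j,1/2)\ge c_1(m-1)^2\log n/2)$, a sum of $[0,1]$-valued functions of the independent $h_j$; the truncation bias is bounded as for $B_1$ (now a Poisson tail summed over $O(M)$ intervals, still super-polynomially small in total). For the variance, only symbols whose Poisson mean $np_j/2$ is comparable in scale to the $m$-th interval contribute appreciably to the per-symbol variance, and these occupy $O(\log n)$ of the relevant local intervals, giving $\sum_j\var(\cdot)\lesssim k_m\log n$ and hence the $\sqrt{k_m\log n}+\log n$ bound (the additive $\log n$ absorbing a lower-order contribution). \textbf{Main obstacle.} I expect the genuine technical heart to be the pointwise control of $g_{d,x_m}$ (and $\widetilde g_{d,x_m}$) on the truncation interval — showing it is only $n^{O(c_2)}\widetilde\ell_m^d$ rather than the naive and catastrophic $(x_m+\widetilde\ell_m)^d$ — since this relies on the near-exact cancellation in the defining binomial sum being only slightly disturbed by the $2d''/n$ shifts, which is precisely where $c_2\ll c_1$ enters. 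A secondary chore is to ensure the localization estimates of Lemma~\ref{lemma.localization} are polynomially small with exponent growing in $c_1$, so that together with the $n^{O(c_2)}$ prefactors and the $\sum_{j\le k}$ they collapse into the single clean error term $k/n^5$.
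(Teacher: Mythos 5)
Your three-term decomposition is sound (and algebraically equivalent to the paper's: your $B_1$ is the paper's first, deterministic, term; the paper's bias term $\bE[\widehat M_{m,d}]-M_{m,d}$ is exactly $\bE[B_2]$; and the residual fluctuations match). The genuinely different step is that you put the variance burden on the \emph{uncut} estimator $\widehat M^{(g)}_{m,d}$, whereas the paper applies Bennett's inequality directly to $\widehat M_{m,d}$, i.e.\ to the sum of the $Z_j = \sum_{s\in nI_m/2}\bP(\mathsf{B}(h_j,1/2)=s)\,\widetilde g_{d,x_m}((h_j-s)/(n/2))$, which are bounded in $\ell^\infty$ by $Cn^{C'c_2}\widetilde\ell_m^d$ \emph{because} of the cut-off. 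This choice makes both $|Z_j|$ and $\bE[Z_j^2]$ trivial to bound, so the whole variance step collapses to a one-line $\ell^\infty$ argument; the cost is reassigned to a deterministic bias estimate.

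The gap in your version is exactly where you flag it, but "super-polynomially small" undersells what is needed. You must show that the residual in $|Y_j|$ is not merely $n^{-\Omega(c_1)}$ but rather $n^{-\Omega(c_1)}\cdot n^{O(c_2)}\widetilde\ell_m^{2d}$ (and the truncation bias must be $\lesssim \widetilde\ell_m^d k/n^5$, not $k/n^5$). When $m$ is small and $d\asymp D = c_2\log n$, the target scale $\widetilde\ell_m^d$ is already $\exp(-\Theta(\log^2 n))$, so a contribution that is only $n^{-c_1}$ — or even $\exp(-\Theta(\log^{3/2}n))$ — is catastrophically larger than the first term $n^{O(c_2)}\widetilde\ell_m^{2d}k_m$. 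The paper's key mechanism is the pointwise product estimate: for $q\notin[x_{m,\mathrm L},x_{m,\mathrm R}]$ and $c_1$ large enough (relative to $c_2$),
\[
|q-x_m|^d\cdot\exp\!\left(-c'n\min\!\left\{\frac{|q-x_m|^2}{\max\{x_m,q\}},\ |q-x_m|\right\}\right)\ \le\ C\,\widetilde\ell_m^d\,n^{-5},
\]
obtained by observing that the maximizer of the left side lies strictly inside $[x_{m,\mathrm L},x_{m,\mathrm R}]$ once $c_2\ll c_1$, so on the complement the function is decreasing and its value at the boundary already carries the factor $\widetilde\ell_m^d n^{-\Omega(c_1)}$. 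To make your route rigorous you would need this estimate with $d$ replaced by $2d$ (to bound $\bE[g_{d,x_m}^2(\widehat p_{j,2})]\cdot\bP(\widehat p_{j,1}\in I_m)$ for $p_j\notin\widetilde I_m$) and a matching control of the escape contribution when $p_j\in\widetilde I_m$; neither is "a secondary chore" — it is the same estimate the paper uses, and the $\widetilde\ell_m^d$ factor must appear in the tail, not just a free polynomial decay. A minor further discrepancy: for $d=0$, the paper's three-case analysis yields $\sum_j\var(Y_j)\le k_m+k/n^5$ (no $\log n$); the $\sqrt{k_m\log n}$ in the lemma comes from converting a Bennett high-probability bound to an $L^1$ bound, not from an extra $\log n$ in the variance as your "$O(\log n)$ intervals" heuristic suggests.
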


As a direct consequence of Lemma \ref{lemma:local_moments_expectation}, the expected loss is upper bounded by
\begin{align*}
\bE[L(\mu, \widehat{M})] &\le Cn^{C'c_2}\sum_{m=1}^M \widetilde{\ell}_m \cdot (D+1)\left(\sqrt{k_m\log n} + \log n + \frac{k}{n^5}\right) \\
&\le C(c_2\log n + 1)n^{C'c_2}\log n\cdot \sum_{m=1}^M \widetilde{\ell}_m \left(\sqrt{k_m} + 1 + \frac{k}{n^5}\right).
\end{align*}
To upper bound the above quantity, we shall need the following lemma. 
\begin{lemma}\label{lemma:total_variance}
There exists an absolute constant $C>0$ depending only on $c_1$ such that
\begin{align}
\sum_{m=1}^M \widetilde{\ell}_m \sqrt{k_m} \le C\left( \left(\frac{\log n}{n}\right)^{\frac{1}{3}} + \sqrt{\frac{k}{n^5}}\right). 
\end{align}
\end{lemma}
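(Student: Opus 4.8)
The plan is to handle the first interval separately, pass from $k_m$ to an integer counting quantity for $m\ge 2$ via Lemma \ref{lemma.localization}, and then run a layer-cake (Abel) summation whose success crucially uses the integrality of that count together with the ``budget'' $\sum_{m\ge 2}m^2\widetilde k_m=O(n/\log n)$ inherited from $\sum_j p_j=1$. From \eqref{eq:local_interval_enlarged}, $\widetilde\ell_m=\frac{c_1\log n}{n}\big[(m+\tfrac14)^2-(m-\tfrac54)_+^2\big]\asymp \frac{c_1\log n}{n}\max(m,1)$; in particular $\widetilde\ell_m=O(1)$ for all $m$ as $m\le M=n/(c_1\log n)$, and $\sum_m\widetilde\ell_m^2\lesssim n/\log n$. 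For $m=1$ I simply note $k_1\le \sum_j\bP(\mathsf{Poi}(np_j/2)\ge 1)\le \sum_j np_j/2=n/2$, so $\widetilde\ell_1\sqrt{k_1}\lesssim \tfrac{\log n}{n}\sqrt n=\tfrac{\log n}{\sqrt n}\ll (\log n/n)^{1/3}$. For $m\ge 2$, split $k_m=\sum_{j:\,p_j\in\widetilde I_m}b_{j,m}+\sum_{j:\,p_j\notin\widetilde I_m}b_{j,m}$ with $b_{j,m}\triangleq \bP(\mathsf{Poi}(np_j/2)\in\tfrac n2 I_m)\in[0,1]$. The first sum is at most $\widetilde k_m\triangleq \#\{j:\,p_j\in\widetilde I_m\}\in\ZZ_{\ge 0}$; by Lemma \ref{lemma.localization} the sum over $m\ge 2$ of the second sum is a large inverse power of $n$ times $k$, so by $\sqrt{a+b}\le\sqrt a+\sqrt b$ and Cauchy--Schwarz (using $\sum_m\widetilde\ell_m^2\lesssim n/\log n$) the ``far'' part contributes to $\sum_m\widetilde\ell_m\sqrt{k_m}$ only the lower-order term $C\sqrt{k/n^5}$. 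It therefore remains to bound $\sum_{m\ge 2}\widetilde\ell_m\sqrt{\widetilde k_m}$.

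\emph{The budget.} For $m\ge 2$, $p_j\in\widetilde I_m$ forces $p_j\ge \frac{c_1\log n}{n}(m-\tfrac54)^2\gtrsim \frac{c_1\log n}{n}m^2$; moreover each $p_j$ lies in at most two of the sets $\widetilde I_m$ with $m\ge 2$, and these satisfy $m^2\lesssim 1+\frac{np_j}{c_1\log n}$. Hence, writing $B\triangleq \sum_{m\ge 2}m^2\widetilde k_m=\sum_j\sum_{m\ge 2:\,p_j\in\widetilde I_m}m^2$, we get $B\lesssim \#\{j:\,p_j\gtrsim \log n/n\}+\frac{n}{c_1\log n}\sum_j p_j\lesssim \frac{n}{c_1\log n}$, where the first count is at most $16n/(9c_1\log n)$ since each such symbol carries mass $\ge 9c_1\log n/(16n)$.

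\emph{Layer-cake bound.} Since each $\widetilde k_m$ is a non-negative integer, $\sqrt{\widetilde k_m}=\sum_{i=1}^{\widetilde k_m}(\sqrt i-\sqrt{i-1})\le \sum_{i\ge 1}i^{-1/2}\1(\widetilde k_m\ge i)$, so interchanging summations,
\begin{align*}
\sum_{m\ge 2}\widetilde\ell_m\sqrt{\widetilde k_m}\ \le\ \sum_{i\ge 1}\frac{A_i}{\sqrt i},\qquad A_i\triangleq \sum_{m\ge 2:\ \widetilde k_m\ge i}\widetilde\ell_m .
\end{align*}
The set $S_i\triangleq\{m\ge 2:\ \widetilde k_m\ge i\}$ satisfies $i\sum_{m\in S_i}m^2\le \sum_{m\ge 2}m^2\widetilde k_m=B$, i.e.\ $\sum_{m\in S_i}m^2\le B/i$; as $S_i$ consists of distinct integers $\ge 2$ this forces $|S_i|\lesssim (B/i)^{1/3}$, and then Cauchy--Schwarz gives $\sum_{m\in S_i}m\le \sqrt{|S_i|\cdot B/i}\lesssim (B/i)^{2/3}$, hence $A_i\lesssim \frac{c_1\log n}{n}(B/i)^{2/3}$. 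Therefore
\begin{align*}
\sum_{m\ge 2}\widetilde\ell_m\sqrt{\widetilde k_m}\ \lesssim\ \frac{c_1\log n}{n}\,B^{2/3}\sum_{i\ge 1}i^{-1/2-2/3}\ =\ \frac{c_1\log n}{n}\,B^{2/3}\sum_{i\ge 1}i^{-7/6}\ \lesssim\ \frac{c_1\log n}{n}\Big(\frac{n}{c_1\log n}\Big)^{2/3}\ \asymp\ \Big(\frac{\log n}{n}\Big)^{1/3},
\end{align*}
since $\sum_{i\ge 1}i^{-7/6}=\zeta(7/6)<\infty$. Combining the three pieces proves the lemma.

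\emph{Anticipated main obstacle.} The delicate point is that integrality of $\widetilde k_m$ is indispensable: the continuous relaxation ``maximize $\sum_m\widetilde\ell_m\sqrt{x_m}$ over $x_m\ge 0$ subject only to $\sum_m m^2 x_m\le B$'' has value $\Theta(1)$, not $o(1)$, so one must genuinely exploit that $\widetilde k_m\ge 1$ whenever it is positive — this is what forces each occupied set $S_i$ to be small and clustered near small $m$, so that already $A_1\lesssim (\log n/n)^{1/3}$ rather than the trivial $A_1\le \sum_m\widetilde\ell_m\asymp n/\log n$; equivalently, the worst case is $\widetilde k_m\in\{0,1\}$ supported on $m\le (n/\log n)^{1/3}$. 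A careless argument (Cauchy--Schwarz across all $m$, dyadic blocking, or a single-threshold split on the size of $\widetilde k_m$) loses a power and yields only $(\log n/n)^{1/6}$ or even $\Theta(1)$; the layer-cake decomposition with the convergent exponent $7/6=\tfrac23+\tfrac12$ is precisely what recovers the sharp $1/3$. A second, more routine, difficulty is checking that the passage $k_m\mapsto\widetilde k_m$ is lossless up to the advertised $\sqrt{k/n^5}$ error, for which the precise localization estimate in Lemma \ref{lemma.localization} is invoked.
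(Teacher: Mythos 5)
Your argument is correct in its main thrust and reaches the same exponent as the paper, but by a genuinely different route for the core step. The paper sets $k_m'=\#\{j:p_j\in\widetilde I_m\}$, notes $k_m\le k_m'+k/n^5$ via Lemma~\ref{lemma.localization}, bounds the number of occupied intervals by $|\calM|\lesssim(n/\log n)^{1/3}$ (each occupied $\widetilde I_m$ with $m\ge 2$ carries probability mass $\gtrsim m^2\log n/n$ --- the same integrality-plus-budget fact you isolate), and then finishes with a \emph{single} Cauchy--Schwarz, $\sum_{m\in\calM}m\sqrt{k_m'}\le\sqrt{|\calM|}\cdot\big(\sum_{m\in\calM}m^2k_m'\big)^{1/2}\lesssim\big((n/\log n)^{1/3}\cdot n/\log n\big)^{1/2}=(n/\log n)^{2/3}$. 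Your layer-cake decomposition over the level sets $S_i=\{m:\widetilde k_m\ge i\}$ is a refinement of this: the paper uses only the $i=1$ level ($|S_1|\lesssim B^{1/3}$) together with the full budget $B$, which already suffices, while you distribute the budget across all levels and recover the same exponent via $\zeta(7/6)<\infty$. Accordingly, your ``anticipated obstacle'' somewhat overstates the case against a global Cauchy--Schwarz: it does yield the sharp $(\log n/n)^{1/3}$, provided it is paired with the cardinality bound $|\calM|\lesssim B^{1/3}$, which is exactly where integrality enters in the paper's version. Both arguments exploit the same two facts and both are valid; yours buys a more transparent accounting of why the worst case is $\widetilde k_m\in\{0,1\}$ on $m\lesssim(n/\log n)^{1/3}$, at the cost of extra machinery.

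One quantitative slip worth fixing: your handling of the ``far'' contribution does not deliver the stated $\sqrt{k/n^5}$. Cauchy--Schwarz with $\sum_m\widetilde\ell_m^2\lesssim n/\log n$ and $\sum_m F_m\le M\cdot kn^{-5}$ gives only about $\sqrt{k/n^3}$, a factor $n$ worse than claimed (harmless for the downstream application in \eqref{eq:expectation_final}, but not the lemma as stated). The paper's per-interval route is both simpler and sharp: $k_m\le k_m'+k/n^5$, then $\sqrt{a+b}\le\sqrt a+\sqrt b$ and $\sum_m\widetilde\ell_m\le 2$ (the $\widetilde I_m$ cover a set of total length $O(1)$ with overlap multiplicity two) give exactly $2\sqrt{k/n^5}$. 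Note also that your loose bounds $\widetilde\ell_m=O(1)$ and $\sum_m\widetilde\ell_m^2\lesssim n/\log n$ come from taking the paper's ``$M=n/(c_1\log n)$'' literally; since the right endpoint of $I_M$ must be $1$, the intended value is $M=\sqrt{n/(c_1\log n)}$ (as used in Section~\ref{subsec.poisson_approx}), under which $\sum_m\widetilde\ell_m^2\lesssim\sqrt{\log n/n}$ and your Cauchy--Schwarz for the far part would in fact recover $\sqrt{k/n^5}$.
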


By Lemma \ref{lemma:total_variance} and the inequality $\sum_{m=1}^M \widetilde{\ell}_m\le 2$, we arrive at the final upper bound: 
\begin{align}\label{eq:expectation_final}
\bE[L(\mu, \widehat{M})] = O\left(n^{-1/3 + O(c_2)}\log n + \sqrt{\frac{k}{n^5}} + \frac{k}{n^5} \right). 
\end{align}

\subsubsection{Bounding the deviation probability of $L(\mu, \widehat{M})$}
In this section we apply the McDiarmid's inequality (cf. Lemma \ref{lemma:mcdiarmid}) for the deviation probability of $L(\mu, \widehat{M})$, where a key step is to show that each moment estimator $\widehat{M}_{m,d}$ does not change much by altering one observation. 

\begin{lemma}\label{lemma:bounded_diff}
For $m\in [M]$, $0\le d\le D$, let $\Delta_{m,d}(h)$ be the difference between the estimators $\widehat{M}_{m,d}$ given the histograms $h=(h_1,\cdots,h_k)$ and $h'=(h_1,\cdots,h_{j-1},h_{j}+1,h_{j+1},\cdots,h_k)$, respectively. Let $\calM\subseteq [M]$ be the set of indices of ``active" local intervals, i.e.
\begin{align*}
\calM = \{m\in [M]: nx_{m,\text{\rm L}} \le h_j \le n x_{m,\text{\rm R}} \}. 
\end{align*}
Then there exists an absolute constant $C>0$ depending on $(c_1, c_2)$ such that for $1\le d\le D$, 
\begin{align*}
|\Delta_{m,d}| \le C\cdot \begin{cases}
\frac{\log n}{n}\cdot (x_{m,\text{\rm R}} - x_{m,\text{\rm L}})^{d-1} & \text{if } m\in \calM, \\
n^{-5}\cdot (x_{m,\text{\rm R}} - x_{m,\text{\rm L}})^{d} & \text{if } m\notin \calM. 
\end{cases}
\end{align*}
Also, for $d=0$, it holds that
\begin{align*}
\left|\sum_{m'\ge m} \Delta_{m,0} \right| \le C\cdot \begin{cases}
\log n/(n(x_{m,\text{\rm R}} - x_{m,\text{\rm L}})) & \text{if } m\in \calM, \\
n^{-5}& \text{if } m\notin \calM. 
\end{cases}
\end{align*}
\end{lemma}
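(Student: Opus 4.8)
The plan is to reduce $\Delta_{m,d}$ to a one‑coordinate perturbation, use an exact finite‑difference identity for the functions $g_{d,x_m}$ of \eqref{eq:g_function}, and separately control the two mechanisms by which incrementing $h_j$ can move $\widehat M_{m,d}$ in \eqref{eq:est_smoothed_moments}: a shift of the argument of $\widetilde g_{d,x_m}$ by $2/n$, and a shift of the summation window $nI_m/2$ by one. Since $\widehat M_{m,d}$ is a sum over symbols and only the $j$‑th summand contains $h_j$, we have $\Delta_{m,d}=F_m(h_j+1)-F_m(h_j)$ with $F_m(h)=\sum_{s\in nI_m/2}\bP(\mathsf{B}(h,\tfrac12)=s)\,\widetilde g_{d,x_m}(\tfrac{h-s}{n/2})$. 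Using the coupling $\mathsf{B}(h+1,\tfrac12)=\mathsf{B}(h,\tfrac12)+\mathsf{Bern}(\tfrac12)$ (the extra coin independent of $S\sim\mathsf{B}(h_j,\tfrac12)$), I would write
\begin{align*}
\Delta_{m,d} &= \tfrac12\,\bE\Big[\1(S\in nI_m/2)\Big(\widetilde g_{d,x_m}\big(\tfrac{h_j+1-S}{n/2}\big)-\widetilde g_{d,x_m}\big(\tfrac{h_j-S}{n/2}\big)\Big)\Big] \\
&\quad + \tfrac12\,\bE\Big[\big(\1(S+1\in nI_m/2)-\1(S\in nI_m/2)\big)\,\widetilde g_{d,x_m}\big(\tfrac{h_j-S}{n/2}\big)\Big],
\end{align*}
the first term being the ``argument‑shift'' and the second the ``window‑shift'' contribution.

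The crucial algebraic input is the identity $g_{d,x_m}(x)-g_{d,x_m}(x-\tfrac2n)=\tfrac{2d}{n}\,g_{d-1,x_m}(x-\tfrac2n)$, which I would prove by the substitution $x=2u/n$, under which $\prod_{d''=0}^{d'-1}(x-2d''/n)=(2/n)^{d'}u^{(d')}$ is a scaled falling factorial, and then by combining $u^{(d')}-(u-1)^{(d')}=d'(u-1)^{(d'-1)}$ with $d'\binom{d}{d'}=d\binom{d-1}{d'-1}$. Since $\widetilde g_{d,x_m}$ equals $g_{d,x_m}$ on $[x_{m,\mathrm L},x_{m,\mathrm R}]$ and is constant outside it, this identity shows $\widetilde g_{d,x_m}$ is Lipschitz with constant at most $\tfrac{2d}{n}\sup_{x\in[x_{m,\mathrm L},x_{m,\mathrm R}]}|g_{d-1,x_m}(x)|$; moreover, by the choice of $x_{m,\mathrm L},x_{m,\mathrm R}$ slightly outside $\widetilde I_m$, whenever $m\notin\calM$ all arguments $\tfrac{h_j-s}{n/2}$ (before and after the $2/n$ shift), $s\in nI_m/2$, lie in the flat part of $\widetilde g_{d,x_m}$, so the argument‑shift term vanishes identically. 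I would then prove the sup‑norm estimate $\sup_{x\in[x_{m,\mathrm L},x_{m,\mathrm R}]}|g_{e,x_m}(x)|\le C_e\,(x_{m,\mathrm R}-x_{m,\mathrm L})^{e}$ for $1\le e\le D$, by writing $g_{e,x_m}(x)=(x-x_m)^e+R_e(x)$, where $R_e$ collects the terms created by the $2d''/n$ shifts: on $[x_{m,\mathrm L},x_{m,\mathrm R}]$ one has $|x-x_m|\lesssim x_{m,\mathrm R}-x_{m,\mathrm L}$, and every monomial of $R_e$ carries a genuine power of $2d''/n=O(D/n)$, which for $c_1$ large relative to $c_2$ is negligible against the local scale $x_{m,\mathrm L}\asymp c_1m^2\log n/n$ (the interval $m=1$, where $x_1=0$, is handled separately and more easily since there $x\le x_{1,\mathrm R}\asymp c_1\log n/n$). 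Plugging $e=d-1$ gives the argument‑shift bound $O\!\big(\tfrac{\log n}{n}(x_{m,\mathrm R}-x_{m,\mathrm L})^{d-1}\big)$ for $m\in\calM$.

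For the window‑shift term, $\1(S+1\in nI_m/2)-\1(S\in nI_m/2)$ is supported on the (at most two) integer endpoints of $nI_m/2$, so this term is at most $\tfrac12\big(\bP(S=s_-)+\bP(S=s_+)\big)\cdot\sup_x|\widetilde g_{d,x_m}(x)|\le \tfrac12\big(\bP(S=s_-)+\bP(S=s_+)\big)\cdot C(x_{m,\mathrm R}-x_{m,\mathrm L})^{d}$. For $m\in\calM$ a local‑limit bound $\bP(\mathsf{B}(h_j,\tfrac12)=s)=O(h_j^{-1/2})$ with $h_j\asymp nx_m$ makes this of smaller order than the claimed $\tfrac{\log n}{n}(x_{m,\mathrm R}-x_{m,\mathrm L})^{d-1}$; for $m\notin\calM$ the endpoints $s_\pm$ are $\Omega(\sqrt{c_1\log n})$ standard deviations from the mean $h_j/2$, so a Chernoff bound — exactly the quantitative localization already exploited in Lemma~\ref{lemma.localization} — gives $\bP(S=s_\pm)\le n^{-5}$, hence the stated $n^{-5}(x_{m,\mathrm R}-x_{m,\mathrm L})^{d}$. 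Finally, for $d=0$ one has $\widetilde g_{0,x_m}\equiv 1$, so $\sum_{m'\ge m}\widehat M_{m',0}=\sum_{j'}\bP(\mathsf{B}(h_{j'},\tfrac12)\ge\theta_m)$ with $\theta_m$ the left endpoint of $nI_m/2$, and incrementing $h_j$ changes this by exactly $\tfrac12\bP(\mathsf{B}(h_j,\tfrac12)=\theta_m-1)$; a local‑limit estimate for $m\in\calM$ and a Chernoff tail bound for $m\notin\calM$ yield the two cases of the $d=0$ assertion.

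I expect the main obstacle to be the sup‑norm control of $g_{e,x_m}$ on the cutoff interval: a careless bound on the correction $R_e$ produces a factor of the form $(x+x_m)^e$, which overshoots by a possibly super‑polynomial factor $(m/2)^e$; the remedy is to bound $R_e$ monomial by monomial and observe that, for $c_1$ large enough (and recalling $c_2\le c_0\delta$), these form a rapidly decaying geometric‑type series around the main term $(x-x_m)^e$. The remaining points are routine bookkeeping: getting the first interval $m=1$ and the half‑open conventions for $I_m$ exactly right, and checking that the ``active set'' $\calM$ has $O(1)$ elements so that, when this lemma is fed into McDiarmid's inequality (Lemma~\ref{lemma:mcdiarmid}) in the next subsection, the sum of squared bounded‑difference constants is $\widetilde O(1/n)$.
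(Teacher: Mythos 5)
Your proof is correct and arrives at the same estimates, but it takes a genuinely different route through the decomposition. The paper splits $\Delta_{m,d}$ by add-and-subtract into an argument-shift term $D_1$ (same as yours) and a \emph{measure-change} term $D_2$ obtained by replacing $\mathsf{B}(h_j,\tfrac12)$ with $\mathsf{B}(h_j+1,\tfrac12)$, and then bounds $D_2$ by $\|\widetilde g_{d,x_m}\|_\infty$ times a difference of Binomial probabilities controlled via Lemma~\ref{lemma:cdf_diff} (the $O(1/\sqrt{h})$ bound on shifted Binomial CDFs). Your Bernoulli-coupling decomposition $\mathsf{B}(h_j+1,\tfrac12)\eqdistr S+\mathsf{Bern}(\tfrac12)$ replaces this with a \emph{window-shift} term that is supported exactly on the two integer endpoints of $nI_m/2$, which you then bound by a pointwise local-limit estimate $\bP(\mathsf{B}(h_j,\tfrac12)=s)=O(h_j^{-1/2})$ (active case) or Chernoff (inactive case). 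The two approaches have comparable strength, but yours is cleaner for two reasons: the window-shift localizes the measure change to two points, making the bookkeeping transparent, and it removes the need for Lemma~\ref{lemma:cdf_diff}, which is a somewhat specialized ingredient. Your observation that for $m\notin\calM$ the argument-shift term vanishes identically (because both arguments fall on the same flat side of the cutoff) is also a nice simplification over the paper's blunter route of bounding the whole $\Delta_{m,d}$ by $\|\widetilde g\|_\infty$ times two Chernoff-small interval probabilities. One place where you do extra work: the paper simply cites Lemma~\ref{lemma:charlier} for both the finite-difference identity $g_{d,x_m}(z+\tfrac2n)-g_{d,x_m}(z)=\tfrac{2d}{n}g_{d-1,x_m}(z)$ and the sup-norm bound $|g_{d,x_m}(z)|\le(2\Delta)^d$, whereas you propose to reprove both from scratch; the latter is precisely where you (correctly) anticipate the main technical care, and the paper's already-stated lemma would save you that effort. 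A small bookkeeping point: in the $d=0$ case your ``exactly $\tfrac12\bP(\mathsf{B}(h_j,\tfrac12)=\theta_m-1)$'' has an off-by-one in the endpoint (with $I_m$ left-open and $\theta_m$ integral, the relevant point is $\theta_m$, not $\theta_m-1$), but you have already flagged the half-open conventions as something to nail down, so this is not a real gap.
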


Based on Lemma \ref{lemma:bounded_diff}, we are ready to upper bound the difference in $L(\mu, \widehat{M})$ when we change the histogram from $h$ to $h'$ as in Lemma \ref{lemma:bounded_diff}. Note that $|\calM|\le 2$ always holds, and $\widetilde{\ell}_m\le x_{m,\text{R}} - x_{m,\text{L}} \le 2\widetilde{\ell}_m$, we conclude that
\begin{align*}
|L(\mu,\widehat{M}(h)) - L(\mu,\widehat{M}(h')) | &\le \sum_{m=1}^M \widetilde{\ell}_m\left(\sum_{d=1}^D \widetilde{\ell}_m^{-d}|\Delta_{m,d}| + \left|\sum_{m'\ge m} \Delta_{m,0} \right|\right) \\
&\le C \sum_{m\in \calM}\sum_{d=0}^D 2^d \cdot \frac{\log n}{n} + C\sum_{m\notin \calM} \sum_{d=0}^D 2^d \cdot \frac{\widetilde{\ell}_m}{n^5} \\
&\le C\left(\frac{2^{D+2}\log n}{n} + \frac{2^{D+2}}{n^5}\right) \\
&= O(n^{c_2-1}\log n).
\end{align*}
Hence, treating $L(\mu, \widehat{M})$ as a deterministic function of the observations $X_1, \cdots, X_N$ under the Poissonized model, it is clear that this function is invariant with the permutation of the observations (for $\widehat{M}$ only depends on the histrogram or even the profile), and the bounded difference condition of Lemma \ref{lemma:mcdiarmid} is satisfied with $c = O(n^{c_2-1}\log n)$. Consequently, the McDiarmid's inequality under Poisson sampling (cf. Lemma \ref{lemma:mcdiarmid}) leads to
\begin{align}\label{eq:concentration_final}
\bP\left(L(\mu,\widehat{M}) \ge \bE[L(\mu, \widehat{M})] + \varepsilon \right) \le 4\exp\left(-\frac{cn\varepsilon^2}{(n^{c_2}\log n)^2}\right)
\end{align}
for some absolute constant $c>0$. 

Finally, a combination of \eqref{eq:Wasserstein_final}, \eqref{eq:expectation_final} and \eqref{eq:concentration_final} with a small enough $c_2>0$ completes the proof of Theorem \ref{thm:LMM} (since $k\cdot \text{W}_1(\widehat{\mu}, \mu_p)\le 2$ trivially holds, it may be assumed that $k=O(n\log n)$ in the statement of Theorem \ref{thm:LMM}).

\subsection{Implications on Poisson Approximation}\label{subsec.poisson_approx}
The estimator construction in Section \ref{subsec.estimator} also sheds light on other functional estimation problems and approximation theory. In the previous works on functional estimation, usually the parameter space is split into several regions where different estimation procedures are performed. However, since one does not have the perfect knowledge of the unknown parameter, sample splitting is typically employed to separate the tasks of region classification and statistical estimation. For example, under the discrete distribution models, the following type of estimator is common: 
\begin{align}\label{eq:functional_sample_splitting}
T(\widehat{p}) = T_1(\widehat{p}_1)\1(\widehat{p}_2\in R_1) + T_2(\widehat{p}_1)\1(\widehat{p}_2\in R_2),
\end{align}
where $\widehat{p}_1, \widehat{p}_2$ are the splitted observations of the empirical probability $\widehat{p}$, and $T_1, T_2$ are the estimators in the regions $R_1$ and $R_2$, respectively. However, the estimator \eqref{eq:functional_sample_splitting} may be unstable to small changes of input and incur a large variance without proper truncation. Our estimator construction in Section \ref{subsec.estimator} provides a simple fix to this problem, where we may reduce the variance by using the following modified estimator
\begin{align}
T'(\widehat{p}) &= \bE\left[T_1(\widehat{p}_1)\1(\widehat{p}_2\in R_1) + T_2(\widehat{p}_1)\1(\widehat{p}_2\in R_2) \mid \widehat{p} \right] \nonumber\\
&=\sum_{j\in R_1}\bP\left(\mathsf{B}\left(n\widehat{p}, \frac{1}{2}\right) = j \right)T_1\left(\frac{n\widehat{p}-j}{n/2}\right) + \sum_{j\in R_2}\bP\left(\mathsf{B}\left(n\widehat{p}, \frac{1}{2}\right) = j \right)T_2\left(\frac{n\widehat{p}-j}{n/2}\right). \label{eq:functional_no_sample_split}
\end{align}
Note that the deviation probability results (cf. Lemma \ref{lemma:bounded_diff}) highly rely on the modified form \eqref{eq:functional_no_sample_split}. 

A striking result is that \eqref{eq:functional_no_sample_split} also helps in approximation theory, a seemingly unrelated field to statistical estimation. A crux of the analysis of the population maximum likelihood estimator in \cite{vinayak2019maximum,vinayak2019optimal} is the following approximation-theoretic result: for any $1$-Lipschitz function $f$ on $[0,1]$ with $f(0) = 0$, there exists a degree-$n$ Bernstein polynomial $$p_n(x) = \sum_{k=0}^n a_{n,k} \cdot \binom{n}{k}x^k(1-x)^{n-k}$$
with a good approximation property $\|f - p_n\|_\infty = O(1/\sqrt{n\log n})$ as well as a small magnitude of coefficients $\max_k |a_{n,k}| = O(n^{1.5 + \varepsilon})$ for some small $\varepsilon>0$. In their applications, small coefficients are desirable to obtain a small variance for the maximum likelihood estimator, and their proof of the above fact relies on some complicated degree raising process of Bernstein polynomials and their relationships to the Chebyshev polynomials. In contrast, we will explicitly relate the above problem to the statistical estimation, and employ the idea of \eqref{eq:functional_no_sample_split} to improve the above coefficient bound from $O(n^{1.5 + \varepsilon})$ to $O(1)$. 

The next theorem presents a better coefficient bound for the Poisson approximation polynomials, which are slight variants of the Bernstein polynomials. Adapting the proof slightly, the same result will also hold for Bernstein polynomials. 

\begin{theorem}\label{thm.poisson_approx}
	Let $f$ be any $1$-Lipschitz function on $\bR$ with $f(0)=0$. Then for any $\varepsilon, \delta>0$, there exists a constant $C>0$ depending only on $(\varepsilon,\delta)$ such that for any $n\in \naturals$, there is a sequence of coefficients $(b_j)_{j=0}^{\infty}$ with
	\begin{align}\label{eq.approx_error}
	\left|f(x) - \sum_{j=0}^{\infty} b_j\bP(\Poi(nx) = j) \right| \le C\sqrt{\frac{x}{n\log n}}, \quad \forall x\in [0,1],
	\end{align}
	where $b_j = 0$ for $j> (1+\delta)n$, and
	\begin{align}\label{eq.coeff_bound}
	\left|b_j - f\left(\frac{j}{n}\right) \right| \le \frac{C(1+j^{1/2})}{n^{1-\varepsilon}}, \quad \forall j\le (1+\delta)n.
	\end{align}
\end{theorem}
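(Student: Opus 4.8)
The plan is to realize the localization-and-smoothing scheme of Section~\ref{subsec.estimator} as a purely analytic construction, with the target statement \eqref{eq.approx_error} playing the role of a bias bound and \eqref{eq.coeff_bound} the role of a magnitude bound. Fix $\delta>0$, let $c_1>0$ be a large absolute constant, set $D\triangleq c_2\log n$ with $c_2=c_2(\varepsilon)>0$ a small constant, and let $\{I_m\},\{\widetilde I_m\},\{x_m\},\widetilde\ell_m$ be the local intervals of \eqref{eq:local_interval}--\eqref{eq:local_interval_enlarged}, now chosen to cover $(0,1+\delta]$. On each enlarged interval $\widetilde I_m$ invoke the Jackson-type theorem (Lemma~\ref{lemma:jackson}) to obtain a degree-$D$ polynomial $P_m(x)=\sum_{d=0}^{D}a_{m,d}(x-x_m)^d$ satisfying \eqref{eq.approx_pointwise}--\eqref{eq.approx_norm} and normalized so that $a_{m,0}=f(x_m)$; Lemma~\ref{lemma:coefficients} then yields $|a_{m,d}|\le n^{O(c_2)}\widetilde\ell_m^{\,1-d}$ for $1\le d\le D$, exactly as in \eqref{eq:coefficient_bound}.

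To glue the $P_m$ into a single Poisson mixture I would split $\Poi(nx)=h_1+h_2$ into two independent $\Poi(nx/2)$ variables and introduce, besides the smoothed windows $w_m(x)\triangleq\bP(h_1\in nI_m/2)$, a ``null window'' $w_0(x)\triangleq\bP(h_1=0)$ carrying the value $f(0)=0$. Recalling from \eqref{eq:g_function}--\eqref{eq:tilde_g_function} the cutoff functions $\widetilde g_{d,x_m}$ with $\bE[g_{d,x_m}(h_2/(n/2))]=(x-x_m)^d$, set
\begin{align*}
\widehat f(x)\triangleq\sum_{m\ge1} w_m(x)\,\bE_{h_2}\!\Big[\textstyle\sum_{d=0}^{D} a_{m,d}\,\widetilde g_{d,x_m}\!\big(\tfrac{h_2}{n/2}\big)\Big].
\end{align*}
Conditioning on $h_1+h_2=j$, under which $h_1\sim\mathsf{B}(j,1/2)$, rewrites this as $\widehat f(x)=\sum_{j\ge0}b_j\,\bP(\Poi(nx)=j)$ with
\begin{align*}
b_j=\sum_{m\ge1}\ \sum_{s\in nI_m/2}\bP\!\big(\mathsf{B}(j,\tfrac12)=s\big)\sum_{d=0}^{D} a_{m,d}\,\widetilde g_{d,x_m}\!\big(\tfrac{j-s}{n/2}\big),
\end{align*}
which is precisely the smoothed-moment form of \eqref{eq:est_smoothed_moments}. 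Finally I set $b_j\triangleq0$ for $j>(1+\delta)n$; since each surviving summand forces $\mathsf{B}(j,1/2)\le n(1+\delta)/2$, the discarded $b_j$ are at most $n^{O(1)}\bP(\mathsf{B}(j,1/2)\le n(1+\delta)/2)$, exponentially small once $j\gg n$, so truncation perturbs $\widehat f(x)$ by at most $n^{O(1)}\bP(\Poi(nx)>(1+\delta)n)\le n^{O(1)}e^{-\Omega(\delta^2 n)}$ for $x\in[0,1]$.

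For \eqref{eq.approx_error} I would write, with $\widetilde P_m(x)\triangleq\bE_{h_2}[\sum_{d} a_{m,d}\widetilde g_{d,x_m}(h_2/(n/2))]$,
\begin{align*}
f(x)-\widehat f(x) &=\sum_{m\ge1}w_m(x)\big(f(x)-P_m(x)\big)+\sum_{m\ge1}w_m(x)\big(P_m(x)-\widetilde P_m(x)\big) \\
&\quad +\Big(\textstyle\sum_{m\ge0}w_m(x)-1\Big)f(x).
\end{align*}
The last term is negligible for $x\in[0,1]$, since $\sum_{m\ge0}w_m(x)=1-\bP(\Poi(nx/2)>n(1+\delta)/2)$ with the null window absorbing the atom at $0$. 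In the first two sums only the $O(1)$ ``active'' indices with $x\in\widetilde I_m$ contribute a main term, bounded by $O(\sqrt{x/(n\log n)})$ via the pointwise bound \eqref{eq.approx_pointwise} for $m=1$ and the norm bound \eqref{eq.approx_norm} together with $\widetilde\ell_m\lesssim\sqrt{x\log n/n}$ for $m\ge2$ (the cutoff discrepancy $P_m-\widetilde P_m$ on $\widetilde I_m$ costs only a Poisson deviation of $\Omega(\sqrt{c_1\log n})$ standard deviations, since $[x_{m,\text{L}},x_{m,\text{R}}]\supsetneq\widetilde I_m$); for the ``inactive'' indices one combines the localization lemma (Lemma~\ref{lemma.localization}) — which makes $w_m(x)$ super-polynomially small off $\widetilde I_m$ — with the Poisson lower-tail estimate, which decays fast enough to beat the at-most-$(x/\widetilde\ell_m)^{O(D)}$ growth of $\widetilde P_m(x)$ provided $c_1$ is large compared with $c_2$.

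It remains to prove \eqref{eq.coeff_bound} for $j\le(1+\delta)n$, which I expect to be the crux. Since $\mathsf{B}(j,1/2)$ concentrates within $O(\sqrt{j\log n})$ of $j/2$ up to polynomially small probability, only the $O(1)$ windows $m$ with $x_m\approx j/n$ contribute non-negligibly to $b_j$, and for each such $m$ the exact falling-factorial moments of the binomial, $\bE[(j-s)(j-s-1)\cdots(j-s-d'+1)]=j(j-1)\cdots(j-d'+1)\,2^{-d'}$, give $\bE_s[\widetilde g_{d,x_m}((j-s)/(n/2))]=(j/n-x_m)^d$ up to a relative error $O(D^2/j)$ (the gap between falling factorials and ordinary monomials) and a negligible cutoff term. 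Multiplying by $|a_{m,d}|\le n^{O(c_2)}\widetilde\ell_m^{\,1-d}$, using $|j/n-x_m|\lesssim\widetilde\ell_m$ and $\widetilde\ell_m\gtrsim\log n/n$, and summing over $1\le d\le D$ recombines $\sum_{d}a_{m,d}(j/n-x_m)^d=P_m(j/n)$, so $|b_j-f(j/n)|\le|P_m(j/n)-f(j/n)|+n^{O(c_2)}(1+j^{1/2})/n\le n^{O(c_2)}(1+j^{1/2})/n$ by \eqref{eq:approx_error} (the regime $j\lesssim\log^2 n$, where the falling-factorial approximation is lossy, is handled separately using that both $b_j$ and $f(j/n)$ are then $O(n^{-1}\log^2 n)$); choosing $c_2$ so small that $n^{O(c_2)}\le n^{\varepsilon}$ gives \eqref{eq.coeff_bound}. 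The delicate point, shared with the inactive-index bound above, is that for the small-index intervals a degree-$\Theta(\log n)$ polynomial must be fitted on an interval of length $\Theta(\log n/n)$, so the $a_{m,d}$ — and hence the individual summands in $b_j$ and in $\widehat f$ — are super-polynomially large; the proof must exhibit the cancellation by which the binomial smoothing collapses them into the bounded quantities $P_m(j/n)$ and $f$, and must verify that wherever this cancellation is absent the window weights $w_m$ (via Lemma~\ref{lemma.localization} and the choice of $c_1$) are small enough to compensate.
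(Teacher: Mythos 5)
Your proposal is correct and follows essentially the same strategy as the paper's: Jackson-type polynomial approximation of degree $D=\Theta(\log n)$ on the same dyadic local intervals, conversion to Poisson/Charlier polynomials, gluing via the binomial decomposition $\Poi(nx)=h_1+h_2$ with the window weights $w_m(x)=\bP(h_1\in nI_m/2)$, and a truncation to kill coefficients at $j>(1+\delta)n$. The one genuine structural difference is how locality of the coefficients is enforced and the coefficient bound \eqref{eq.coeff_bound} is verified. The paper first proves a \emph{local} version of the theorem (Lemma~\ref{lemma.local_approximation}): on each $I_m'$ it converts $P_m$ to Poisson coefficients $b_j^{(m),*}$ and then hard-truncates them, $b_j^{(m)}=b_j^{(m),*}\1(j\in nI_m'')$, so that the local coefficient estimate $|b_j^{(m)}-f(j/n)|\lesssim (1+j^{1/2})/n^{1-\varepsilon}$ is available before any gluing; the global coefficient bound then follows from a short combinatorial argument that only $O(1)$ indices $m$ near $j/n$ can contribute to the mixture $b_j=2^{-j}\sum_m\sum_{k\in nI_m/2}\binom{j}{k}b_{j-k}^{(m)}$. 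You instead use the soft cutoff $\widetilde g_{d,x_m}$ from the estimator construction and verify \eqref{eq.coeff_bound} directly by computing binomial falling-factorial moments so that $\bE_s[g_{d,x_m}((j-s)/(n/2))]$ collapses to $(j/n-x_m)^d$ up to $O(D^2/j)$ relative error, then recombining to $P_m(j/n)$ and invoking the Jackson bound; the small-$j$ regime is handled separately. Both routes close, but the paper's factoring-through-a-local-lemma keeps the falling-factorial identities localized and avoids the low-$j$ case split, while your direct computation is a little heavier but stays closer to the statistical estimator in Section~\ref{subsec.estimator} and makes the cancellation structure explicit. One small point worth being careful about: with the soft cutoff, the raw $b_j$ are never exactly zero at large $j$, so the truncation $b_j:=0$ for $j>(1+\delta)n$ must be imposed explicitly and its effect on \eqref{eq.approx_error} bounded through $\bP(\Poi(nx)>(1+\delta)n)$ as you do; the paper's hard truncation in the local lemma gives this for free.
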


The following corollary is immediate by choosing any $\varepsilon \in (0,1/2)$. 
\begin{corollary}
For any $\delta>0$, there exists an absolute constant $C$ depending only on $\delta$ such that for any $1$-Lipschitz function $f$ on $\bR$ with $f(0)=0$, there exists a sequence of coefficients $(b_j)_{j=0}^{\infty}$ such that $b_j = 0$ for $j>(1+\delta)n$, $\max_j |b_j|\le C$ and $\max_{x\in [0,1]}|f(x)-\sum_{j=0}^\infty b_j\bP(\mathsf{Poi}(nx)=j)|\le C/\sqrt{n\log n}$.  
\end{corollary}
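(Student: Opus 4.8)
The plan is to transport the estimator construction of Section~\ref{subsec.estimator} into this noiseless, purely analytic setting: approximate $f$ by a low-degree polynomial on each local interval, turn each local polynomial into its Poissonized form via the functions $g_{d,x_m}$ of \eqref{eq:g_function}, truncate it as in \eqref{eq:tilde_g_function}, and splice the pieces together with the smoothing device \eqref{eq:functional_no_sample_split} rather than a hard partition. Concretely, reuse the intervals $I_m\subseteq\widetilde I_m\subseteq[x_{m,\mathrm L},x_{m,\mathrm R}]$, the centers $x_m$ (with $x_1=0$ and $0\in I_1$), the degree $D=c_2\log n$ and the count $M=n/(c_1\log n)$ of Section~\ref{subsec.estimator}, with $c_1$ a large absolute constant and $c_2=c_2(\varepsilon,\delta)$ a small constant to be fixed at the end. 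For each $m$ let $q_m(y)=\sum_{d=0}^{D}a_{m,d}(y-x_m)^d$ be a degree-$D$ polynomial approximating $f$ on $\widetilde I_m$ up to Jackson's rate (Lemma~\ref{lemma:jackson}), normalized by $a_{m,0}=f(x_m)$, let $G_m(y)=\sum_{d=0}^{D}a_{m,d}\,g_{d,x_m}(y)$ be its Poissonized version, and let $\widetilde G_m$ be the truncation of $G_m$ to $[x_{m,\mathrm L},x_{m,\mathrm R}]$ as in \eqref{eq:tilde_g_function}. Then set
\[
b_j=\sum_{m=1}^{M}\ \sum_{s\in nI_m/2}\bP\!\left(\mathsf{B}(j,\tfrac12)=s\right)\widetilde G_m\!\left(\frac{j-s}{n/2}\right)\ \ (j\le(1+\delta)n),\qquad b_j=0\ \ (j>(1+\delta)n);
\]
this is exactly the value at $\widehat p=j/n$ of the smoothed statistic \eqref{eq:functional_no_sample_split} whose local pieces are the $\widetilde G_m$.

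For the approximation bound \eqref{eq.approx_error} I would first note that dropping the tail $j>(1+\delta)n$ costs at most $(\max_j|b_j|)\,\bP(\Poi(nx)>(1+\delta)n)\le e^{-c\delta^2 n}$ for $x\le 1$, which is negligible. For the rest, Poisson thinning (split $h\sim\Poi(nx)$ as $h_1+h_2$ with $h_1\sim\mathsf{B}(h,\tfrac12)$, so $h_1,h_2\iiddistr\Poi(nx/2)$) gives
\[
\sum_{j}b_j\,\bP(\Poi(nx)=j)=\sum_{m=1}^{M}\bP\!\left(\Poi(nx/2)\in nI_m/2\right)\,\bE\!\left[\widetilde G_m\!\left(\tfrac{h_2}{n/2}\right)\right],\qquad h_2\sim\Poi(nx/2),
\]
a weighted average over $m$ (weights summing to $1$) of the $\bE[\widetilde G_m(h_2/(n/2))]$. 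By construction of $g_{d,x_m}$ one has the exact identity $\bE[G_m(h_2/(n/2))]=q_m(x)$; for the $O(1)$ indices $m$ with $x\in\widetilde I_m$ the truncation difference $\bE[(\widetilde G_m-G_m)(h_2/(n/2))]$ is $n^{-10}$-small by a Chernoff bound for $\Poi(nx/2)$ to leave the enlarged interval (the localization Lemma~\ref{lemma.localization}), and $|q_m(x)-f(x)|\lesssim\sqrt{x/(n\log n)}$ by Lemma~\ref{lemma:jackson}; for $m$ with $x\notin\widetilde I_m$ the weight $\bP(\Poi(nx/2)\in nI_m/2)$ is itself super-polynomially small and absorbs $|\bE\widetilde G_m|$. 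Summing over $m$ yields \eqref{eq.approx_error}, the first interval being handled as in the footnote of Section~\ref{subsec.analysis} (there $x_1=0$ and $g_{d,0}(0)=\1(d=0)$, so $b_0=f(0)=0$).

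For the coefficient bound \eqref{eq.coeff_bound}, write $h_1\sim\mathsf{B}(j,\tfrac12)$, $h_2=j-h_1$, so that $b_j=\bE[\widetilde G_{m(h_1)}(h_2/(n/2))]$ with $m(h_1)$ the index containing $h_1$ in $nI_{m(h_1)}/2$. Both $h_1/(n/2)$ and $h_2/(n/2)$ concentrate around $j/n$ with fluctuation $O(\sqrt{(j/n)/n})$, a $\sqrt{\log n}$ factor below the relevant interval length $\asymp\sqrt{(j/n)\log n/n}$; hence a binomial Chernoff bound confines $(m(h_1),h_2/(n/2))$ to indices $m$ with $|x_m-j/n|=O(\sqrt{(j/n)\log n/n})$ and arguments inside $[x_{m,\mathrm L},x_{m,\mathrm R}]$, up to an event of super-polynomially small probability on which $|\widetilde G_m|$ is only polynomially large. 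On the good event $\widetilde G_m=G_m$, and combining $|G_m(y)-q_m(y)|$ with $|q_m(y)-f(y)|$ and $|f(y)-f(j/n)|$ gives $|b_j-f(j/n)|\lesssim n^{O(c_2)}\sqrt{(j/n)\log n/n}+n^{-10}\le C(\varepsilon,\delta)(1+\sqrt{j})\,n^{\varepsilon-1}$ once $c_2$ is small enough in terms of $\varepsilon$ (so $n^{O(c_2)}\sqrt{\log n}\le n^{\varepsilon}$). Fixing $c_2$ and then $c_1$ completes the proof; the displayed corollary follows by taking any $\varepsilon\in(0,1/2)$.

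The main obstacle is the quantitative control of the Poissonized polynomials $G_m$: because a degree-$D$ polynomial approximating $f$ on the short interval $\widetilde I_m$ (which, for large $m$, is far from the origin relative to its length) necessarily has monomial coefficients of size $n^{\Omega(c_2\log n)}$, the naive triangle-inequality bounds on $|G_m|$ and on $|G_m-q_m|$ are super-polynomially large and useless. One must instead exploit that $G_m$ is obtained from $q_m$ by the coherent substitution $y^{d'}\mapsto(2/n)^{d'}(ny/2)_{d'}$ (falling factorials), whose corrections resum --- via the Stirling-number structure and the fact that $q_m$ inherits the Lipschitz bound of $f$ --- into quantities controlled by low-order derivatives of $q_m$, so that $G_m\approx q_m$ on $[x_{m,\mathrm L},x_{m,\mathrm R}]$ and $\widetilde G_m$ stays polynomially bounded everywhere; and every remaining super-polynomial factor only ever gets multiplied by a Poisson or binomial probability of a deviation of $\gtrsim\sqrt{c_1}$ standard deviations, which a large choice of $c_1$ makes small enough to kill it. Making this trade-off precise --- and tracking the logarithmic factors so that the corollary's $O(1/\sqrt{n\log n})$ and $O(1)$ bounds come out correctly --- is the technical heart of the argument, being the noiseless analogue of Lemmas~\ref{lemma.localization}, \ref{lemma:local_moments_expectation} and \ref{lemma:coefficients}; the thinning identity, the unbiasedness of the $g_{d,x_m}$, and Jackson's theorem are by comparison routine.
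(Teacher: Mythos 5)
Your construction --- local Jackson polynomials on the $\widetilde I_m$, their Poissonization via the $g_{d,x_m}$ of \eqref{eq:g_function} and the truncation \eqref{eq:tilde_g_function}, and the binomial-convolution splicing in the spirit of \eqref{eq:functional_no_sample_split} --- reproduces the paper's proof of Theorem~\ref{thm.poisson_approx} (Lemma~\ref{lemma.local_approximation} together with the formula \eqref{eq.b_j}), from which the corollary is then immediate by choosing any $\varepsilon\in(0,1/2)$. The only substantive difference is that the ``technical heart'' you outline via a Stirling-number resummation is handled in the paper by directly invoking Lemma~\ref{lemma:charlier}, which bounds $|g_{d,x_m}(z)|$ by $(2\Delta)^d$ on a suitable neighborhood and, combined with the Chebyshev/Jackson coefficient bound $|a_{m,d}|\lesssim\widetilde\ell_m^{1-d}$ from Lemmas~\ref{lemma:jackson}--\ref{lemma:coefficients}, yields the needed $n^{O(c_2)}$-control of each $G_m$ term by term, with the super-polynomial growth further out killed by taking $c_1$ large relative to $c_2$ exactly as you describe.
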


Note that a natural choice for the coefficient is $b_j = f(j/n)$, where one uses $\bE[f(X/n)]$ with $X\sim \Poi(nx)$ to approximate the 1-Lipschitz function $f(x)$. For this choice, we have the pointwise bound
$$
| \bE[f(X/n)] - f(x)| \le \bE |f(X/n) - f(x)| \le \bE|X/n - x| \le \sqrt{\bE(X/n-x)^2} = \sqrt{\frac{x}{n}}. 
$$
Hence, Theorem \ref{thm.poisson_approx} shows that we may improve the above pointwise bound uniformly by a logarithmic factor, while do not change the coefficients $b_j$ by too much. 

The recipe of the proof of Theorem \ref{thm.poisson_approx} is as follows. First, we partition the entire interval $[0,1]$ into local intervals as before, and for each local interval $I$, we construct local Poisson polynomials with the desired approximation property and coefficients essentially supported on $I$. Then we employ the idea of \eqref{eq:functional_no_sample_split} to piece together the previous local polynomials into a global Poisson polynomial with the desired properties. The detailed proof is deferred to the appendix. 
\section{Acknowledgement}
The authors are grateful to professors Jayadev Acharya, Moses Charikar, Aaron Sidford, and Tsachy Weissman for their tremendous support on this project, and their very helpful comments on the paper. We would also like to thank Jayadev Acharya once more for initializing this problem and pointing out the useful reference \cite{acharya2012tight}. Yanjun Han would like to thank Wei-Ning Chen for igniting discussions on Theorem \ref{thm.poisson_approx}. 
\bibliographystyle{alpha}
\bibliography{di}
\appendix
%!TEX root = main.tex

\section{Auxiliary Lemmas}
\begin{lemma}[Local intervals, Lemma 17 of \cite{han2018local}]\label{lemma.localization}
Let $I_m$ and $\widetilde{I}_m$ be defined in \eqref{eq:local_interval} and \eqref{eq:local_interval_enlarged}, respectively, with $m\in [M]$. Then for $c_1>0$ large enough, the following inequality holds for each $p\in [0,1]$, and $h\sim \mathsf{Poi}(np)$ or $h\sim \mathsf{B}(n,p)$: 
\begin{align*}
\bP(h\notin n\widetilde{I}_m \mid p\in I_m) &\le n^{-5}, \\
\bP(h\in nI_m \mid p\notin \widetilde{I}_m) & \le n^{-5}. 
\end{align*}
\end{lemma}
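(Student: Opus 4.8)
The plan is to reduce the lemma to the standard multiplicative Chernoff bounds for $\mathsf{Poi}(np)$ and $\mathsf{B}(n,p)$, namely $\bP(h\ge (1+\epsilon)np)\le \exp(-np[(1+\epsilon)\ln(1+\epsilon)-\epsilon])$ and $\bP(h\le (1-\epsilon)np)\le \exp(-np\,\epsilon^2/2)$, both of which hold verbatim for a Poisson variable and for a sum of independent Bernoullis of total mean $np$. The conceptual point is that the endpoints of $n\widetilde I_m$, namely $c_1(m-\tfrac54)_+^2\log n$ and $c_1(m+\tfrac14)^2\log n$, and the endpoints of $nI_m$, namely $c_1(m-1)^2\log n$ and $c_1m^2\log n$, form a chain of breakpoints whose consecutive gaps are all of order $c_1 m\log n$, while $\mathbb{E}h=np$ is of order $c_1m^2\log n$ on the relevant ranges; hence the ratio of any relevant threshold to $np$ is of the form $1\pm\Theta(1/m)$, and each Chernoff exponent will come out to be at least (an absolute constant) times $c_1\log n$. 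Taking $c_1$ a large enough absolute constant (any $c_1\ge 100$ will work) then makes each probability at most $\exp(-5\log n)=n^{-5}$.

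Concretely I would handle the four deviations in turn. For the first inequality, take $p\in I_m$, so $c_1(m-1)^2\log n< np\le c_1m^2\log n$. For the right part $\bP\big(h>c_1(m+\tfrac14)^2\log n\big)$, write the threshold as $(1+\epsilon)np$ with $1+\epsilon\ge(1+\tfrac1{4m})^2$, and use that the Chernoff rate $\mu\mapsto \mu\,\phi(t/\mu)$, where $\phi(u)=u\ln u-u+1$ and $t=c_1(m+\tfrac14)^2\log n$ is fixed, has derivative $1-t/\mu\le 0$ on $(0,t)$, hence is minimized over $np\in(0,c_1m^2\log n]$ at $np=c_1m^2\log n$, giving the bound $c_1m^2\log n\cdot\phi\big((1+\tfrac1{4m})^2\big)\ge \tfrac{2}{25}c_1\log n$ via the elementary inequality $\phi(1+x)\ge x^2/(2(1+x))$. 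For the left part $\bP\big(h<c_1(m-\tfrac54)_+^2\log n\big)$ with $m\ge 2$, I similarly note $(\mu-t)^2/(2\mu)$ is increasing for $\mu\ge t$, reduce to the worst mean $np=c_1(m-1)^2\log n$, and bound the rate below by $\tfrac{49}{512}c_1\log n$; the case $m=1$ is vacuous since $(m-\tfrac54)_+=0$ makes the event $\{h<0\}$. The second inequality is the mirror image: if $np>c_1(m+\tfrac14)^2\log n$ then $\bP\big(h\le c_1m^2\log n\big)$ is a lower-tail event whose rate $(\mu-t)^2/(2\mu)$ is increasing in $np$ and hence minimized at the endpoint $np=c_1(m+\tfrac14)^2\log n$, giving $\ge\tfrac1{10}c_1\log n$; and if $np<c_1(m-\tfrac54)^2\log n$ (possible only for $m\ge 2$) then $\bP\big(h>c_1(m-1)^2\log n\big)$ is an upper-tail event whose rate $\mu\,\phi(t/\mu)$ is again decreasing in $np$, minimized at $np=c_1(m-\tfrac54)^2\log n$, and at least $\tfrac{9}{128}c_1\log n$. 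In every case the exponent exceeds $5\log n$ for $c_1$ large.

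The routine parts are verifying the elementary bound $\phi(1+x)\ge x^2/(2(1+x))$ for $x\ge 0$, checking the two monotonicity identities $\tfrac{d}{d\mu}[\mu\,\phi(t/\mu)]=1-t/\mu$ and $\tfrac{d}{d\mu}[(\mu-t)^2/(2\mu)]=(\mu^2-t^2)/(2\mu^2)$, and chasing the constants in the $\Theta(1/m)$ gap estimates (e.g. $c_1(m-1)^2\log n - c_1(m-\tfrac54)^2\log n=c_1\tfrac{8m-9}{16}\log n$). I do not expect a genuine obstacle here; the only points needing a bit of care are (i) the degenerate behaviour at $m=1$, where $\widetilde I_1$ reaches $0$ so two of the four events are empty, and more generally making the constants work uniformly for small $m$, and (ii) ensuring the worst value of $np$ within each allowed range is taken at the correct endpoint, which is precisely what the two monotonicity computations certify. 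The Binomial case needs no separate treatment, since it uses exactly the same two Chernoff inequalities. (Alternatively one may simply cite Lemma 17 of \cite{han2018local}, but the above is short and self-contained.)
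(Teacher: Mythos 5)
Your proof is correct and self-contained. The paper itself does not prove this lemma but simply cites Lemma 17 of \cite{han2018local}; your derivation reconstructs the standard argument one would expect in that reference. The structure is sound: you reduce the four deviations (two per inequality) to the multiplicative Chernoff bounds of Lemma~\ref{lemma:poissontail}, and you correctly identify that the rate functions $\mu \mapsto \mu\,\phi(t/\mu)$ and $\mu\mapsto(\mu-t)^2/(2\mu)$ are monotone in $\mu$ (decreasing below $t$ and increasing above $t$, respectively), so the worst-case mean always sits at the extreme admissible endpoint of $nI_m$ or $n\tilde I_m$. The elementary bound $\phi(1+x)\ge x^2/(2(1+x))$ then delivers a rate of at least (constant)$\cdot c_1\log n$ in every case, with the binding constant arising at $m=1$ or $m=2$. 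I verified the constants: the upper-tail deviation for $p\in I_m$ gives $\ge\frac{2}{25}c_1\log n$, the lower-tail deviation (for $m\ge 2$) gives $\ge\frac{49}{512}c_1\log n$, the sub-case $np>c_1(m+\frac14)^2\log n$ gives $\ge\frac{81}{800}c_1\log n$ at $m=1$, and the sub-case $np<c_1(m-\frac54)^2\log n$ also gives $\ge\frac{49}{512}c_1\log n$; your quoted $\frac{9}{128}$ and $\frac{1}{10}$ are slightly looser but still valid lower bounds, so $c_1\ge 100$ indeed suffices. You also correctly handle the degenerate $m=1$ boundary (where $(m-\frac54)_+=0$), and you correctly observe that the Binomial case uses the exact same two inequalities. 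No gaps.
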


\begin{lemma}[Dual representation of Wasserstein distance \cite{kantorovich1958space}]\label{lemma.dual}
For two Borel probability measures $P, Q$ on a separable metric space $(S, d)$, the following duality result holds:
\begin{align*}
\text{\rm W}_1(P,Q) = \sup_{f: \|f\|_{\text{\rm Lip}}\le 1} \bE_P[f(X)] - \bE_Q[f(X)], 
\end{align*}
where $X$ is a random variable taking value in $S$ with distribution $P$ or $Q$, and the Lipschitz norm is defined as $ \|f\|_{\text{\rm Lip}} = \sup_{x\neq y\in S} |f(x) - f(y)|/d(x,y)$. 
\end{lemma}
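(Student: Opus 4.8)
The plan is to establish the two inequalities $\text{W}_1(P,Q)\ge \sup_f\big(\bE_P[f(X)]-\bE_Q[f(X)]\big)$ and $\text{W}_1(P,Q)\le \sup_f(\cdots)$ separately, the only nontrivial content being a ``no duality gap'' statement together with a reduction of the dual potentials to $1$-Lipschitz functions. The first inequality is immediate: for any coupling $\pi$ of $(P,Q)$ and any $f$ with $\|f\|_{\Lip}\le 1$,
\begin{align*}
\bE_P[f(X)] - \bE_Q[f(X)] = \int \big(f(x)-f(y)\big)\,d\pi(x,y) \le \int d(x,y)\,d\pi(x,y),
\end{align*}
so taking the infimum over all couplings $\pi$ on the right and the supremum over $1$-Lipschitz $f$ on the left gives $\sup_{\|f\|_{\Lip}\le 1}\big(\bE_P[f(X)]-\bE_Q[f(X)]\big)\le \text{W}_1(P,Q)$.

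For the reverse inequality I would first invoke the general Kantorovich duality for the (lower semicontinuous, here continuous) cost $c(x,y)=d(x,y)$,
\begin{align*}
\text{W}_1(P,Q) = \sup\Big\{\int\phi\,dP+\int\psi\,dQ \;:\; \phi,\psi\ \text{bounded measurable},\ \phi(x)+\psi(y)\le d(x,y)\ \forall\, x,y\Big\},
\end{align*}
which one proves either by Fenchel--Rockafellar duality on $C(S)$ (clean when $S$ is compact) or by a minimax argument exploiting weak-$*$ compactness of the set of couplings (tightness of $P$ and $Q$). It then suffices to show that restricting the supremum on the right to pairs of the form $(\phi,\psi)=(f,-f)$ with $f$ $1$-Lipschitz does not decrease it, and this is the usual $c$-transform argument: given an admissible pair $(\phi,\psi)$, set $\psi^{*}(y)\triangleq\inf_{x}[d(x,y)-\phi(x)]$ and then $f(x)\triangleq\inf_{y}[d(x,y)-\psi^{*}(y)]$. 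Both are infima of families that are $1$-Lipschitz in the free variable, hence $1$-Lipschitz; one checks $\psi^{*}\ge\psi$ and $f\ge\phi$ pointwise, and---crucially using that $\psi^{*}$ is itself $1$-Lipschitz---that in fact $f=-\psi^{*}$. Therefore $\int\phi\,dP+\int\psi\,dQ\le\int f\,dP-\int f\,dQ=\bE_P[f(X)]-\bE_Q[f(X)]$, so the Lipschitz-restricted dual equals the full dual, which equals $\text{W}_1(P,Q)$; combining the two directions proves the lemma.

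The step I expect to be the real obstacle is the Kantorovich duality itself---the absence of a duality gap---which on a general separable metric space requires the tightness / weak-$*$ compactness input above and some care about integrability of unbounded $1$-Lipschitz potentials (precisely where the separability hypothesis on $(S,d)$ enters). In the setting of this paper all of this is painless: $S$ is a compact subinterval of $\bR$, all measures involved have finite first moments, and after the support quantization used in the linear program \eqref{eq:linear_program} both $\mu_p$ and $\widehat{\mu}$ are finitely supported, so the argument degenerates to ordinary finite-dimensional linear-programming strong duality followed by the (now finite) $c$-transform reduction. Hence for our purposes it is enough to cite the classical result of \cite{kantorovich1958space}, or to run this elementary finite version.
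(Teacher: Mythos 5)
The paper states Lemma~\ref{lemma.dual} as a classical citation to \cite{kantorovich1958space} and gives no proof, so there is no in-paper argument to compare against; what you have written is the standard Kantorovich--Rubinstein proof, and it is correct. The easy direction via the coupling identity $\int(f(x)-f(y))\,d\pi \le \int d(x,y)\,d\pi$ is fine. For the reverse direction, your $c$-transform step checks out: given an admissible pair $(\phi,\psi)$ with $\phi(x)+\psi(y)\le d(x,y)$, the function $\psi^*(y)=\inf_x[d(x,y)-\phi(x)]$ is $1$-Lipschitz as an infimum over $x$ of $1$-Lipschitz functions in $y$, and $\psi^*\ge\psi$ since $\psi(y)\le d(x,y)-\phi(x)$ for every $x$. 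Setting $f=(\psi^*)^c$, one has $f\le -\psi^*$ by taking $y=x$ in the infimum, and $f\ge -\psi^*$ precisely because $\psi^*$ is $1$-Lipschitz, so indeed $f=-\psi^*$ is $1$-Lipschitz; and $f(x)=\inf_y[d(x,y)-\psi^*(y)]\ge\phi(x)$ because $\psi^*(y)\le d(x,y)-\phi(x)$. Hence $\int\phi\,dP+\int\psi\,dQ\le\int f\,dP+\int\psi^*\,dQ=\bE_P[f]-\bE_Q[f]$, closing the gap. You are also right that the genuine content is the Kantorovich duality itself (tightness / Fenchel--Rockafellar, plus integrability care on unbounded spaces), and right that in the present paper these issues evaporate: the measures $\mu_p$ and $\widehat\mu$ are supported on a compact subinterval of $[0,1]$ (indeed finitely supported after the quantization in the LP), so the duality reduces to finite-dimensional LP strong duality and the $c$-transform becomes a finite minimization. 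Nothing to fix.
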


\begin{lemma}[Jackson's theorem \cite{devore1976degree}]\label{lemma:jackson}
Let $D>0$ be any integer, and $[a,b]\subseteq \mathbb{R}$ be any bounded interval. For any $1$-Lipschitz function $f$ on $[a,b]$, there exists a universal constant $C$ independent of $D,f$ such that there exists a polynomial $P(\cdot)$ of degree at most $D$ such that
\begin{align}\label{eq.approx_pointwise}
|f(x) - P(x)| \le \frac{C\sqrt{(b-a)(x-a)}}{D}, \qquad \forall x\in [a,b].
\end{align}
In particular, the following norm bound holds:
\begin{align}\label{eq.approx_norm}
\sup_{x\in [a,b]} |f(x)-P(x)| \le \frac{C(b-a)}{D}.
\end{align}
\end{lemma}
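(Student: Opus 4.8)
The plan is to reduce the pointwise estimate \eqref{eq.approx_pointwise} to a classical endpoint-refined (Timan-type) Jackson theorem by an affine change of variables, and then read off \eqref{eq.approx_norm} as an immediate corollary.

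\textbf{Step 1 (normalization).} Using the affine substitution $x = a + (b-a)t$ with $t\in[0,1]$, set $g(t):= f(a+(b-a)t)/(b-a)$; this $g$ is $1$-Lipschitz on $[0,1]$, and any polynomial $Q$ of degree at most $D$ satisfying $|g(t)-Q(t)|\le C\sqrt{t}/D$ on $[0,1]$ produces $P(x):=(b-a)\,Q\big((x-a)/(b-a)\big)$, which has degree at most $D$ in $x$ and obeys
\begin{align*}
|f(x)-P(x)| = (b-a)\,|g(t)-Q(t)| \le \frac{C(b-a)\sqrt{t}}{D} = \frac{C\sqrt{(b-a)(x-a)}}{D}.
\end{align*}
So it suffices to handle a single $1$-Lipschitz function $g$ on $[0,1]$ and produce $Q$ with $|g(t)-Q(t)|\le C\sqrt{t}/D$.

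\textbf{Step 2 (map to $[-1,1]$ and invoke the weighted Jackson estimate).} Put $u = 2t-1\in[-1,1]$ and $\phi(u):=g((1+u)/2)$, which is $\tfrac12$-Lipschitz, so its modulus of continuity satisfies $\omega_1(\phi;\delta)\le \delta/2$. Apply the pointwise Jackson theorem of \cite{devore1976degree}: for every integer $D\ge 1$ there is a polynomial $R_D$ of degree at most $D$ with
\begin{align*}
|\phi(u)-R_D(u)| \le C_0\,\omega_1\!\Big(\phi;\frac{\sqrt{1-u^2}}{D}\Big) \le \frac{C_0}{2}\cdot\frac{\sqrt{1-u^2}}{D},\qquad u\in[-1,1],
\end{align*}
for an absolute constant $C_0$. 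Since $1-u^2 = (1-u)(1+u) = 4t(1-t)\le 4t$ for $t\in[0,1]$, this gives $|g(t)-R_D(2t-1)|\le C_0\sqrt{t}/D$; taking $Q(t):=R_D(2t-1)$ (degree $\le D$) completes Step 1, hence \eqref{eq.approx_pointwise} with $C=C_0$. For \eqref{eq.approx_norm}, note $x-a\le b-a$ on $[a,b]$, so $\sqrt{(b-a)(x-a)}\le b-a$ and the uniform bound follows at once.

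\textbf{Main obstacle.} The only nontrivial ingredient is the weighted Jackson inequality with the factor $\sqrt{1-u^2}/D$, which forces the approximation error to vanish at $u=\pm1$ — precisely the feature that makes the right-hand side of \eqref{eq.approx_pointwise} tend to $0$ as $x\to a$, and which a plain Jackson bound $O((b-a)/D)$ does not provide. This is classical but not elementary; a self-contained argument would instead construct $R_D$ from a Jackson-type kernel after the substitution $u=\cos\theta$, the real work being the estimation of the relevant trigonometric moments near $\theta\in\{0,\pi\}$. For our purposes citing the standard result is cleaner, and everything else is a routine change of variables; one should just double-check downstream that the first local interval $\widetilde{I}_1$ has left endpoint $0$, so that the sharper decay near $a$ is actually exploited.
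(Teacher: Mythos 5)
The paper does not give a proof of Lemma~\ref{lemma:jackson}: it is stated in the Auxiliary Lemmas appendix with a bare citation to \cite{devore1976degree}, so there is no internal argument to compare against. Your reduction is correct: the affine scaling $x=a+(b-a)t$, $g(t)=f(a+(b-a)t)/(b-a)$ preserves the Lipschitz constant and the degree, and after the further map $u=2t-1$ the inequality $1-u^2=4t(1-t)\le 4t$ converts the weighted estimate on $[-1,1]$ into the claimed $\sqrt{(b-a)(x-a)}/D$ form; the sup-norm bound then follows from $x-a\le b-a$.

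One point worth being precise about, since your own remark at the end touches it: you need the \emph{Gopengauz/Teliakovskii} form of the pointwise Jackson estimate,
\[
|\phi(u)-R_D(u)|\le C_0\,\omega_1\!\left(\phi;\frac{\sqrt{1-u^2}}{D}\right),
\]
which in particular forces $R_D(\pm1)=\phi(\pm1)$. The more commonly quoted Timan theorem has $\frac{\sqrt{1-u^2}}{D}+\frac{1}{D^2}$ inside the modulus, and under your scaling that would only yield $|f(a)-P(a)|\lesssim (b-a)/D^2$ rather than exact interpolation at $x=a$. The paper's statement genuinely requires the interpolatory (Gopengauz) refinement, because it is used later for $\widetilde I_1$ (which has left endpoint $0$) to conclude $a_{1,0}=0$ from \eqref{eq.approx_pointwise}. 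You already implicitly invoke the sharper version; just make sure the reference you cite is to that form, as the two are often conflated. With that bibliographic caveat, the proposal is a correct and complete reduction.
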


\begin{lemma}[Coefficient bound, Lemma 28 of \cite{han2016minimax}]\label{lemma:coefficients}
	Let $p_n(x) = \sum_{\nu=0}^n a_\nu x^\nu$ be a polynomial of degree at most $n$ such that $|p_n(x)|\leq A$ for $x\in [a,b]$. Then
	\begin{enumerate}
		\item If $a+b\neq 0$, then
		\begin{align*}
		|a_\nu| \le 2^{7n/2}A\left|\frac{a+b}{2}\right|^{-\nu}\left(\left|\frac{b+a}{b-a}\right|^n +1 \right), \qquad \nu=0,\cdots,n.
		\end{align*}
		\item If $a+b = 0$, then
		\begin{align*}
		|a_\nu| \leq A b^{-\nu} (\sqrt{2}+1)^n, \qquad \nu=0,\cdots,n.
		\end{align*}
	\end{enumerate}
\end{lemma}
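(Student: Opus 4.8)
The plan is to recover each coefficient $a_\nu$ from a Cauchy integral of $p_n$ over a circle centered at the origin, and to control $|p_n|$ on that circle by a classical complex-analytic bound for polynomials bounded on a real interval (the Bernstein--Walsh inequality, equivalently the Chebyshev extremal property). Write $c=(a+b)/2$, $h=(b-a)/2$, $\phi(w)=w+\sqrt{w^2-1}$ with the branch chosen so that $|\phi|\ge 1$ off $[-1,1]$, and $\Phi_K(z)=\phi((z-c)/h)$. After the affine change of variable $t\mapsto c+ht$ sending $[-1,1]$ onto $[a,b]$, Bernstein--Walsh gives: if $|p_n|\le A$ on $[a,b]$ then
\[
|p_n(z)|\le A\,|\Phi_K(z)|^{\,n}\qquad\text{for all }z\in\mathbb{C},
\]
reading $|\Phi_K|\equiv 1$ on $[a,b]$ itself. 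Combined with the Cauchy estimate $|a_\nu|\le r^{-\nu}\max_{|z|=r}|p_n(z)|$, valid for every $r>0$ and every $\nu\in\{0,\dots,n\}$, the lemma reduces to choosing a radius $r$ adapted to the target bound and estimating $\max_{|z|=r}|\Phi_K(z)|$.

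For part (2), $a+b=0$ forces $[a,b]=[-b,b]$ with $b>0$, so $c=0$, $h=b$, and I take $r=b$. Then $w=(z-c)/h=z/b$ runs over the unit circle, so I need $\max_{|w|=1}|\Phi_K|$. Writing $\Phi_K=\rho e^{i\psi}$ and using $2w=\Phi_K+\Phi_K^{-1}$ gives $4|w|^2=\rho^2+\rho^{-2}+2\cos 2\psi$; on $|w|=1$ this yields $\rho^2+\rho^{-2}\le 6$, hence $\rho^2\le 3+2\sqrt2=(1+\sqrt2)^2$, i.e. $\max_{|w|=1}|\Phi_K|\le 1+\sqrt2$ (attained at $w=\pm i$). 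Therefore $|a_\nu|\le A\,b^{-\nu}(1+\sqrt2)^n$, exactly the stated bound.

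For part (1), $c\ne 0$, and I take $r=|c|$, which produces the factor $|c|^{-\nu}=|(a+b)/2|^{-\nu}$. As $z$ ranges over $|z|=|c|$, the point $w=(z-c)/h$ ranges over the circle of radius $|c|/h$ centered at $-c/h$, so $|w|\le R:=2|c|/h$ there. Since both branches of $\phi$ obey $|\phi(w)|\le|w|+\sqrt{|w|^2+1}$, we get $\max_{|z|=|c|}|\Phi_K(z)|\le R+\sqrt{R^2+1}\le 2R+1=4|c|/h+1$. If $|c|\ge h$ this is $\le 5|c|/h$, so $|a_\nu|\le A|c|^{-\nu}5^n(|c|/h)^n=A|c|^{-\nu}5^n|(a+b)/(b-a)|^n$; if $|c|<h$ it is $<5$, so $|a_\nu|\le A|c|^{-\nu}5^n$. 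In either case $|a_\nu|\le A|c|^{-\nu}5^n\bigl(|(a+b)/(b-a)|^n+1\bigr)\le A|c|^{-\nu}2^{7n/2}\bigl(|(a+b)/(b-a)|^n+1\bigr)$, since $5<2^{7/2}$; this is the claim, with room to spare in the constant.

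The only non-elementary input is Bernstein--Walsh; for a self-contained argument it can be replaced by the Chebyshev extremal property $|p_n(z)|\le|T_n(z)|$ for $\|p_n\|_{[-1,1]}\le1$ (proved by interpolating $p_n$ at the $n+1$ Chebyshev extrema plus a zero-counting argument) together with $|T_n(z)|=\tfrac12|\phi(z)^n+\phi(z)^{-n}|\le|\phi(z)|^n$. I expect the only real care needed is the elementary geometry of the auxiliary circles---checking that $w=(z-c)/h$ sweeps the asserted disk and that $\max_{|w|=1}|\Phi_K|=1+\sqrt2$---rather than anything deep. An alternative route through Lagrange interpolation at Chebyshev nodes also works but forces the messier estimate $\sum_{k\le n}\bigl|[\text{coefficient of }t^\nu\text{ in }T_k]\bigr|\le C^n$, so the Cauchy-integral route seems cleanest.
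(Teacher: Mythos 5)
Your argument is correct, and it is a genuinely different route from the one behind the cited source (the lemma is imported from \cite{han2016minimax}, where — as the constants $2^{7n/2}$ and $(\sqrt{2}+1)^n$ suggest — the proof expands $p_n$ in the Chebyshev basis adapted to $[a,b]$, bounds each Chebyshev coefficient by a multiple of $A$, and then sums explicit bounds on the monomial coefficients of the shifted Chebyshev polynomials). Your Cauchy-integral-plus-Bernstein--Walsh route replaces that bookkeeping with a single growth estimate $|p_n(z)|\le A\,|\Phi_K(z)|^n$ off the interval and a choice of contour radius ($r=|c|$, resp.\ $r=b$) matched to the factor $|(a+b)/2|^{-\nu}$ in the claim; the computations $\max_{|w|=1}|\phi(w)|=1+\sqrt2$ and $\max_{|z|=|c|}|\Phi_K|\le 2R+1$ with $R=2|c|/h$ are correct, and $5\le 2^{7/2}$ closes part (1) with room to spare. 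The trade-off is that your approach is cleaner and more conceptual but imports a complex-analytic black box, whereas the Chebyshev-expansion proof is entirely real and elementary. One caution on your proposed ``self-contained'' substitute: the pointwise extremal property $|p_n(z)|\le|T_n(z)|$ established by Lagrange interpolation at the Chebyshev extrema (with the alternating-sign argument) is a statement about \emph{real} $z$ outside $[-1,1]$, while your contours pass through genuinely complex points; the correct self-contained substitute is the maximum-principle proof of Bernstein--Walsh itself, applied to $p_n(z)\Phi_K(z)^{-n}$, which is analytic in $\overline{\mathbb{C}}\setminus[a,b]$ and has boundary modulus at most $A$.
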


\begin{lemma}[Poisson tail inequality, Theorem 5.4 of \cite{mitzenmacher2005probability}]\label{lemma:poissontail}
	For $X\sim \mathsf{Poi}(\lambda)$ or $X\sim \mathsf{B}(n, \lambda/n)$ and any $\delta>0$, we have
	\begin{align*}
	\mathbb{P}(X\ge (1+\delta)\lambda) &\le \left(\frac{e^\delta}{(1+\delta)^{1+\delta}}\right)^\lambda \le \exp\left(-\frac{(\delta^2\wedge \delta)\lambda}{3}\right),\\
	\mathbb{P}(X\le (1-\delta)\lambda) &\le \left(\frac{e^{-\delta}}{(1-\delta)^{1-\delta}}\right)^\lambda \le \exp\left(-\frac{\delta^2\lambda}{2}\right).
	\end{align*}
\end{lemma}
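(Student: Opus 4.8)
The plan is to prove both inequalities for the Poisson law $X\sim\Poi(\lambda)$ by the Chernoff method, and then reduce the Binomial case $X\sim\mathsf{B}(n,\lambda/n)$ to it via a one-line comparison of moment generating functions. Throughout I would use that for any $t>0$, Markov's inequality applied to $e^{tX}$ gives $\mathbb{P}(X\ge a)\le e^{-ta}\,\mathbb{E}[e^{tX}]$, and that for $t<0$, since $x\mapsto e^{tx}$ is decreasing we have $\{X\le a\}=\{e^{tX}\ge e^{ta}\}$ and hence $\mathbb{P}(X\le a)\le e^{-ta}\,\mathbb{E}[e^{tX}]$.

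\emph{Moment generating functions.} For $X\sim\Poi(\lambda)$, summing the probability mass function against $e^{tj}$ gives $\mathbb{E}[e^{tX}]=\exp(\lambda(e^t-1))$ for all real $t$. For $X\sim\mathsf{B}(n,p)$ with $p=\lambda/n$, $\mathbb{E}[e^{tX}]=(1-p+pe^t)^n=(1+p(e^t-1))^n\le\exp(np(e^t-1))=\exp(\lambda(e^t-1))$, where the inequality is $1+x\le e^x$ applied with $x=p(e^t-1)$ (valid for either sign of $x$, hence of $t$). Thus the Binomial MGF is pointwise dominated by the Poisson MGF, so any Chernoff bound derived below in the Poisson case transfers verbatim to the Binomial case.

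\emph{Optimizing the exponent.} For the upper tail take $a=(1+\delta)\lambda$: on $t>0$ the exponent $-ta+\lambda(e^t-1)$ is convex with minimizer $t=\ln(1+\delta)$, so $\mathbb{P}(X\ge(1+\delta)\lambda)\le\exp\!\big(\lambda(\delta-(1+\delta)\ln(1+\delta))\big)=\big(e^{\delta}/(1+\delta)^{1+\delta}\big)^{\lambda}$. For the lower tail with $0<\delta<1$, take $a=(1-\delta)\lambda$; the analogous minimization over $t<0$ occurs at $t=\ln(1-\delta)$ and gives $\mathbb{P}(X\le(1-\delta)\lambda)\le\big(e^{-\delta}/(1-\delta)^{1-\delta}\big)^{\lambda}$. (For $\delta\ge1$ the lower-tail bound is trivial since then $(1-\delta)\lambda\le0$.) This yields the first inequality in each displayed line of the lemma.

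\emph{Simplification by elementary calculus.} Writing $g(\delta)=(1+\delta)\ln(1+\delta)-\delta$, the upper-tail bound equals $\exp(-\lambda g(\delta))$, so it remains to show $g(\delta)\ge\frac13(\delta^2\wedge\delta)$ for all $\delta>0$. On $[0,1]$: the function $f(\delta)=g(\delta)-\delta^2/3$ has $f(0)=f'(0)=0$, and $f'(\delta)=\ln(1+\delta)-2\delta/3$ rises from $0$ to its maximum at $\delta=1/2$ and then falls to $f'(1)=\ln 2-2/3>0$, so $f'\ge0$ and hence $f\ge0$ there. On $[1,\infty)$: the function $h(\delta)=g(\delta)-\delta/3$ has $h'(\delta)=\ln(1+\delta)-1/3>0$ and $h(1)=2\ln 2-4/3>0$, so $h\ge0$. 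Likewise, $\psi(\delta)=(1-\delta)\ln(1-\delta)+\delta-\delta^2/2$ on $(0,1)$ has $\psi(0)=\psi'(0)=0$ and $\psi''(\delta)=\delta/(1-\delta)\ge0$, hence $\psi\ge0$, which is exactly $\big(e^{-\delta}/(1-\delta)^{1-\delta}\big)^{\lambda}\le\exp(-\delta^2\lambda/2)$. Since this is a classical result (Theorem 5.4 of \cite{mitzenmacher2005probability}), there is no genuine obstacle; the only mildly delicate point is the two-regime estimate $g(\delta)\ge\frac13(\delta^2\wedge\delta)$, whose threshold at $\delta=1$ and the universal constant $3$ force separate monotonicity arguments on $[0,1]$ and $[1,\infty)$ rather than a single convexity comparison.
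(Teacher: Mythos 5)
Your proof is correct. The paper does not prove this lemma at all---it is quoted verbatim from Theorem 5.4 of the cited textbook of Mitzenmacher and Upfal---so there is no in-paper argument to compare against; your self-contained derivation (Chernoff bound at $t=\ln(1\pm\delta)$, domination of the Binomial MGF by the Poisson MGF via $1+x\le e^x$, and the two-regime calculus estimate $(1+\delta)\ln(1+\delta)-\delta\ge\tfrac13(\delta^2\wedge\delta)$) is the standard one and all the monotonicity checks, including the sign of $f'(1)=\ln 2-2/3$ and $h(1)=2\ln 2-4/3$, are accurate.
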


\begin{lemma}[Bennett's inequality \cite{bennett1962probability}]\label{lemma:bennett}
	Let $X_1,\ldots,X_n\in [a,b]$ be independent random variables with 
	\begin{align*}
	\sigma^2 \triangleq \sum_{i=1}^n \var(X_i).
	\end{align*}
	Then we have
	\begin{align*}
	\bP\left(\left|\sum_{i=1}^n X_i - \sum_{i=1}^n \bE[X_i]\right|\ge \varepsilon\right)\le 2\exp\left(-\frac{\varepsilon^2}{2(\sigma^2+(b-a)\varepsilon/3)}\right).
	\end{align*}
\end{lemma}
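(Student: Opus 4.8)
The plan is to follow the classical Chernoff/Cram\'er approach via moment generating functions. First I would reduce to the centered case: set $Y_i \triangleq X_i - \bE[X_i]$, so that the $Y_i$ are independent, mean-zero, and satisfy $Y_i \le b - \bE[X_i] \le b-a =: c$ as well as $-Y_i \le \bE[X_i] - a \le c$, while $\var(Y_i) = \var(X_i)$. By symmetry (applying the one-sided tail bound to $\{Y_i\}$ and to $\{-Y_i\}$ and taking a union bound), it suffices to establish the one-sided estimate $\bP\big(\sum_{i=1}^n Y_i \ge \varepsilon\big) \le \exp\big(-\varepsilon^2/(2(\sigma^2 + c\varepsilon/3))\big)$, where $\sigma^2 = \sum_i \var(X_i)$.

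The key analytic ingredient is the single-variable MGF estimate: if $Y$ is mean-zero with $Y \le c$ and $\var(Y) = v$, then for every $\lambda > 0$,
\begin{align*}
\bE[e^{\lambda Y}] \le \exp\!\left(\frac{v}{c^2}\big(e^{\lambda c} - 1 - \lambda c\big)\right).
\end{align*}
I would derive this from the fact that $x \mapsto (e^x - 1 - x)/x^2$ (extended to the value $1/2$ at $x = 0$) is nondecreasing on $\bR$; applying this with argument $\lambda Y \le \lambda c$ gives the pointwise bound $e^{\lambda Y} \le 1 + \lambda Y + \frac{e^{\lambda c} - 1 - \lambda c}{(\lambda c)^2}(\lambda Y)^2$, and taking expectations (using $\bE[Y] = 0$ and $\bE[Y^2] = v$) together with $1 + t \le e^t$ yields the claim.

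Next I would tensorize: by independence, $\bE\big[\exp(\lambda \sum_i Y_i)\big] \le \exp\!\big(\frac{\sigma^2}{c^2}(e^{\lambda c} - 1 - \lambda c)\big)$, so Markov's inequality gives, for all $\lambda > 0$,
\begin{align*}
\bP\!\left(\sum_{i=1}^n Y_i \ge \varepsilon\right) \le \exp\!\left(-\lambda \varepsilon + \frac{\sigma^2}{c^2}\big(e^{\lambda c} - 1 - \lambda c\big)\right).
\end{align*}
The optimal choice $\lambda = c^{-1}\log(1 + c\varepsilon/\sigma^2)$ produces the exponent $-\frac{\sigma^2}{c^2} h\!\big(\frac{c\varepsilon}{\sigma^2}\big)$ with $h(u) = (1+u)\log(1+u) - u$, which is the sharp Bennett bound. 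Finally I would invoke the elementary scalar inequality $h(u) \ge \frac{u^2}{2 + 2u/3}$, valid for all $u \ge 0$ (verifiable by comparing derivatives at $u = 0$), with $u = c\varepsilon/\sigma^2$; substituting and simplifying gives exactly $\frac{\varepsilon^2}{2(\sigma^2 + c\varepsilon/3)}$ with $c = b - a$, and the union bound over the two signs supplies the factor $2$.

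There is no genuine obstacle here, as this is a textbook estimate; the only place requiring care is the constant bookkeeping — namely verifying the monotonicity of $(e^x - 1 - x)/x^2$ and the scalar inequality $h(u) \ge u^2/(2 + 2u/3)$ — so that the Bernstein-type denominator comes out with the stated constant $1/3$ rather than something weaker. One could also bypass the optimization and the function $h$ by plugging the suboptimal choice $\lambda = c^{-1}\log(1 + c\varepsilon/\sigma^2)$ directly and bounding $e^{\lambda c} - 1 - \lambda c$ crudely, but I would prefer the route through $h$ since it is simultaneously sharp and transparent.
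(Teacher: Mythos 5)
Your proof is correct: the paper states this lemma as a cited auxiliary result from Bennett (1962) and gives no proof of its own, and your Chernoff--MGF argument (centering, the single-variable bound $\bE[e^{\lambda Y}]\le\exp(v(e^{\lambda c}-1-\lambda c)/c^2)$ via monotonicity of $(e^x-1-x)/x^2$, tensorization, optimizing $\lambda$, and the scalar inequality $h(u)\ge u^2/(2+2u/3)$) is the standard and complete derivation of this Bernstein-type bound. The only nitpick is in your closing remark: the ``suboptimal choice'' you name is in fact the same optimal $\lambda=c^{-1}\log(1+c\varepsilon/\sigma^2)$ you already used, but this does not affect the validity of the main argument.
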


\begin{lemma}[McDiarmid's inequality under Poisson sampling, Lemma 12 of \cite{hao2019unified}]\label{lemma:mcdiarmid}
Let $N\sim \mathsf{Poi}(n)$, and $X_1,X_2, \cdots$ be independent random variables taking value in $\calX$ independent of $N$. Assume that the following bounded difference condition holds for a sequence of real-valued functions $f_n: \calX^n \to \bR$: 
\begin{align*}
\sup_n \sup_{i\in [n]} \sup_{x_1,x_2,\cdots,x_n, x_i'\in \calX} |f_n(x_1,\cdots,x_n) - f_{n+1}(x_1,\cdots,x_{i-1},x_i,x_i',x_{i+1},\cdots,x_n)| \le c.
\end{align*}
Then for any $\varepsilon>0$, 
\begin{align*}
\bP\left(f_N(X_1,\cdots,X_N) \ge \bE[f_N(X_1,\cdots,X_N)] + \varepsilon \right) \le 4\exp\left(-\frac{n\varepsilon^2}{32(n^2c^2 + 1)}\right). 
\end{align*}
\end{lemma}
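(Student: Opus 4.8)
\noindent The plan is to condition on the (Poisson) sample size $N$, apply the classical bounded-differences inequality at a \emph{fixed} sample size, and then separately control the fluctuations of $N$ itself. Write $F\triangleq f_N(X_1,\dots,X_N)$ and $g(m)\triangleq\bE[f_m(X_1,\dots,X_m)]$. Applying the insertion hypothesis with index $i=m$ gives $|f_m(x_{1:m})-f_{m+1}(x_{1:m},x')|\le c$ for all arguments $x'$, so taking expectations yields $|g(m+1)-g(m)|\le c$; hence $g$ is $c$-Lipschitz on $\NN$, and by the tower rule $\bE F=\bE[g(N)]$, which lets us decompose
\begin{align*}
F-\bE F=\underbrace{\big(f_N(X_{1:N})-g(N)\big)}_{=:A}+\underbrace{\big(g(N)-\bE[g(N)]\big)}_{=:B}.
\end{align*}
It then suffices to bound $\bP(A\ge\varepsilon/2)$ and $\bP(B\ge\varepsilon/2)$ and take a union bound; moreover we may assume $\varepsilon$ exceeds a suitable multiple of $c\sqrt n$, since otherwise the claimed bound already exceeds $1$ and there is nothing to prove.

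For the term $A$, condition on $\{N=m\}$. Then $f_m(X_{1:m})$ is a function of the independent coordinates $X_1,\dots,X_m$, and the insertion hypothesis used twice (insert the new coordinate, then delete the old one) shows that replacing any single $X_i$ changes $f_m$ by $O(c)$; hence the Doob martingale of $(X_1,\dots,X_m)$ has increments of size $O(c)$, and the classical bounded-differences (Azuma--Hoeffding) inequality gives $\bP\big(f_m(X_{1:m})-g(m)\ge\varepsilon/2\mid N=m\big)\le\exp(-\varepsilon^2/(Cmc^2))$ for an absolute constant $C>0$. Averaging over $N\sim\Poi(n)$ and splitting the sum at $m=2n$: the terms with $m\le 2n$ contribute at most $\exp(-\varepsilon^2/(2Cnc^2))$, while the terms with $m>2n$ contribute at most $\bP(N\ge 2n)\le e^{-n/3}$ by the Poisson upper tail (Lemma~\ref{lemma:poissontail} with $\delta=1$). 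Thus $\bP(A\ge\varepsilon/2)\le\exp(-\varepsilon^2/(2Cnc^2))+e^{-n/3}$.

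For the term $B$, since $g$ is $c$-Lipschitz on $\NN$ we have $|g(N)-g(n)|\le c|N-n|$ and $|g(n)-\bE g(N)|\le c\,\bE|N-n|\le c\sqrt n$, so $B\le c|N-n|+c\sqrt n$ and therefore $\bP(B\ge\varepsilon/2)\le\bP\big(|N-n|\ge(\varepsilon/2-c\sqrt n)/c\big)$. Once $\varepsilon\ge 4c\sqrt n$ this is at most $\bP(|N-n|\ge\varepsilon/(4c))$, which the Poisson concentration inequality (Lemma~\ref{lemma:poissontail}) bounds by $2\exp(-c'\varepsilon^2/(nc^2))$ for an absolute constant $c'>0$; the boundary range $c\sqrt n\lesssim\varepsilon<4c\sqrt n$ is harmless because there the target bound is still $\ge 1$. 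Combining the two estimates, taking the union bound, and folding the additive $e^{-n/3}$ term and the $c\sqrt n$ shift into the generous numerical constant $32$ and the ``$+1$'' of $n^2c^2+1$ yields $\bP(F-\bE F\ge\varepsilon)\le 4\exp(-n\varepsilon^2/(32(n^2c^2+1)))$, as claimed. A slightly slicker variant avoids splitting $\varepsilon$: conditionally on $N$ the Hoeffding MGF bound gives $\bE[e^{\lambda A}\mid N]\le e^{\lambda^2 Nc^2/2}$, so $\bE[e^{\lambda(F-\bE F)}]\le\bE[e^{\lambda(g(N)-\bE g(N))+\lambda^2Nc^2/2}]$, and this last expectation is computed in closed form from $\bE e^{tN}=e^{n(e^t-1)}$ after bounding $g(N)-\bE g(N)$ as above, then optimizing over $\lambda$.

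The main obstacle is that $f$ is controlled only through its \emph{differences}, so that neither $f_m$ (whose oscillation can be as large as $\asymp mc$) nor the random size $N$ is bounded. The decomposition $F-\bE F=A+B$ is chosen precisely so that $A$ is, conditionally on $N$, a genuine bounded-differences function — handled by Azuma -- while $B$ is a Lipschitz function of a Poisson variable — handled by a Poisson tail bound. The one delicate point is the upper tail of $N$, which is heavier than Gaussian: for $m\gg n$ the conditional sub-Gaussian rate $\varepsilon^2/(mc^2)$ degrades, so one must check — as above, by cutting the $m$-sum at $2n$ and restricting to the nonvacuous range $\varepsilon\gtrsim c\sqrt n$ (and, for the MGF route, to $\lambda c=O(1)$) — that these large-$N$ contributions are dominated by the target exponent, with the slack absorbed into the constants $32$ and $+1$.
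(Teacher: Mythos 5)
First, a point of reference: the paper never proves this lemma---it is imported verbatim as Lemma 12 of \cite{hao2019unified} and used as a black box---so there is no in-paper argument to compare yours against, and I can only assess your proof on its own terms. Your overall strategy (decompose $f_N-\bE f_N$ into the conditional-on-$N$ bounded-differences part $A$ and the $c$-Lipschitz-in-$N$ part $B$, then apply McDiarmid at fixed sample size to $A$ and Poisson concentration to $B$) is the natural one, and it does deliver a bound of the claimed form in the range $c\sqrt{n}\lesssim\varepsilon\lesssim nc$, modulo retuning constants: with your threshold $\varepsilon\ge 4c\sqrt{n}$ the Poisson upper tail gives exponent $\varepsilon^2/(48nc^2)$, which does not dominate $n\varepsilon^2/(32(n^2c^2+1))$ when $n^2c^2\gg 1$; pushing the threshold to about $6.6\,c\sqrt{n}$, which is still inside the vacuous range, repairs this.

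The genuine gap is at large $\varepsilon$. You assert that the additive $e^{-n/3}$ from $\bP(N>2n)$ and the sub-exponential branch of the Poisson tail are ``dominated by the target exponent'' once $\varepsilon\gtrsim c\sqrt{n}$; they are not. Having $e^{-n/3}\le 4\exp(-n\varepsilon^2/(32(n^2c^2+1)))$ already forces $\varepsilon^2\lesssim n^2c^2+1$, and for $\varepsilon>4nc$ the tail $\bP(N-n\ge \varepsilon/(4c))$ is only $\exp(-\Theta(\varepsilon/c))$, again too weak; both obstructions appear at $\varepsilon\asymp nc$, a factor $\sqrt{n}$ above your non-vacuity threshold, so an entire range is unhandled. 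Worse, no argument can close this gap, because the inequality as reproduced here fails there: take $f_m(x_1,\dots,x_m)=mc$, which satisfies the insertion hypothesis with equality, so that $f_N-\bE f_N=(N-n)c$. With $c=1$ and $\varepsilon=1000n$ the claimed right-hand side is at most $4\exp(-10^6n^3/(64n^2))=4\exp(-15625\,n)$, whereas $\bP(N\ge 1001n)\ge \exp(-n(1001\log 1001-1000)-O(\log n))=\exp(-5916\,n-O(\log n))$, which is far larger. The cited source must therefore carry an implicit restriction on $\varepsilon$ or a Bernstein-type (rather than purely sub-Gaussian) bound; your MGF variant, with $\lambda c=O(1)$, naturally produces exactly such a Bernstein form. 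None of this affects the paper downstream---in \eqref{eq:concentration_final} one has $\varepsilon=O(1)$ while $nc=\Theta(n^{c_2}\log n)$---but your write-up should either state the restriction $\varepsilon=O(nc)$ explicitly or prove the Bernstein version instead of claiming the large-$N$ contributions can be absorbed into the constants.
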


\begin{lemma}[Some properties of Charlier polynomials, Proof of Lemma 13 of \cite{hao2019unified}, and Lemma 30 of \cite{han2018local}]\label{lemma:charlier}
Let the function $g_{d,x}(\cdot)$ be defined in \eqref{eq:g_function}, with $d\in \NN$, $x\in [0,1]$. Then for any $z\in [0,1]$ and $d\ge 1$, the following identity holds: 
\begin{align*}
g_{d,x}\left(z+\frac{2}{n}\right) - g_{d,x}(z) = \frac{2d}{n}g_{d-1,x}(z). 
\end{align*}
Moreover, if $nz/2\in \NN$ and $\max\{|z-x|, 8d/n, \sqrt{8zd/n} \}\le \Delta$, then
	\begin{align*}
	|g_{d,x}(z)|  \le (2\Delta)^d.
	\end{align*}
\end{lemma}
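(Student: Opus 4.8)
\emph{Proof proposal.} The plan is to view $g_{d,x}(\cdot)$ as the ``umbral binomial power'' $(\,\cdot - x)^d$ assembled from step-$2/n$ falling factorials, and to derive both claims by algebra on this representation rather than by any analytic input.

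For the finite-difference identity, set $q_r(z):=\prod_{d''=0}^{r-1}\bigl(z-\tfrac{2d''}{n}\bigr)$ for $r\ge 0$ (the empty product being $1$), so that \eqref{eq:g_function} reads $g_{d,x}(z)=\sum_{d'=0}^{d}\binom{d}{d'}(-x)^{d-d'}q_{d'}(z)$. The crux is the discrete Leibniz rule $q_r(z+\tfrac 2n)-q_r(z)=\tfrac{2r}{n}\,q_{r-1}(z)$, which I would verify directly: re-indexing the product gives $q_r(z+\tfrac 2n)=(z+\tfrac 2n)\,q_{r-1}(z)$, while $q_r(z)=q_{r-1}(z)\bigl(z-\tfrac{2(r-1)}{n}\bigr)$, so subtracting factors out $q_{r-1}(z)$ and leaves $(z+\tfrac2n)-(z-\tfrac{2(r-1)}{n})=\tfrac{2r}{n}$. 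Applying this termwise to $g_{d,x}(z+\tfrac2n)-g_{d,x}(z)$ annihilates the $d'=0$ summand, and then the identity $\binom{d}{d'}d'=d\binom{d-1}{d'-1}$ together with the substitution $e=d'-1$ collapses the difference to $\tfrac{2d}{n}\sum_{e=0}^{d-1}\binom{d-1}{e}(-x)^{d-1-e}q_e(z)=\tfrac{2d}{n}g_{d-1,x}(z)$.

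For the magnitude bound, I would first reduce to a three-term recurrence. Since $q_r(z)=(\tfrac2n)^r\binom{nz/2}{r}r!$, the exponential generating function of $\{g_{d,x}(z)\}_{d\ge0}$ is the binomial convolution of $e^{-xt}$ and $(1+\tfrac{2t}{n})^{nz/2}$, i.e. $F(t):=\sum_{d\ge0}g_{d,x}(z)\tfrac{t^d}{d!}=e^{-xt}(1+\tfrac{2t}{n})^{nz/2}$, which under the hypothesis $nz/2\in\NN$ is an honest polynomial in $t$. Logarithmic differentiation gives $(1+\tfrac{2t}{n})F'(t)=\bigl(z-x-\tfrac{2xt}{n}\bigr)F(t)$, and matching the coefficient of $t^d/d!$ yields
$$g_{d+1,x}(z)=\Bigl(z-x-\tfrac{2d}{n}\Bigr)g_{d,x}(z)-\tfrac{2xd}{n}\,g_{d-1,x}(z),\qquad g_{0,x}(z)=1,\ g_{1,x}(z)=z-x.$$
Then I would induct on $d$: the hypothesis $\max\{|z-x|,8d/n,\sqrt{8zd/n}\}\le\Delta$ is monotone in $d$, so from $|g_{d,x}(z)|\le(2\Delta)^d$ and $|g_{d-1,x}(z)|\le(2\Delta)^{d-1}$ one gets, using $|z-x-\tfrac{2d}{n}|\le\Delta+\tfrac14\Delta$ and $\tfrac{2xd}{n}\le\tfrac{2zd}{n}+\tfrac{2\Delta d}{n}\le\tfrac14\Delta^2+\tfrac14\Delta^2$, the bound $|g_{d+1,x}(z)|\le\tfrac54\Delta(2\Delta)^d+\tfrac12\Delta^2(2\Delta)^{d-1}=\tfrac32\Delta(2\Delta)^d\le(2\Delta)^{d+1}$; the base cases $d=0,1$ are immediate.

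The main obstacle is the magnitude bound: one must locate the right recurrence (equivalently, the generating-function identity) and then check that the two error contributions in the inductive step are each controlled by exactly a quarter of $\Delta$ times $(2\Delta)^d$, so that their sum plus the $\tfrac54\Delta(2\Delta)^d$ leading term stays under the target $(2\Delta)^{d+1}=2\Delta(2\Delta)^d$ — this is precisely what forces the constants $8d/n\le\Delta$ and $\sqrt{8zd/n}\le\Delta$ in the hypothesis. The finite-difference identity is routine by comparison, once the step-$2/n$ falling factorials and their discrete Leibniz rule have been isolated.
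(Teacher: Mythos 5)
Your proof is correct. Note that the paper does not prove this lemma itself: it is imported verbatim from the cited references (the proof of Lemma 13 of \cite{hao2019unified} and Lemma 30 of \cite{han2018local}), so there is no in-paper argument to compare against. Your derivation is a valid self-contained substitute. The finite-difference identity via the discrete Leibniz rule $q_r(z+\tfrac2n)-q_r(z)=\tfrac{2r}{n}q_{r-1}(z)$ and the absorption $\binom{d}{d'}d'=d\binom{d-1}{d'-1}$ is exactly right. For the magnitude bound, your route through the exponential generating function $e^{-xt}(1+\tfrac{2t}{n})^{nz/2}$ and the resulting three-term recurrence $g_{d+1}=(z-x-\tfrac{2d}{n})g_d-\tfrac{2xd}{n}g_{d-1}$ is a genuinely different tactic from the usual one in the cited sources, which bound the defining sum term by term (comparing $\prod_{d''<d'}(z-\tfrac{2d''}{n})$ with $z^{d'}$ and controlling the error); the recurrence-plus-induction argument is cleaner and makes transparent where each piece of the hypothesis is used. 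I checked the arithmetic in the inductive step: with the condition taken at level $d+1$ (which by monotonicity in $d$ licenses the inductive hypotheses at levels $d$ and $d-1$), one has $|z-x-\tfrac{2d}{n}|\le\tfrac54\Delta$ from $8d/n\le\Delta$, and $\tfrac{2xd}{n}\le\tfrac{2zd}{n}+\tfrac{2\Delta d}{n}\le\tfrac12\Delta^2$ from $x\le z+\Delta$ and the two remaining constraints, giving $|g_{d+1}|\le\tfrac32\Delta(2\Delta)^d\le(2\Delta)^{d+1}$ as you state. Two minor remarks: the recurrence actually holds for all real $z$ (as a polynomial identity, or by interpreting the generating function as a formal power series), so the hypothesis $nz/2\in\NN$ is not needed for your argument, only inherited from the lemma statement; and you implicitly use $x\ge0$ when writing $|-\tfrac{2xd}{n}|=\tfrac{2xd}{n}$, which is guaranteed by $x\in[0,1]$.
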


\begin{lemma}[Difference of Binomial CDF, Corollary 4 of \cite{jiao2020bias}]\label{lemma:cdf_diff}
For $n\in \NN$, let $F_n$ be the CDF of a normalized Binomial random variable $X/n$ with $X\sim \mathsf{B}(n,1/2)$, i.e. $F_n(t) = \bP(X/n \le t)$. Then there exists an absolute constant $C>0$ such that for any $t\in [0,1]$ and $n\ge 1$, 
\begin{align*}
|F_{n+1}(t) - F_n(t)| \le \frac{C}{\sqrt{n}}. 
\end{align*}
\end{lemma}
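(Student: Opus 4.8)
The plan is to prove the bound by coupling the two binomial distributions and then invoking the standard estimate on the largest atom of a symmetric binomial.

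First I would realize $X_{n+1}\sim\mathsf{B}(n+1,1/2)$ as $X_{n+1}=X_n+B$, where $X_n\sim\mathsf{B}(n,1/2)$ and $B\sim\Bern(1/2)$ are independent. Conditioning on $B$ gives, for every $t$,
\begin{align*}
F_{n+1}(t)=\bP(X_n+B\le (n+1)t)=\tfrac12\bP\big(X_n\le (n+1)t\big)+\tfrac12\bP\big(X_n\le (n+1)t-1\big),
\end{align*}
and subtracting $F_n(t)=\bP(X_n\le nt)$, written trivially as $\tfrac12\bP(X_n\le nt)+\tfrac12\bP(X_n\le nt)$, yields
\begin{align*}
F_{n+1}(t)-F_n(t)=\tfrac12\big[\bP(X_n\le (n+1)t)-\bP(X_n\le nt)\big]+\tfrac12\big[\bP(X_n\le (n+1)t-1)-\bP(X_n\le nt)\big].
\end{align*}

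Next I would observe that for $t\in[0,1]$ we have $nt\le (n+1)t$ and $(n+1)t-1\le nt$, so the first bracket equals $\bP\big(nt<X_n\le (n+1)t\big)\ge 0$ and the second equals $-\bP\big((n+1)t-1<X_n\le nt\big)\le 0$; the two half-open intervals $(nt,(n+1)t]$ and $((n+1)t-1,nt]$ have lengths $t$ and $1-t$, both at most $1$. Since a half-open real interval of length at most $1$ contains at most one integer, each of these probabilities is bounded by $\max_j\bP(X_n=j)$, and hence
\begin{align*}
|F_{n+1}(t)-F_n(t)|\le \max_j\bP(X_n=j)\qquad \text{uniformly in } t\in[0,1].
\end{align*}
Finally I would bound the modal probability: $\mathsf{B}(n,1/2)$ is maximized at $j=\lfloor n/2\rfloor$, so $\max_j\bP(X_n=j)=\binom{n}{\lfloor n/2\rfloor}2^{-n}$, and Stirling's formula (equivalently the central-binomial bound $\binom{2m}{m}\le 4^m/\sqrt{\pi m}$, treating the odd-$n$ case analogously and checking the finitely many small $n$ directly) yields an absolute constant $C>0$ with $\binom{n}{\lfloor n/2\rfloor}2^{-n}\le C/\sqrt n$ for all $n\ge1$. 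This gives $|F_{n+1}(t)-F_n(t)|\le C/\sqrt n$ and completes the proof.

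I do not expect a genuine obstacle here; the only two steps requiring a line of care are the elementary claim that an interval of length at most $1$ contains at most one integer (this also transparently absorbs the degenerate case $t=0$, where $(n+1)t-1<0$ so the shifted event has probability $0$), and making the Stirling estimate on the central binomial coefficient uniform over all $n\ge 1$.
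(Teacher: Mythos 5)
Your argument is correct. Note first that the paper itself offers no proof of this lemma: it is imported wholesale as Corollary~4 of \cite{jiao2020bias}, so there is nothing in the paper to compare against step by step; what you have written is a self-contained elementary replacement for that citation. The coupling $X_{n+1}=X_n+B$ with $B\sim\Bern(1/2)$ independent gives exactly the identity you state, the sign analysis of the two brackets is right (for $t\in[0,1]$ one has $nt\le (n+1)t$ and $(n+1)t-1\le nt$), and the reduction of each bracket to the probability of a half-open interval of length $t$ (respectively $1-t$) is clean. The two points you flag as needing care are indeed the only ones, and both go through: a half-open interval of length at most $1$ contains at most one integer (two integers in $(a,a+\ell]$ would be at distance $<\ell\le 1$, impossible), and the modal probability of $\mathsf{B}(n,1/2)$ is $\binom{n}{\lfloor n/2\rfloor}2^{-n}=O(n^{-1/2})$ uniformly in $n\ge 1$ by the central binomial estimate. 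One tiny remark: your final bound is actually $\tfrac12(t\text{-interval term})+\tfrac12((1-t)\text{-interval term})\le\max_j\bP(X_n=j)$, which is what you claim, but you could even note that since the two intervals are disjoint for $t\in(0,1)$ the bound $\tfrac12\max_j\bP(X_n=j)$ already suffices; this is cosmetic and does not affect correctness.
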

%!TEX root = main.tex

\section{Proof of Main Lemmas}
\subsection{Proof of Corollary \ref{cor:functional}}
If $\varepsilon'=0$, then following the same lines as \cite{acharya2017unified} shows that the PML plug-in estimator attains the rate-optimal sample complexity for estimating these properties to accuracy $2\varepsilon$. Hence, here it suffices to prove that the additional error $\varepsilon'$ satisfies $\varepsilon' = O(n^{-1/3}) = o(\varepsilon)$ for a large enough $\constA>0$ in all these examples. 
\begin{enumerate}
	\item Entropy: to obtain a meaningful estimation performance for entropy, we need $k=O(n\log n)$ \cite{Valiant--Valiant2011,Jiao--Venkat--Han--Weissman2015minimax,wu2016minimax}. Therefore, for each $q\in\calM_k$ which is $(n^{-\constA},3n^{-\constA/2})$-close to the distribution $p$, we have
	\begin{align*}
	d(p,q) &\le \sum_{i=1}^k \left|p_i\log p_i - q_i\log q_i \right| \\
	&\le \sum_{i: p_i\le n^{-\constA}} \left(|p_i\log p_i| + |q_i\log q_i| \right) + \sum_{i: p_i>n^{-\constA}} |p_i\log p_i - q_i\log q_i|. 
	\end{align*}
	For any $p_i>n^{-\constA}$, we have
	\begin{align*}
	\frac{p_i}{1+3n^{-\constA/2}}\log\frac{1}{p_i} \le q_i\log \frac{1}{q_i} \le p_i\log \frac{1+3n^{-\constA/2}}{p_i}.
	\end{align*}
	Therefore, 
	\begin{align*}
	d(p,q) &\le 2k\cdot \frac{\constA\log n}{n^{\constA}} + \sum_{i: p_i>n^{-\constA}} \left(\frac{3n^{-\constA/2}}{1+3n^{-\constA/2}}\cdot p_i\log \frac{1}{p_i} + p_i\log(1+3n^{-\constA/2})\right) \\
	&= O\left(\frac{k}{n^{\constA}} + \frac{\log k}{n^{\constA/2}} \right),
	\end{align*}
	and choosing any $\constA \ge 2$ gives that $\varepsilon' = O(n^{-1/3})$. 
	\item Support size: it is assumed in \cite{wu2019chebyshev} that $p_i\ge 1/k$ for all $i$ and $k=O(n\log n)$. Therefore, after choosing $\constA \ge 2$, any distribution $q$ which is $(n^{-\constA},3n^{-\constA/2})$-close to the distribution $p$ satisfying the above assumption must have the same support as $p$. Consequently, $d(p,q) = 0$, and $\varepsilon'=0$ for the support size estimation. 
	\item Support coverage: by \cite{orlitsky2016optimal}, for the support coverage estimation it amounts to consider the functional $S_m(p) = \sum_{i=1}^k (1-(1-p_i)^m)$ with $m=O(n\log n)$. For any distribution $q$ which is $(n^{-\constA},3n^{-\constA/2})$-close to the distribution $p$, we have
	\begin{align*}
	d(p,q) &= |S_m(p) - S_m(q)| = \left|\sum_{i=1}^k \left(1 - mp_i - (1-p_i)^m \right) - \sum_{i=1}^k \left(1 - mq_i - (1-q_i)^m \right) \right| \\
	&\le \sum_{i: p_i\le n^{-\constA}} \left( \left|1-mp_i - (1-p_i)^m \right| + \left|1 - mq_i - (1-q_i)^m \right|\right) \\
	&\qquad + \sum_{i: p_i>n^{-\constA}} \left|m(p_i-q_i) + (1-p_i)^m - (1-q_i)^m \right| \\
	&\stepa{\le} \sum_{i: p_i\le n^{-\constA}} m^2(p_i^2 + q_i^2) + \sum_{i: p_i>n^{-\constA}} 2m \cdot |p_i - q_i| \\
	&\le \frac{m^2}{n^{\constA}}\sum_{i=1}^k (p_i + q_i) + \sum_{i=1}^k 2m\cdot \frac{p_i}{1+3n^{-\constA/2}} \\
	&= O\left(\frac{m^2}{n^{\constA}} + \frac{m}{n^{\constA/2}}\right), 
	\end{align*}
	where (a) follows from the mean value theorem applied to the function $p\mapsto mp + (1-p)^m$. Hence, choosing $\constA \ge 3$ gives $\varepsilon'=O(n^{-1/3})$.
	\item Distance to uniformity: by \cite{Valiant--Valiant2011power,jiao2018minimax}, we may assume that $k=O(n\log n)$. Then it is easy to see that for close pairs $(p,q)$, we have
	\begin{align*}
	d(p,q) = \left|\sum_{i=1}^k \left|p_i - \frac{1}{k}\right| - \sum_{i=1}^k \left|q_i - \frac{1}{k}\right|\right| \le \sum_{i=1}^k |p_i - q_i| = O\left(\frac{k}{n^{\constA}} + \frac{1}{n^{\constA/2}}\right).
	\end{align*}
	Hence, choosing any $\constA \ge 2$ suffices to give $\varepsilon'=O(n^{-1/3})$.
\end{enumerate}

\subsection{Proof of Lemma \ref{lemma:continuity}}
Define a grid $\calC=\{c_0,c_1,\cdots,c_M\}$ on $[0,1]$ as follows: let $c_0=0$, and for $i=1,\cdots,M$, let
\begin{align*}
c_i = \frac{1}{2n^{\constA}}\left(1 + \frac{1}{n^r}\right)^{i-1}, 
\end{align*}
where $M = \Theta(n^r\log n)$ is the solution to $c_M\le 1<c_{M+1}$. Define $\calQ$ to be the set of all discrete measures $q$ supported on $\NN$ with all masses belonging to the grid $\calC$ and total mass $q(\NN)$ at most one. Clearly, the number of symbols with probability mass $c_i$ under any $q\in\calQ$ is at most $2n^{\constA}$ for each $i=1,\cdots,M$. Hence, modulo the equivalence relation that $q\sim q'$ if $q$ coincides with $q'$ after some permutation of the domain elements and adding/removing domain elements with zero probability, we have
\begin{align*}
|\calQ| \le (2n^{\constA})^M = \exp\left(\Theta(n^r\log n)\right). 
\end{align*}
Now let $\calN$ be the set of all normalized probability measures $q/q(\NN)$ with $q\in \calQ$, clearly $|\calN| \le |\calQ| = \exp(\Theta(n^r\log n))$. Moreover, since $q(\NN)\le 1$ and $q$ only takes value in $\calC$, each probability measure in $\calN$ also has a minimum non-zero probability mass at least $1/(2n^{\constA})$, which implies $\calN\subseteq \calM_0$ as claimed.

Next we prove that the above set $\calN$ also satisfies the second condition of Lemma \ref{lemma:continuity}. We first introduce the following $\chi^m$-divergence for $m\ge 2$: 
\begin{align*}
\chi^m(P\|Q) = \bE_Q\left[\left(\frac{dP}{dQ}\right)^m \right].
\end{align*}
It is easy to verify the product rule $\chi^m(P_1\otimes P_2\|Q_1\otimes Q_2) = \chi^m(P_1\|Q_1)\cdot \chi^m(P_2\|Q_2)$. The data processing inequality for $\chi^m$ also holds because of the convexity of function $t\mapsto t^m$ on $\bR_+$. The following inequality will be very useful: for $P=\Poi(\lambda_1), Q=\Poi(\lambda_2)$ with $|\lambda_1/\lambda_2 - 1|\le \delta < 1/m$, it holds that
\begin{align}
\chi^m(P\|Q) &= \sum_{t=0}^\infty e^{(m-1)\lambda_2 - m\lambda_1}\cdot \frac{1}{t!}\left(\frac{\lambda_1^m}{\lambda_2^{m-1}}\right)^t \nonumber = \exp\left(\lambda_2 \left( \left(\frac{\lambda_1}{\lambda_2}\right)^m - m\left(\frac{\lambda_1}{\lambda_2} - 1\right) - 1 \right) \right) \nonumber\\
&\le \exp\left(\lambda_2\left(e^{m(\lambda_1/\lambda_2-1)} - m\left(\frac{\lambda_1}{\lambda_2} - 1\right) - 1 \right) \right) \le \exp(\lambda_2 m^2\delta^2), \label{eq:chi_m_divergence}
\end{align}
where the last step follows from $e^x\le 1+x+x^2$ for $x\le 1$. 

Now we consider the following Poissonized model: for each non-negative measure $q$ supported on $\NN$ (not necessarily a probability measure), let the distribution $q_h$ of histograms under $q$ be the product distribution $\prod_{j\in\NN} \Poi(nq_j)$. An equivalent definition of $q_h$ is the distribution of histograms when one draws $N$ i.i.d. samples from the normalized probability distribution $q/q(\NN)$, with an independent Poisson distributed sample size $N\sim \Poi(nq(\NN))$. We will write ${\bf q}_h$ to be the distribution over the space of profiles $\cup_{t=1}^\infty \Phi_t$ induced by $q_h$. 

We claim that for each distribution $p\in \calM_0$, there exists some $q\in \calQ$ such that both quantities $\chi^m({\bf p}_h \|{\bf q}_h)$ and $\chi^m({\bf q}_h \|{\bf p}_h)$ are small. Specifically, we define $q = (q_1,q_2,\cdots)$ such that
\begin{align}\label{eq:lower_quantization}
q_j = \max\{c_i\in \calC: c_i\le p_j \}. 
\end{align}
Since $p\in \calM_0$, we have $q\in \calQ$. Moreover, $\max\{|p_j/q_j - 1|, |q_j/p_j -1| \}\le n^{-r}$ for each $j\in \NN$. Since the profile is a deterministic function of the histogram, by the data processing inequality of the $\chi^m$-divergence, as long as $m\le n^{r}$ we have\footnote{Note that the profile distribution ${\bf p}_h$ is invariant with permutations of $p$ and adding/removing zero-probability symbols in $p$, the quotient with the equivalence relation used in the upper bound of $|\calQ|$ may be taken.}
\begin{align}\label{eq:chi_m_profile}
\chi^m({\bf p}_h\|{\bf q}_h) \le \chi^m(p_h \| q_h) \le \prod_{j\in\NN} \exp\left(nq_jm^2n^{-2r} \right) \le \exp\left(n^{1-2r}m^2\right), 
\end{align}
where the second inequality follows from the product rule of the $\chi^m$-divergence and \eqref{eq:chi_m_divergence}, and the last inequality follows from $\sum_{j\in\NN} q_j\le \sum_{j\in\NN} p_j = 1$. Analogously, we also have $\chi^m({\bf q}_h\|{\bf p}_h) \le \exp\left(n^{1-2r}m^2\right)$. 

Next we translate the $\chi^m$-divergence bound in \eqref{eq:chi_m_profile} into the probability bounds. Specifically, for any $S\subseteq \Phi_n\subseteq \cup_{t=1}^\infty \Phi_t$, the data processing inequality (or simply H\"{o}lder's inequality) gives
\begin{align}\label{eq:chi_m_data-processing}
\chi^m({\bf p}_h \| {\bf q}_h) \ge \frac{{\bf p}_h(S)^{m}}{{\bf q}_h(S)^{m-1}}. 
\end{align}
Furthermore, the profile probability under the Poissonized model and the original sampling model can be related as follows: for $S\subseteq \Phi_n$ and any non-negative discrete measure $q$, 
\begin{align}\label{eq:poisson-to-multi}
{\bf q}_h(S) = \nprob{\frac{q}{q(\NN)}}{S} \cdot \bP(\Poi(nq(\NN)) = n). 
\end{align}
Since $1\ge q(\NN)\ge (1-n^{-r})p(\NN) = 1-n^{-r}$ by \eqref{eq:lower_quantization}, we have
\begin{align*}
\bP(\Poi(nq(\NN)) = n) &\ge \bP(\Poi(n(1-n^{-r})) = n) = \bP(\Poi(n)=n)\cdot e^{n^{1-r}}(1-n^{-r})^n \\
&\ge \bP(\Poi(n)=n)\cdot (1-n^{-2r})^n \ge \exp(-c_rn^{1-2r}),
\end{align*}
for some constant $c_r>0$. Combining with the inequalities \eqref{eq:chi_m_profile}, \eqref{eq:chi_m_data-processing} and \eqref{eq:poisson-to-multi}, we conclude that
\begin{align*}
\nprob{p}{S} &\le \nprob{\frac{q}{q(\NN)}}{S}^{1-\frac{1}{m}}\exp\left(n^{1-2r}m + \frac{m-1}{m}c_rn^{1-2r}\right) \\
&= \nprob{\frac{q}{q(\NN)}}{S}^{1-\frac{1}{m}}\exp\left(O(n^{1-2r}m)\right). 
\end{align*}
Choosing $m=c_0n^s<n^r$ completes the proof of \eqref{eq:inequality_q_over_p} of Lemma \ref{lemma:continuity}. Interchanging the roles of $p$ and $q$ we may also arrive at the other inequality \eqref{eq:inequality_p_over_q}. 

\subsection{Proof of Lemma \ref{lemma:local_moments_expectation}}
Throughout the proof, we will use $C,C',\cdots$ to denote large positive constants depending only on $c_1$ which may vary from line to line, and $c,c',\cdots$ to denote the respective small positive constants. 

First we consider the case $d\ge 1$. Using the triangle inequality, we have
\begin{align}
\left|k\cdot \int_{\widetilde{I}_m} (x-x_m)^d \mu_m(dx) - \widehat{M}_{m,d} \right| &\le \left|k\cdot \int_{\widetilde{I}_m} (x-x_m)^d \mu_m(dx)  - M_{m,d} \right| \nonumber\\
&\qquad + \left|\bE[\widehat{M}_{m,d}] - M_{m,d} \right| + |\widehat{M}_{m,d} - \bE[\widehat{M}_{m,d}]|, \label{eq:decomposition}
\end{align}
where $M_{m,d}$ is the local smoothed moment defined in \eqref{eq:smoothed_moments}. We upper bound the expectation of each term in \eqref{eq:decomposition} separately. 

For the first term (which is deterministic), simple algebra gives
\begin{align*}
\left|k\cdot \int_{\widetilde{I}_m} (x-x_m)^d \mu_m(dx)  - M_{m,d} \right| \le \sum_{j=1}^k |p_j-x_m|^d \cdot  \bP\left(\mathsf{Poi}\left(\frac{np_j}{2}\right) \in \frac{n}{2}I_m\right)\1(p_j\notin \widetilde{I}_m). 
\end{align*}
By the Poisson tail inequality (cf. Lemma \ref{lemma:poissontail}), for $p_j\notin \widetilde{I}_m$ we have
\begin{align*}
\bP\left(\mathsf{Poi}\left(\frac{np_j}{2}\right) \in \frac{n}{2}I_m\right) \le \exp\left(-cn\cdot \min\left\{\frac{|x_m - p_j|^2}{p_j}, |x_m - p_j| \right\}\right)
\end{align*}
for some absolute constant $c>0$ independent of $(m,n,c_1,c_2)$. For $p_j\notin \widetilde{I}_m$ and constant $c_1>0$ large enough, simple algebra shows that for $1\le d\le D$, it holds that 
\begin{align*}
|p_j-x_m|^d \cdot \exp\left(-cn\cdot \min\left\{\frac{|x_m - p_j|^2}{p_j}, |x_m - p_j| \right\}\right) \le C\widetilde{\ell}_m^d\cdot n^{-5},
\end{align*}
for some constant $C>0$ depending only on $c_1$. Hence, the first term of \eqref{eq:decomposition} is upper bounded as
\begin{align}\label{eq:first_term}
\left|k\cdot \int_{\widetilde{I}_m} (x-x_m)^d \mu_m(dx)  - M_{m,d} \right| \le \frac{C}{n^5}\cdot \widetilde{\ell}_m^d. 
\end{align}

As for the second term, first note that
\begin{align*}
&\bE\left[\sum_{j=1}^k \sum_{s\in nI_m/2} \bP\left(\mathsf{B}\left(h_j,\frac{1}{2}\right)=s\right)\cdot g_{d,x_m}\left(\frac{h_j-s}{n/2}\right) \right] \\
&= \sum_{t=0}^\infty \sum_{j=1}^k \sum_{s\in nI_m/2} \frac{1}{2^t}\binom{t}{s} g_{d,x_m}\left(\frac{t-s}{n/2}\right)\cdot e^{-np_j}\frac{(np_j)^t}{t!} \\
&= \sum_{u=0}^\infty \sum_{j=1}^k \sum_{s\in nI_m/2} \frac{1}{2^{u+s}}\binom{u+s}{s} g_{d,x_m}\left(\frac{u}{n/2}\right)\cdot e^{-np_j}\frac{(np_j)^{u+s}}{(u+s)!} \\
&= \sum_{j=1}^k  \left(\sum_{s\in nI_m/2} e^{-np_j/2}\frac{(np_j/2)^s}{s!}\right) \left( \sum_{u=0}^\infty e^{-np_j/2}\frac{(np_j/2)^u}{u!}g_{d,x_m}\left(\frac{u}{n/2}\right)\right) \\
&= \sum_{j=1}^k \bP\left(\mathsf{Poi}\left(\frac{np_j}{2}\right)\in \frac{n}{2}I_m\right)\cdot (p_j - x_m)^d = M_{m,d},
\end{align*}
where we have used that $\bE[g_{d,x_m}(X)] = (p-x_m)^d$ whenever $nX/2\sim \mathsf{Poi}(np/2)$. Now by definition of the modification $\widetilde{g}_{d,x_m}$ in \eqref{eq:tilde_g_function}, we have
\begin{align}\label{eq:second_term_initial}
&|\bE[\widehat{M}_{m,d}] - M_{m,d}| \nonumber\\
&\le \bE\left[ \sum_{j=1}^k \sum_{s\in nI_m/2} \bP\left(\mathsf{B}\left(h_j,\frac{1}{2}\right)=s\right)\cdot \1\left(h_j - s \notin \frac{n}{2}[x_{m,\text{L}}, x_{m,\text{R}}]\right)\right. \nonumber\\
&\left. \qquad\qquad \cdot\left( \left|g_{d,x_m}\left(\frac{h_j-s}{n/2}\right)\right| + \left| g_{d,x_m}(x_{m,\text{L}})\right| + \left| g_{d,x_m}(x_{m,\text{R}})\right| \right) \right] \nonumber \\
&= \bE\left[ \sum_{j=1}^k \1\left(\widehat{p}_{j,1}\in I_m, \widehat{p}_{j,2}\notin [x_{m,\text{L}}, x_{m,\text{R}}]\right)\left( \left|g_{d,x_m}\left(\widehat{p}_{j,2}\right)\right| + \left| g_{d,x_m}(x_{m,\text{L}})\right| + \left| g_{d,x_m}(x_{m,\text{R}})\right| \right) \right],
\end{align}
where $\widehat{p}_{j,1}, \widehat{p}_{j,2}$ are independent random variables with $n\widehat{p}_{j,1}/2, n\widehat{p}_{j,2}/2 \sim \mathsf{Poi}(np_j/2)$, and the last identity follows from the subsampling property of Poisson random variables. To upper bound \eqref{eq:second_term_initial}, note that the condition of Lemma \ref{lemma:charlier} is satisfied with $\Delta\asymp \max\{\widetilde{\ell}_m, |\widehat{p}_{j,2} - x_m|\}$, and therefore 
\begin{align*}
\left|g_{d,x_m}\left(\widehat{p}_{j,2}\right)\right| + \left| g_{d,x_m}(x_{m,\text{L}})\right| + \left| g_{d,x_m}(x_{m,\text{R}})\right| \le Cn^{C'\cdot c_2} |\widehat{p}_{j,2} - x_m|^d
\end{align*}
for some absolute constants $C,C'>0$ depending only on $c_1$ whenever $\widehat{p}_{j,2}\notin [x_{m,\text{L}}, x_{m,\text{R}}]$. Moreover, if $p_j \in \widetilde{I}_m$, then the Poisson tail bound (cf. Lemma \ref{lemma:poissontail}) gives that 
\begin{align*}
\bP(\widehat{p}_{j,1}\in I_m, \widehat{p}_{j,2} = q) \le \bP(\widehat{p}_{j,2} = q) \le \exp\left( -cn\cdot \min\left\{\frac{|x_m - q|^2}{\max\{x_m,q\}}, |x_m - q|\right\} \right)
\end{align*}
for any $q\notin [x_{m,\text{L}}, x_{m,\text{R}}]$. If $p_j\notin \widetilde{I}_m$, then 
\begin{align*}
\bP(\widehat{p}_{j,1}\in I_m, \widehat{p}_{j,2} = q) &= \bP(\widehat{p}_{j,1}\in I_m)\cdot \bP(\widehat{p}_{j,2} = q) \\
&\le \exp\left( -cn\left(\min\left\{\frac{|p_j - x_m|^2}{p_j}, |p_j - x_m|\right\}+ \min\left\{\frac{|p_j - q|^2}{p_j}, |p_j - q|\right\} \right) \right) \\
&\le \exp\left(-c'n \cdot\min\left\{\frac{|x_m - q|^2}{\max\{x_m,q\}}, |x_m - q|\right\}\right). 
\end{align*}
In other words, the above inequality with a small enough constant $c'$ holds regardless of the true probability mass $p_j$. Since for $c_1>0$ large enough, for all $q\notin [x_{m,\text{L}}, x_{m,\text{R}}]$ it holds that
\begin{align*}
|q-x_m|^d \cdot \exp\left(-c'n\cdot \min\left\{\frac{|x_m - q|^2}{\max\{x_m,q\}}, |x_m - q| \right\}\right) \le C\widetilde{\ell}_m^d\cdot n^{-5},
\end{align*}
expanding the expectation in \eqref{eq:second_term_initial} gives the following upper bound on the second term of \eqref{eq:decomposition}: 
\begin{align}\label{eq:second_term}
|\bE[\widehat{M}_{m,d}] - M_{m,d}| \le \frac{Ck}{n^{4 - C'c_2}}\cdot \widetilde{\ell}_m^d. 
\end{align}

For the final term $\bE|\widehat{M}_{m,d} - \bE[\widehat{M}_{m,d}]|$, we will apply the Bennett inequality (cf. Lemma \ref{lemma:bennett}) to the sum of independent random variables $Z_j$, where
\begin{align*}
Z_j \triangleq \sum_{s\in nI_m/2} \bP\left(\mathsf{B}\left(h_j,\frac{1}{2}\right)=s\right)\cdot \widetilde{g}_{d,x_m}\left(\frac{h_j-s}{n/2}\right). 
\end{align*}
By Lemma \ref{lemma:charlier} and the definition of $\widetilde{g}_{d,x_m}$, we have $\|\widetilde{g}_{d,x_m}\|_\infty \le Cn^{C'c_2}\widetilde{\ell}_m^d$. As a result, 
\begin{align*}
|Z_j| \le Cn^{C'c_2}\widetilde{\ell}_m^d, \qquad \bE[Z_j^2] \le (Cn^{C'c_2}\widetilde{\ell}_m^d)^2 \cdot \bP\left(\mathsf{Poi}\left(\frac{np_j}{2}\right)\in \frac{n}{2}I_m\right). 
\end{align*}
Hence, by Bennett inequality, with probability at least $1-n^{-5}$ we have
\begin{align*}
\left| \sum_{j=1}^k (Z_j - \bE[Z_j]) \right| = O\left(n^{C'c_2}\widetilde{\ell}_m^d\left(\sqrt{\sum_{j=1}^k \bP\left(\mathsf{Poi}\left(\frac{np_j}{2}\right) \in \frac{n}{2}I_m\right) \cdot \log n }  + \log n \right)\right). 
\end{align*}
Consequently, using $|\sum_{j=1}^k Z_j|\le k\|\widetilde{g}_{d,x_m}\|_\infty$ almost surely, we conclude that
\begin{align}\label{eq:third_term}
\bE|\widehat{M}_{m,d} - \bE[\widehat{M}_{m,d}]| \le Cn^{C''c_2}\widetilde{\ell}_m^d\left(\sqrt{\sum_{j=1}^k \bP\left(\mathsf{Poi}\left(\frac{np_j}{2}\right) \in \frac{n}{2}I_m\right)}  + \frac{k}{n^5} \right). 
\end{align}
Hence, a combination of \eqref{eq:decomposition}, \eqref{eq:first_term}, \eqref{eq:second_term} and \eqref{eq:third_term} gives the claimed result for $d\ge 1$.

Next we consider the case where $d=0$. By the triangle inequality again, we have
\begin{align}
&\left|\sum_{m'\ge m}\left(k\cdot \mu_{m'}(\widetilde{I}_{m'}) - \widehat{M}_{m',0} \right) \right| \le \sum_{j=1}^k \sum_{m'\ge m} \bP\left( \mathsf{Poi}\left(\frac{np_j}{2}\right) \in \frac{n}{2}I_{m'} \right)\cdot \1(p_j\notin \widetilde{I}_{m'}) \nonumber\\
&\qquad  + \left|\sum_{j=1}^k \left(\bP\left( \mathsf{Poi}\left(\frac{np_j}{2}\right) \in \frac{n}{2}\bigcup_{m'\ge m}I_{m'} \right) - \bP\left(\mathsf{B}\left(h_j, \frac{1}{2}\right)\in \frac{n}{2}\bigcup_{m'\ge m}I_{m'}  \right) \right) \right|. \label{eq:decomposition_d=0}
\end{align}
By Lemma \ref{lemma.localization}, the first term of \eqref{eq:decomposition_d=0} is at most $k/n^4$. Regarding the second term, the subsampling property of Poisson random variables shows that each summand has a zero mean. Moreover, the random variable 
\begin{align*}
Y_j = \bP\left(\mathsf{B}\left(h_j, \frac{1}{2}\right)\in \frac{n}{2}\bigcup_{m'\ge m}I_{m'}  \right)
\end{align*}
is upper bounded by $1$. As for the variance of $Y_j$, we distinguish into three cases: 
\begin{itemize}
	\item If $p_j$ is greater than the right endpoint of $\widetilde{I}_m$, Lemma \ref{lemma.localization} shows that
	\begin{align*}
	\bP\left( \mathsf{Poi}\left(\frac{np_j}{2}\right) \in \frac{n}{2}\bigcup_{m'\ge m}I_{m'} \right) \ge 1-n^{-5}. 
	\end{align*}
	Therefore, 
	\begin{align*}
	\var(Y_j) \le \bE[(1-Y_j)^2] \le \bE[(1-Y_j)] = 1 - \bP\left( \mathsf{Poi}\left(\frac{np_j}{2}\right) \in \frac{n}{2}\bigcup_{m'\ge m}I_{m'} \right) \le n^{-5}. 
	\end{align*}
	\item Similarly, if $p_j$ is smaller than the left endpoint of $\widetilde{I}_m$, Lemma \ref{lemma.localization} shows that
	\begin{align*}
	\var(Y_j) \le \bE[Y_j^2] \le \bE[Y_j] = \bP\left( \mathsf{Poi}\left(\frac{np_j}{2}\right) \in \frac{n}{2}\bigcup_{m'\ge m}I_{m'} \right)\le n^{-5}. 
	\end{align*}
	\item Finally, if $p_j\in \widetilde{I}_m$, then Lemma \ref{lemma.localization} gives
	\begin{align*}
	\bP\left( \mathsf{Poi}\left(\frac{np_j}{2}\right) \in \frac{n}{2}\bigcup_{m'\ge m+1}I_{m'} \right) \le n^{-5}. 
	\end{align*}
	Therefore, 
	\begin{align*}
	\var(Y_j) \le \bE[Y_j^2] \le \bE[Y_j] \le \bP\left(\mathsf{Poi}\left(\frac{np_j}{2}\right)  \in \frac{n}{2}I_m \right) + n^{-5}. 
	\end{align*}
\end{itemize}
Combining all cases, it always holds that
\begin{align*}
\var(Y_j) \le \bP\left(\mathsf{Poi}\left(\frac{np_j}{2}\right)  \in \frac{n}{2}I_m \right) + n^{-5}. 
\end{align*}
Hence, by the Bennett inequality (cf. Lemma \ref{lemma:bennett}), with probability at least $1-n^{-5}$, 
\begin{align*}
\left| \sum_{j=1}^k (Y_j - \bE[Y_j]) \right| = O\left(\sqrt{\sum_{j=1}^k\left(  \bP\left(\mathsf{Poi}\left(\frac{np_j}{2}\right)  \in \frac{n}{2}I_m \right) + n^{-5}\right)\cdot \log n} + \log n \right).
\end{align*}
Therefore, the second term of \eqref{eq:decomposition_d=0} has expectation at most
\begin{align*}
\bE\left| \sum_{j=1}^k (Y_j - \bE[Y_j]) \right| \le C\left(\sqrt{\log n\cdot \sum_{j=1}^k \bP\left(\mathsf{Poi}\left(\frac{np_j}{2}\right) \in \frac{n}{2}I_m\right)}   + \log n + \frac{k}{n^5}\right),
\end{align*}
as claimed. 

\subsection{Proof of Lemma \ref{lemma:total_variance}}
Let $k_m' = \sum_{j=1}^k \1(p_j\in \widetilde{I}_m)$, then Lemma \ref{lemma.localization} gives $
k_m \le k_m' + k/n^5
$. Let $\calM = \{m\in [M]: k_m'\neq 0\}\subseteq[M]$. Since each probability mass $p_j$ lies in at most two different $\widetilde{I}_m$'s, we conclude that
\begin{align}\label{eq:sum}
2 = 2\sum_{j=1}^k p_j \ge \sum_{m\in \calM} \sum_{j=1}^k p_j\cdot \1(p_j\in \widetilde{I}_m) \ge \sum_{m\in \calM} \frac{c_1\log n}{n}\left(m-\frac{5}{4}\right)_+^2 \ge \frac{c_1\log n}{6n}(|\calM|-2)^3,
\end{align}
and consequently
\begin{align*}
|\calM| \le \left(\frac{12n}{c_1\log n}\right)^{\frac{1}{3}} + 2.
\end{align*}

Hence, 
\begin{align*}
\sum_{m=1}^M \widetilde{\ell}_m \sqrt{k_m} &\le \sum_{m=1}^M \widetilde{\ell}_m \left(\sqrt{k_m'} + \sqrt{\frac{k}{n^5}}\right) \\
&\le \sum_{m=1}^M \frac{2c_1m\log n}{n}\left(\sqrt{k_m'} + \sqrt{\frac{k}{n^5}}\right) \\
&\le \frac{2c_1\log n}{n}\sqrt{|\calM|\cdot \sum_{m\in \calM} m^2} + 2\sqrt{\frac{k}{n^5}} \\
&\stepa{=} O\left(\frac{\log n}{n}\cdot \sqrt{\left(\frac{n}{\log n}\right)^{\frac{1}{3}}\cdot \frac{n}{\log n}} + \sqrt{\frac{k}{n^5}} \right) \\
&= O\left(\left(\frac{\log n}{n}\right)^{\frac{1}{3}} + \sqrt{\frac{k}{n^5}}\right), 
\end{align*}
where step (a) follows from an intermediate step of \eqref{eq:sum}. 

\subsection{Proof of Lemma \ref{lemma:bounded_diff}}
Throughout the proof, we will frequently use the fact (which follows from Lemma \ref{lemma:charlier}) that
\begin{align}\label{eq:fact}
\|\widetilde{g}_{d,x_m}\|_{\infty} \le (x_{m,\text{R}} - x_{m,\text{L}})^d, \qquad \forall 0\le d\le D, m\in [M]. 
\end{align}

We first consider the case where $d\ge 1$. It is clear that
\begin{align*}
|\Delta_{m,d}| &= \left| \sum_{s\in nI_m/2}\left[ \bP\left(\mathsf{B}\left(h_j, \frac{1}{2}\right) = s\right)\widetilde{g}_{d,x_m}\left(\frac{h_j-s}{n/2}\right) - \bP\left(\mathsf{B}\left(h_j+1, \frac{1}{2}\right) = s\right)\widetilde{g}_{d,x_m}\left(\frac{h_j+1-s}{n/2}\right) \right]\right| \\
&\le \underbrace{ \sum_{s\in nI_m/2}  \bP\left(\mathsf{B}\left(h_j, \frac{1}{2}\right) = s\right)\cdot \left|\widetilde{g}_{d,x_m}\left(\frac{h_j-s}{n/2}\right)  -  \widetilde{g}_{d,x_m}\left(\frac{h_j+1-s}{n/2}\right)\right|  }_{=: D_1} \\
&\qquad + \underbrace{\sum_{s\in nI_m/2} \left|  \bP\left(\mathsf{B}\left(h_j, \frac{1}{2}\right) \in \frac{n}{2}I_m\right) - \bP\left(\mathsf{B}\left(h_j+1, \frac{1}{2}\right) \in \frac{n}{2}I_m\right) \right| \cdot \left| \widetilde{g}_{d,x_m}\left(\frac{h_j+1-s}{n/2}\right)\right|}_{=: D_2}. 
\end{align*}
We distinguish into two cases $m\in \calM$ and $m\notin \calM$, respectively. 
\begin{itemize}
	\item Case I: $m \in \calM$. By the definition of $\widetilde{g}_{d,x_m}$ and Lemma \ref{lemma:charlier}, we have
	\begin{align*}
	\left|\widetilde{g}_{d,x_m}\left(\frac{h_j-s}{n/2}\right)  -  \widetilde{g}_{d,x_m}\left(\frac{h_j+1-s}{n/2}\right)\right| &\le \frac{2d}{n}\left| \widetilde{g}_{d-1,x_m}\left(\frac{h_j-s}{n/2}\right) \right| \le \frac{2d}{n}(x_{m,\text{R}} - x_{m,\text{L}})^{d-1},
	\end{align*}
	where the last inequality follows from \eqref{eq:fact}. Hence, 
	\begin{align*}
	D_1 \le \frac{2d}{n}(x_{m,\text{R}} - x_{m,\text{L}})^{d-1} \le \frac{2D}{n}(x_{m,\text{R}} - x_{m,\text{L}})^{d-1}. 
	\end{align*}
	As for $D_2$, Lemma \ref{lemma:cdf_diff} gives that
	\begin{align*}
	\left| \bP\left(\mathsf{B}\left(h_j, \frac{1}{2}\right) \in \frac{n}{2}I_m\right) - \bP\left(\mathsf{B}\left(h_j+1, \frac{1}{2}\right) \in \frac{n}{2}I_m\right) \right| \le \frac{2C}{\sqrt{h_j}}. 
	\end{align*}
	Since $m\in \calM$, for $m\ge 2$ we have $h_j\ge nx_{m,\text{L}}\ge n^2(x_{m,\text{R}} - x_{m,\text{L}})^2/(144c_1\log n)$. Moreover, for $m=1$ the above probability difference is trivially upper bounded by $1$. Combining these two cases, we always have
	\begin{align*}
	\left| \bP\left(\mathsf{B}\left(h_j, \frac{1}{2}\right) \in \frac{n}{2}I_m\right) - \bP\left(\mathsf{B}\left(h_j+1, \frac{1}{2}\right) \in \frac{n}{2}I_m\right) \right| = O\left( \frac{\log n}{n(x_{m,\text{R}} - x_{m,\text{L}} )} \right).
	\end{align*}
	With the help of \eqref{eq:fact} again, we conclude that
	\begin{align*}
	|\Delta_{m,d}| \le  D_1 + D_2 = O\left(\frac{\log n}{n}\cdot (x_{m,\text{R}} - x_{m,\text{L}})^{d-1}\right). 
	\end{align*}
	\item Case II: $m\notin \calM$. By \eqref{eq:fact}, it is clear that
	\begin{align*}
	|\Delta_{m,d}| \le (x_{m,\text{R}} - x_{m,\text{L}})^{d}\cdot \left(\bP\left(\mathsf{B}\left(h_j, \frac{1}{2}\right) \in \frac{n}{2}I_m\right) + \bP\left(\mathsf{B}\left(h_j+1, \frac{1}{2}\right) \in \frac{n}{2}I_m\right) \right).
	\end{align*}
	Thanks to the assumption $h_j\notin n[x_{m,\text{L}}, x_{m,\text{R}}]$, the Binomial tail inequality (cf. Lemma \ref{lemma:poissontail}) yields that for $c_1>0$ large enough, we have
	\begin{align*}
	\bP\left(\mathsf{B}\left(h_j, \frac{1}{2}\right) \in \frac{n}{2}I_m\right) + \bP\left(\mathsf{B}\left(h_j+1, \frac{1}{2}\right) \in \frac{n}{2}I_m\right) \le \frac{2}{n^5}. 
	\end{align*}
	Hence, for $m\notin \calM$ we have
	\begin{align*}
	|\Delta_{m,d}|  = O\left( \frac{(x_{m,\text{R}} - x_{m,\text{L}})^{d}}{n^5} \right). 
	\end{align*}
\end{itemize}

Next we deal with the case $d=0$, and it is clear that
\begin{align*}
\left|\sum_{m'\ge m} \Delta_{m,0} \right| &= \left|\bP\left(\mathsf{B}\left(h_j, \frac{1}{2} \right) \in \frac{n}{2}\bigcup_{m'\ge m}I_{m'}\right) - \bP\left(\mathsf{B}\left(h_j+1, \frac{1}{2} \right) \in \frac{n}{2}\bigcup_{m'\ge m}I_{m'} \right)\right| \\
&= \left|\bP\left(\mathsf{B}\left(h_j, \frac{1}{2} \right) \ge \frac{c_1(m-1)^2\log n}{2} \right) - \bP\left(\mathsf{B}\left(h_j+1, \frac{1}{2} \right) \ge \frac{c_1(m-1)^2\log n}{2} \right)\right|. 
\end{align*}
If $m\in \calM$, we use Lemma \ref{lemma:cdf_diff} again to conclude that
\begin{align*}
\left|\bP\left(\mathsf{B}\left(h_j, \frac{1}{2} \right) \ge \frac{c_1(m-1)^2\log n}{2} \right) - \bP\left(\mathsf{B}\left(h_j+1, \frac{1}{2} \right) \ge \frac{c_1(m-1)^2\log n}{2} \right)\right| \le \frac{C}{\sqrt{h_j}},
\end{align*}
which by the same analysis above yields to an upper bound of $O(\log n/(n (x_{m,\text{R}} - x_{m,\text{L}})))$. For the other case $m\notin \calM$, the Binomial tail inequality (cf. Lemma \ref{lemma:poissontail}) again shows that for $c_1>0$ large enough, in the respective scenarios $h_j > nx_{m,\text{R}}$ and $h_j < nx_{m,\text{L}}$, the above Binomial probabilities are both $\ge 1-n^{-5}$ or $\le n^{-5}$, respectively. Hence, for $m\notin \calM$ we have
\begin{align*}
\left|\sum_{m'\ge m} \Delta_{m,0} \right| = O(n^{-5}), 
\end{align*}
as desired. 

\subsection{Proof of Theorem \ref{thm.poisson_approx}}
We shall construct the following local intervals: for $m=1,2,\cdots,M = \sqrt{n/(c_1\log n)}$, define
\begin{align*}
I_m &= \left[\frac{c_1\log n}{n}\cdot (m-1)^2, \frac{c_1\log n}{n}\cdot m^2 \right], \\
I_m' &= \left[\frac{c_1\log n}{n}\cdot \left(m-\frac{4}{3}\right)_+^2, \frac{c_1\log n}{n}\cdot \left(m+\frac{1}{3}\right)^2 \right], \\
I_m'' &= \left[\frac{c_1\log n}{n}\cdot \left(m-2\right)_+^2, \frac{c_1\log n}{n}\cdot \left(m+1\right)^2 \right],
\end{align*}
where $c_1>0$ is some large constant to be specified later, and without loss of generality we assume that $M$ is an integer. We shall also define
$$
x_m = \frac{c_1\log n}{n}\cdot \left( m - \frac{1}{2}\right)^2
$$
to be the center of the $m$-th local interval. Note that $I_m\subseteq I_m' \subseteq I_m''$, and by Lemma \ref{lemma.localization} we have
\begin{align}\label{eq:local_intervals}
\begin{split}
\bP(\Poi(nx)\notin nI_m' \mid x\in I_m ) &\le n^{-5}, \\
\bP(\Poi(nx)\notin nI_m'' \mid x\in I_m') &\le n^{-5}, \\
\bP(\Poi(nx)\notin nI_m \mid x\in I_m - I_{m-1}' - I_{m+1}') &\le n^{-5}. 
\end{split}
\end{align}

\subsubsection{Construction of Local Poisson Polynomials}
We first construct a local Poisson polynomial on each local interval $I_m'$. 
\begin{lemma}\label{lemma.local_approximation}
	Let $f$ be any $1$-Lipschitz function, $\varepsilon>0$, and $c_1>0$ be large enough. Then there exists a constant $C>0$ such that for each $m\in [M]$, there is a sequence of coefficients $(b_j)_{j=0}^{\infty}$ with
	\begin{align}\label{eq.approx_error_local}
	\left|f(x) - \sum_{j=0}^{\infty} b_j\bP(\Poi(nx) = j) \right| \le C\sqrt{\frac{x}{n\log n}}, \quad \forall x\in I_m',
	\end{align}
	where $b_j = 0$ for $j\notin nI_m''$, and
	\begin{align}\label{eq.coeff_bound_local}
	\left|b_j - f\left(\frac{j}{n}\right) \right| \le \frac{C(1+j^{1/2})}{n^{1-\varepsilon}}, \quad \forall j\in nI_m''.
	\end{align}
\end{lemma}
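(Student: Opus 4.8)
The plan is to push a polynomial approximation of $f$ on $I_m''$ into the Poisson-probability basis by means of Charlier-type polynomials, and then truncate the resulting coefficient sequence to the support $nI_m''$. Set $D:=c_2\log n$ with $c_2=c_2(\varepsilon)>0$ a small constant to be fixed at the end, and write $\ell_m:=|I_m''|\asymp c_1 m(\log n)/n$. First I would apply Jackson's theorem (Lemma~\ref{lemma:jackson}) to $f$ on $I_m''$ to obtain $P_m(x)=\sum_{d=0}^{D}a_{m,d}(x-x_m)^d$; since $I_m'\subseteq I_m''$, $\ell_m\asymp c_1 m(\log n)/n$, and $x\asymp c_1 m^2(\log n)/n$ on $I_m'$, the pointwise estimate \eqref{eq.approx_pointwise} rescales to $|f(x)-P_m(x)|\le C(c_1,c_2)\sqrt{x/(n\log n)}$ on $I_m'$ (for $m=1$ this is immediate from \eqref{eq.approx_pointwise} since the left endpoint is $0$). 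Using $1$-Lipschitzness and the fact that the Jackson error is $\ll\ell_m$ on $I_m''$, one gets $|P_m(x)-f(x_m)|\lesssim\ell_m$ on $I_m''$; after replacing $a_{m,0}$ by $f(x_m)$ (which moves $P_m$ by at most the Jackson error, hence preserves the approximation bound up to constants), Lemma~\ref{lemma:coefficients} applied to the shifted interval $I_m''-x_m$ gives $|a_{m,d}|\lesssim\ell_m^{1-d}n^{O(c_2)}$ for $1\le d\le D$. Finally, let $g_{d,x_m}$ denote the Poisson (no sample splitting) analog of \eqref{eq:g_function}, obtained by replacing $2d''/n$ with $d''/n$; it satisfies $\sum_{j\ge 0}g_{d,x_m}(j/n)\,\bP(\Poi(nx)=j)=(x-x_m)^d$ identically in $x$ and obeys the same magnitude bound as in Lemma~\ref{lemma:charlier} (with $n$ in place of $n/2$). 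Then $P_m(x)=\sum_{j\ge 0}\psi_m(j)\,\bP(\Poi(nx)=j)$ with $\psi_m(j):=\sum_{d=0}^{D}a_{m,d}\,g_{d,x_m}(j/n)$, and I would set $b_j:=\psi_m(j)$ for $j\in nI_m''$ and $b_j:=0$ otherwise.

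For the coefficient bound \eqref{eq.coeff_bound_local}: if $j\in nI_m''$ then $|j/n-x_m|\lesssim\ell_m$ by the geometry of the intervals, and (checking that the other two terms $8d/n$ and $\sqrt{8(j/n)d/n}$ in the ``$\Delta$'' of the Charlier bound are $\ll\ell_m$ whenever $c_2\ll c_1$) we get $|g_{d,x_m}(j/n)|\le (C\ell_m)^d$. Since $a_{m,0}=f(x_m)$ and $g_{0,x_m}\equiv 1$, $|b_j-f(x_m)|=\big|\sum_{d\ge 1}a_{m,d}\,g_{d,x_m}(j/n)\big|\le\sum_{d\ge 1}\ell_m^{1-d}n^{O(c_2)}(C\ell_m)^d\lesssim\ell_m\,n^{O(c_2)}$, and adding $|f(x_m)-f(j/n)|\le|j/n-x_m|\lesssim\ell_m$ gives $|b_j-f(j/n)|\lesssim\ell_m\,n^{O(c_2)}\asymp c_1 m(\log n)\,n^{O(c_2)-1}$. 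On $nI_m''$ one has $1+j^{1/2}\gtrsim\max(1,\,m\sqrt{c_1\log n})$, the minimum being attained only near the left endpoints of the lowest-index intervals where $j$ can be as small as $0$, so $c_1 m(\log n)\,n^{O(c_2)-1}\le C(1+j^{1/2})\,n^{\varepsilon-1}$ once $c_2$ is small enough relative to $\varepsilon$, using $\sqrt{\log n}\,n^{O(c_2)-\varepsilon}\to 0$. The boundary value $j=0$ (which arises for $m\le 2$) is handled exactly: $g_{d,x_m}(0)=(-x_m)^d$, so $b_0=\psi_m(0)=P_m(0)=f(0)=0$, the last equality because \eqref{eq.approx_pointwise} at the left endpoint $x=0$ of $I_m''$ forces $P_m(0)=f(0)$.

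For the approximation bound \eqref{eq.approx_error_local}: for $x\in I_m'$,
\[
f(x)-\sum_{j\ge 0}b_j\,\bP(\Poi(nx)=j)=\big(f(x)-P_m(x)\big)+\sum_{j\notin nI_m''}\psi_m(j)\,\bP(\Poi(nx)=j),
\]
and the first term is $\lesssim\sqrt{x/(n\log n)}$ as above. The second (truncation) term is the heart of the argument. I would split it by $t:=\operatorname{dist}(j/n,I_m'')/\ell_m$. For $t$ below an absolute constant $T_0$ one still has $|j/n-x_m|\lesssim\ell_m$, hence $|\psi_m(j)|\lesssim\ell_m\,n^{O(c_2)}$, while $\sum_{j\notin nI_m''}\bP(\Poi(nx)=j)\le n^{-5}$ by the localization estimate \eqref{eq:local_intervals}; this part contributes $\lesssim n^{O(c_2)-5}\ll\sqrt{x/(n\log n)}$. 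For $t>T_0$, the Charlier bound gives $|\psi_m(j)|\lesssim\ell_m\,n^{O(c_2)}\,t^{D}$, a factor that is at most quasi-polynomial in $t$; on the other hand, since one interval length $\ell_m$ of deviation in $x$ corresponds to $n\ell_m\asymp c_1 m\log n\asymp\sqrt{\log n}\cdot\sqrt{nx_m}$, i.e.\ to $\asymp t\sqrt{\log n}$ standard deviations of $\Poi(nx)$, the Poisson point mass obeys $\bP(\Poi(nx)=j)\lesssim\exp(-c\,t^2\log n)$ for $t\lesssim m$ and decays even faster for $t\gtrsim m$. Because $t^{D}=e^{c_2(\log n)\log t}$, the product is $\lesssim\ell_m\,n^{O(c_2)}\exp(-\tfrac c2 t^2\log n)$ once $T_0$ is a large enough absolute constant, and summing over the at most $\lesssim n$ relevant indices $j$ still yields $o(\sqrt{x/(n\log n)})$. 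Combining the two parts proves \eqref{eq.approx_error_local}, with all constants depending only on $(c_1,c_2)$ and hence only on $\varepsilon$.

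The main obstacle is precisely this truncation step: zeroing $b_j$ outside $nI_m''$ breaks the exact identity $P_m=\sum_j\psi_m(j)\,\bP(\Poi(n\cdot)=j)$, and $\psi_m(j)$ genuinely blows up away from $nI_m''$ (a degree-$D$ polynomial behaves like a rescaled Chebyshev polynomial off its approximation interval). The reason it still works is the quantitative matching $n\ell_m\asymp\sqrt{\log n}\cdot(\text{standard deviation of }\Poi(nx_m))$, together with the choice $D=c_2\log n$ with $c_2$ small, which makes the Poisson decay outrun the polynomial growth. Everything else — the Jackson rescaling onto the $\sqrt{x/(n\log n)}$ scale, the coefficient bound via Lemma~\ref{lemma:coefficients}, and the $j=0$ boundary case — is routine once the local intervals $I_m\subseteq I_m'\subseteq I_m''$ and the localization estimates \eqref{eq:local_intervals} are in place.
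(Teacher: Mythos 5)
Your proof follows the paper's argument essentially step for step: Jackson approximation on a local interval (you use $I_m''$, the paper uses $I_m'$, which makes no essential difference), conversion to the Poisson/Charlier basis, truncation to $nI_m''$, coefficient control via the Charlier magnitude bound, and a Chernoff-vs-Chebyshev-growth estimate for the truncated tail. The only variations are cosmetic — your $t$-parametrization of the truncation error, the explicit renaming of $a_{m,0}$ to $f(x_m)$ (the paper just notes $a_0 = f(x_m) + O(m/n)$), and the (unnecessary, since it is already covered by the general bound) special treatment of $j=0$ — so this is the same proof.
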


It is clear that Lemma \ref{lemma.local_approximation} is an analog of Theorem \ref{thm.poisson_approx} on local intervals $I_m'$, where we additionally require that $b_j = 0$ for all $j\notin nI_m''$ to essentially shut down the influence on other local intervals, which will be crucial for the next step. The high-level idea of the proof of Lemma \ref{lemma.local_approximation} is as follows: we apply the best polynomial approximation with degree $O(\log n)$ on the local interval $I_m'$, and then convert the monomial basis into the Poisson polynomial basis. To fulfill the support condition of $(b_j)$ as well as the inequality \eqref{eq.coeff_bound_local}, we shall need a careful truncation argument to only keep the coefficients in the local interval $I_m''$. 

\begin{proof}[Proof of Lemma \ref{lemma.local_approximation}]
	Let $D=\lceil c_2\log n \rceil$ where $c_2>0$ is a small constant depending only on $\varepsilon$. Throughout the proof we will use $C_1,C_2,\cdots$ to denote positive numerical constants independent of $(n,\varepsilon,c_1)$. 
	
	By Lemma \ref{lemma:jackson}, there exists a degree-$D$ polynomial such that
	$$
	\left| f(x) -\sum_{d=0}^D a_d(x-x_m)^d \right| \le C_1\sqrt{\frac{x}{n\log n}} 
	$$
	holds for all $x\in I_m'$, with
	$$
	|a_d| \le \left(C_2\cdot \frac{c_1m\log n}{n} \right)^{1-d}, \quad d=1,2,\cdots,D.
	$$
	As for $d=0$, choosing $x=x_m$ in the above inequality gives $a_0 = f(x_m) + O(m/n)$. 
	
	Next we write the above polynomial as a linear combination of Poisson polynomials. Since \cite[Example 2.8]{Withers1987}
	$$
	\sum_{j=0}^\infty \frac{j!}{(j-d)!n^d}\cdot \bP(\Poi(nx)=j) = x^d,
	$$
	we have
	\begin{align*}
	\sum_{d=0}^D a_d(x-x_m)^d &= \sum_{d=0}^D a_k \sum_{d'=0}^d \binom{d}{d'}(-x_m)^{d-d'}x^{d'} \\
	&=  \sum_{d=0}^D a_d \sum_{d'=0}^d \binom{d}{d'}(-x_m)^{d-d'} \sum_{j=0}^\infty \frac{j!}{(j-d')!n^{d'}}\cdot \bP(\Poi(nx)=j) \\
	&= \sum_{j=0}^\infty \underbrace{\left( \sum_{d=0}^D a_d\sum_{d'=0}^d \binom{d}{d'}(-x_m)^{d-d'} \frac{j!}{(j-d')!n^{d'}} \right)}_{\triangleq b_j^\star} \bP(\Poi(nx)=j). 
	\end{align*}
	In other words, the inequality \eqref{eq.approx_error_local} holds for the coefficients $(b_j^\star)_{j=0}^\infty$. Now we define $(b_j)_{j=0}^\infty$ to be the truncated version of $(b_j^\star)_{j=0}^\infty$: 
	$$
	b_j = b_j^\star\cdot \1(j\in nI_m''). 
	$$
	Clearly $b_j = 0$ for all $j\notin nI_m''$. By Lemma \ref{lemma:charlier}, for $d=1,2,\cdots,D$, 
	\begin{align*}
	\left|\sum_{d'=0}^{d} \binom{d}{d'}(-x_m)^{d-d'}\frac{j!}{(j-d')!n^{d'}} \right| \le \begin{cases}
	\left(C_3\cdot \frac{c_1m\log n}{n}\right)^d &\text{if } j\in nI_m'', \\
	\left(C_4\cdot |j/n - x_m|\right)^d & \text{otherwise}. 
	\end{cases}
	\end{align*}
	Hence, for $j\in nI_m''$, we have
	\begin{align*}
	|b_j - a_0| = |b_j^\star - a_0| &\le \sum_{d=1}^D |a_d|\cdot \left|\sum_{d'=0}^{d} \binom{d}{d'}(-x_m)^{d-d'}\frac{j!}{(j-d')!n^{d'}} \right| \\
	&\le \sum_{d=1}^D \left(C_2\cdot \frac{c_1m\log n}{n}\right)^{1-d}\cdot \left(C_3\cdot \frac{c_1m\log n}{n}\right)^d \\
	&\le DC_2\left(1 + (C_3/C_2)^D\right)\cdot \frac{c_1 m\log n}{n} \\
	&=O\left( \frac{1+ j^{1/2}}{n^{1-\varepsilon}} \right)
	\end{align*}
	where the last step follows from the choice of $D=c_2\log n$ with a sufficiently small $c_2>0$ and the assumption $j\in nI_m''$. Moreover, for any $j\in nI_m''$,
	\begin{align*}
	a_0 = f(x_m) + O\left(\frac{m}{n}\right) &= f\left(\frac{j}{n}\right) + \left|x_m - \frac{j}{n}\right| + O\left(\frac{m}{n}\right) \\
	&= f\left(\frac{j}{n}\right) + O\left(\frac{m\log n}{n}\right) = f\left(\frac{j}{n}\right) + O\left(\frac{(1+j^{1/2})\log n}{n}\right),
	\end{align*}
	and therefore a triangle inequality gives the inequality \eqref{eq.coeff_bound_local}.
	
	As for the other inequality \eqref{eq.approx_error_local}, by triangle inequality it suffices to prove that
	\begin{align}\label{eq.truncation_error}
	\sum_{j\notin nI_m''} |b_j^\star|\cdot \bP(\Poi(nx)=j)  = O(n^{-4}), \quad \forall x\in I_m'. 
	\end{align}
	To prove \eqref{eq.truncation_error}, first note that for $j\notin nI_m''$, we have
	\begin{align*}
	|b_j^\star| &\le |a_0| + \sum_{d=1}^D \left(C_2\cdot \frac{c_1m\log n}{n}\right)^{1-d}\left(C_4\cdot \left|\frac{j}{n} - x_m\right|\right)^d \\
	&= O\left(1+ \left(C_5\cdot \frac{|j-nx_m|}{c_1m\log n}\right)^D \right) = O\left(1+ \left(C_6\cdot \frac{|j-nx_m|}{\sqrt{c_1nx_m\log n}}\right)^D \right). 
	\end{align*}
	Furthermore, by the Chernoff bound (cf. Lemma \ref{lemma:poissontail}), for all $x\in I_m'$ and $j\notin nI_m''$ we have
	\begin{align*}
	\bP(\Poi(nx) = j) &\le \exp\left( - C_7\left(\frac{(j-nx)^2}{nx} \wedge |j-nx|  \right)\right) \\
	&\le \exp\left( - C_8\cdot c_1\log n \cdot \frac{|j-nx_m|}{\sqrt{c_1nx_m\log n}} \right). 
	\end{align*}
	Moreover, the assumption $j\notin nI_m''$ implies that $|j-nx_m| / \sqrt{c_1nx_m\log n} \ge C_9>0$. Consequently, whenever $p\in I_m'$ and $j\notin nI_m''$, we have
	$$
	|b_j^\star|\cdot \bP(\Poi(np) = j) = O\left(n^{-5} + \exp\left( c_2\log n\cdot \log\left(C_6\cdot \frac{|j-nx_m|}{\sqrt{c_1nx_m\log n}}\right) -  C_8c_1\log n\cdot \frac{|j-nx_m|}{\sqrt{c_1nx_m\log n}}  \right) \right), 
	$$
	where the first $O(n^{-5})$ term follows from \eqref{eq:local_intervals}. Since the positive constants $C_6$ and $C_8$ do not depend on the parameter $c_1$, by choosing $c_1>0$ large enough we arrive at an exponent $\le -5\log n$, and therefore $|b_j^\star|\cdot \bP(\Poi(np) = j) = O(n^{-5})$. Now the inequality \eqref{eq.truncation_error} simply follows from the summation over $O(n)$ indices. 
\end{proof}

\subsubsection{Piecing together Local Polynomials}
Motivated by \eqref{eq:functional_no_sample_split}, we assume that $\sum_{j=0}^\infty b_j^{(m)}\bP(\Poi(nx/2)=j)$ is the Poisson polynomial given by Lemma \ref{lemma.local_approximation} on the $m$-th local interval $I_m'$, with $n$ replaced by $n/2$. Now consider the following Poisson polynomial: 
$$
F(x) \triangleq \sum_{j=0}^\infty b_j\bP(\Poi(nx) = j),
$$ 
with 
\begin{align}\label{eq.b_j}
b_j \triangleq \frac{1}{2^j}\sum_{m=1}^M \sum_{k\in nI_m/2}\binom{j}{k}b_{j-k}^{(m)}. 
\end{align}
We claim that the above polynomial with coefficients in \eqref{eq.b_j} satisfies Theorem \ref{thm.poisson_approx}. We first verify the inequality \eqref{eq.approx_error}. Using a change of variable $l = j-k$, we have
\begin{align*}
F(x) &= \sum_{j=0}^\infty \frac{1}{2^j}\sum_{m=1}^M\sum_{k\in nI_m/2} \binom{j}{k}b_{j-k}^{(m)}\cdot \bP(\Poi(nx)=j) \\
&= \sum_{m=1}^M \sum_{l=0}^\infty b_l^{(m)} \sum_{k\in nI_m/2} \frac{1}{2^{k+l}}\binom{k+l}{k}\cdot \bP(\Poi(nx)=k+l) \\
&= \sum_{m=1}^M \sum_{l=0}^\infty b_l^{(m)} \sum_{k\in nI_m/2} \frac{1}{2^{k+l}}\cdot e^{-nx}\frac{(nx)^{k+l}}{k!l!}\\
&= \sum_{m=1}^M \sum_{l=0}^\infty b_l^{(m)}\bP(\Poi(nx/2) = l) \sum_{k\in nI_m/2} \frac{1}{2^{k}}\cdot e^{-nx/2}\frac{(nx)^{k}}{k!} \\
&= \sum_{m=1}^M \bP(\Poi(nx/2) \in nI_m/2)\sum_{l=0}^\infty b_l^{(m)}\bP(\Poi(nx/2) = l) .
\end{align*}
Since $I_m$ constitutes a partition of $[0,1]$, for $x\in [0,1]$ there exists $m^\star \in [M]$ such that $p\in I_{m^\star}$. We distinguish into three cases: 
\begin{enumerate}
	\item Case I: $x\in I_{m^\star} - I_{m^\star-1}' - I_{m^\star+1}'$. By \eqref{eq:local_intervals}, we have $\bP(\Poi(nx/2)\notin nI_{m^\star}/2) \le n^{-5}$, and therefore $\bP(\Poi(nx/2)\in nI_m/2) \le n^{-5}$ for any $m\neq m^\star$. Hence, 
	\begin{align*}
	|f(x) - F(x)| &\le \left|f(x) - \sum_{l=0}^\infty b_l^{(m^\star)}\bP(\Poi(nx/2) = l)\right| \\
	&\qquad + \bP(\Poi(nx/2)\notin nI_{m^\star}/2)\cdot \left|\sum_{l=0}^\infty b_l^{(m^\star)}\bP(\Poi(nx/2) = l)\right| \\
	&\qquad + \sum_{m\neq m^\star} \bP(\Poi(nx/2)\in nI_m/2)  \left|\sum_{l=0}^\infty b_l^{(m)}\bP(\Poi(nx/2) = l)\right| \\
	&\le C\sqrt{\frac{x}{n\log n}} + n^{-5}\sum_{m=1}^M \frac{C}{n^{1/2-\varepsilon}},
	\end{align*}
	where we have used \eqref{eq.coeff_bound_local} in the last inequality. As a result, the desired approximation error in \eqref{eq.approx_error} holds if $x\ge 1/n$. For $x<1/n$, the failure probability may be strengthened from $O(n^{-5})$ to $O(n^{-4}x)$, still giving the desired bound.
	\item Case II: $x\in I_{m^\star} \cap I_{m^\star + 1}'$. In this case, \eqref{eq:local_intervals} gives $\bP(\Poi(nx/2)\in nI_m/2) \le n^{-5}$ for any $m\notin \{m^\star, m^\star+1\}$. Consequently, 
	\begin{align*}
	|f(x) - F(x)| &\le \bP(\Poi(nx/2)\in nI_{m^\star}/2)\left|f(x) - \sum_{l=0}^\infty b_l^{(m^\star)}\bP(\Poi(nx/2) = l)\right| \\
	&\qquad + \bP(\Poi(nx/2)\in nI_{m^\star+1}/2)\left|f(x) - \sum_{l=0}^\infty b_l^{(m^\star+1)}\bP(\Poi(nx/2) = l)\right|  \\
	&\qquad + \sum_{m\neq m^\star, m^\star+1} \bP(\Poi(nx/2)\in nI_m/2)  \left|\sum_{l=0}^\infty b_l^{(m)}\bP(\Poi(nx/2) = l)\right|,
	\end{align*}
	and using Lemma \ref{lemma.local_approximation} and the same concentration bounds gives \eqref{eq.approx_error}. 
	\item Case III: $x\in I_{m^\star} \cap I_{m^\star - 1}'$. This case is entirely symmetric to Case II. 
\end{enumerate}
Combining the above three cases, we arrive at the inequality \eqref{eq.approx_error}. 

Next we verify the coefficient bound \eqref{eq.coeff_bound}. By Lemma \ref{lemma.local_approximation}, it is clear from the definition that $b_j=0$ whenever $j\notin \cup_{m=1}^M nI_m''$. Fix any $j\ge 0$ such that $b_j\neq 0$, assume that $j\in nI_{m^\star}''$ (if there are multiple choices of $m^\star$, pick an arbitrary one). We claim that any other $m\in [M]$ such that $|m-m^\star|\ge 5$ do not contribute to $b_j$ in the summation \eqref{eq.b_j}. In fact, if there is a non-zero coefficient $b_{j-k}^{(m)}$ in \eqref{eq.b_j}, we must have
\begin{align*}
j \in nI_{m^\star}'' &= c_1\log n\cdot \left[\left(m^\star - 2\right)_+^2, (m^\star+1)^2 \right], \\
k \in nI_{m}/2 &= \frac{c_1\log n}{2}\cdot \left[\left(m-1\right)^2, m^2 \right], \\
j-k \in nI_{m}''/2 &= \frac{c_1\log n}{2}\cdot  \left[\left(m - 2\right)_+^2, (m+1)^2 \right]. 
\end{align*}
Summing up, we must have
\begin{align*}
(m^\star-2)_+^2 &\le \frac{(m-1)^2 + (m-2)_+^2}{2}, \\
(m^\star+1)^2 &\ge \frac{m^2 + (m+1)^2}{2},
\end{align*}
at least one of which will fail whenever $|m-m^\star|\ge 5$. Hence, 
\begin{align*}
|b_j| \le  \frac{1}{2^j}\sum_{m=1}^M \sum_{k\in nI_m/2}\binom{j}{k}|b_{j-k}^{(m)}| \le \max_{m: |m-m^\star|\le 4} \max_{l\ge 0} |b_l^{(m)}| \le \frac{C_0(1+m^\star)}{n^{1-\varepsilon}} \le \frac{C(1+j^{1/2})}{n^{1-\varepsilon}},
\end{align*}
establishing \eqref{eq.coeff_bound}.

\end{document}